\def\input@path{{figures/}}\makeatother
\newtheorem{theorem}{Theorem}
\newtheorem{proposition}[theorem]{Proposition}
\newtheorem{lemma}[theorem]{Lemma}
\newtheorem*{theorem*}{Theorem}
\theoremstyle{definition}
\newtheorem{remark}[theorem]{Remark}
\crefname{notation}{Notation}{Notations}
\crefname{problem}{Problem}{Problems}
\newcommand{\R}{\mathbb{R}} 
\renewcommand{\c}[1]{\mathcal{#1}} 
\renewcommand{\b}[1]{{\boldsymbol{#1}}} 
\newcommand{\f}[1]{\mathfrak{#1}} 
\newcommand{\set}[2]{\left\{ #1 \;\middle|\; #2 \right\}} 
\newcommand{\bigset}[2]{\big\{ #1 \;\big|\; #2 \big\}} 
\newcommand{\Bigset}[2]{\Big\{ #1 \;\Big|\; #2 \Big\}} 
\newcommand{\ssm}{\smallsetminus} 
\newcommand{\eqdef}{\mbox{\,\raisebox{0.2ex}{\scriptsize\ensuremath{\mathrm:}}\ensuremath{=}\,}} 
\renewcommand{\implies}{\Rightarrow} 
\DeclareMathOperator{\conv}{conv} 
\DeclareMathOperator{\inv}{inv} 
\newcommand{\ie}{\textit{i.e.}~} 
\definecolor{darkblue}{rgb}{0,0,0.7} 
\definecolor{green}{RGB}{57,181,74} 
\definecolor{violet}{RGB}{147,39,143} 
\newcommand{\darkblue}{\color{darkblue}} 
\newcommand{\defn}[1]{\textsl{\darkblue #1}} 
\newcommand{\OEIS}[1]{{\rm \href{http://oeis.org/#1}{\cite[\texttt{#1}]{OEIS}}}}
\newcommand{\Vincent}[1]{\todo[inline, size=\normalsize, color=blue!30]{\rm #1 \\ \hfill --- V.}}
\newcommand{\meet}{\wedge} 
\newcommand{\join}{\vee} 
\newcommandx{\projDown}[1][1={}]{\smash{\pi_\downarrow^{#1}}} 
\newcommandx{\projUp}[1][1={}]{\smash{\pi^\uparrow_{#1}}} 
\newcommandx{\Fan}[1][1=D]{\mathcal{F}_{#1}} 
\newcommand{\polytope}[1]{\mathds{#1}} 
\newcommand{\Perm}{\polytope{P}} 
\newcommand{\Asso}{\polytope{A}} 
\newcommand{\WRP}{\polytope{WR}} 
\newcommand{\SRP}{\polytope{SR}} 
\newcommand{\SP}{\polytope{SP}}
\newcommand{\horizontalPattern}{\smash{\raisebox{-.15cm}{\includegraphics[scale=.5]{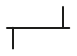}}}} 
\newcommand{\verticalPattern}{\smash{\raisebox{-.25cm}{\includegraphics[scale=.5]{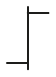}}}} 
\newcommand{\tri}{\lhd}
\newcommand{\btri}{\LHD}
\newcommand{\oset}[3][0ex]{%
  \mathrel{\mathop{#3}\limits^{
    \vbox to#1{\kern-2\ex@
    \hbox{$\scriptstyle#2$}\vss}}}}
\newcommand{\uset}[3][0ex]{%
  \mathrel{\mathop{#3}\limits_{
    \vbox to#1{\kern-7\ex@
    \hbox{$\scriptstyle#2$}\vss}}}}
\newcommand{\loday}[1]{\smash{\overset{\frown}{#1}}}
\newcommand{\antiloday}[1]{\smash{\overset{\smile}{#1}}}
\newcommand{\upArc}[1]{\smash{\raisebox{.05cm}{$\uset[0ex]{#1}{\frown}$}}}
\newcommand{\downArc}[1]{\smash{\raisebox{-.05cm}{$\oset[.2ex]{#1}{\smile}$}}}
\newcommand{\yin}[1]{\smash{\overset{\sim}{#1}}}
\newcommand{\yang}[1]{\smash{\overset{\backsim}{#1}}}
\newcommand{\yinArc}[2]{\smash{\raisebox{.05cm}{$\uset[0ex]{#1}{\frown}$}\hspace{-.3ex}\raisebox{-.03cm}{$\oset[.2ex]{#2}{\smile}$}}}
\newcommand{\yangArc}[2]{\smash{\raisebox{-.03cm}{$\oset[.2ex]{#1}{\smile}$}\hspace{-.3ex}\raisebox{.05cm}{$\uset[0ex]{#2}{\frown}$}}}
\newcommand{\weakeq}{\asymp}
\newcommand{\strongeq}{\mathbin{\smash{\begin{smallmatrix} \backsim \\[-.3cm] \sim \end{smallmatrix}}}}
\def\l@part{\@tocline{1}{8pt}{0pc}{}{}}
\def\l@section{\@tocline{1}{4pt}{0pc}{}{}}
\let\oldtocpart=\tocpart
\renewcommand{\tocpart}[2]{\sc\large\oldtocpart{#1}{#2}}
\let\oldtocsection=\tocsection
\renewcommand{\tocsection}[2]{\bf\oldtocsection{#1}{#2}}
\let\oldtocsubsubsection=\tocsubsubsection
\renewcommand{\tocsubsubsection}[2]{\quad\oldtocsubsubsection{#1}{#2}}
\title{Rectangulotopes}
\thanks{VP was partially supported by the Spanish grant PID2022-137283NB-C21 of MCIN/AEI/10.13039/501100011033 / FEDER, UE, by Departament de Recerca i Universitats de la Generalitat de Catalunya (2021 SGR 00697), by the French grant CHARMS (ANR-19-CE40-0017), and by the French--Austrian project PAGCAP (ANR-21-CE48-0020 \& FWF I 5788).}
\author{Jean Cardinal}
\address{Université libre de Bruxelles (ULB)}
\email{jean.cardinal@ulb.be}
\urladdr{\url{https://jean.cardinal.web.ulb.be}}
\author{Vincent Pilaud}
\address{Universitat de Barcelona (UB)}
\email{vincent.pilaud@ub.edu}
\urladdr{\url{https://www.ub.edu/comb/vincentpilaud/}}
\begin{document}

\begin{abstract}
  Rectangulations are decompositions of a square into finitely many axis-aligned rectangles.
  We describe realizations of $(n-1)$-dimensional polytopes associated with two combinatorial families of rectangulations composed of $n$ rectangles.
  They are defined as quotientopes of natural lattice congruences on the weak Bruhat order on permutations in $\f{S}_n$, and their skeleta are flip graphs on rectangulations.
  We give simple vertex and facet descriptions of these polytopes, in particular elementary formulas for computing the coordinates of the vertex corresponding to each rectangulation, in the spirit of J.-L.~Loday's realization of the associahedron.
\end{abstract}

\maketitle

\tableofcontents


\section{Introduction}
\label{sec:intro}


\subsection{Polytopes of permutations, triangulations, and rectangulations}
\label{subsec:permTrianRect}

The encoding of combinatorial objects in the form of a polyhedral structure is a recurrent theme in geometric and algebraic combinatorics.
An elementary yet striking example of this idea is the \defn{permutahedron} $\Perm (n)$.
It is the $(n-1)$-dimensional polytope defined either as the convex hull of the points $\sum_{i\in [n]} \sigma^{-1}_i\cdot \b{e}_i$ for all permutations $\sigma \in \f{S}_n$, or as the intersection of the hyperplane of~$\R^n$ given by~$\sum_{i \in [n]} x_i = \smash{\binom{n+1}{2}}$ with the halfspaces given by~$\sum_{i \in I} x_i \ge \smash{\binom{|I|+1}{2}}$ for all nonempty proper subsets~$\varnothing \ne I \subsetneq [n]$.
Its skeleton is the Cayley graph of the symmetric group for the generators consisting of adjacent transpositions.
See \cref{fig:quotientopes}\,(left) for an illustration when~$n = 4$.
The many generalizations and extensions of the permutahedra gave rise to a flourishing theory of \defn{deformed permutahedra}~\cite{Postnikov,MR2520477,MR4064768,MR4651496} (also called \defn{generalized permutahedra}, or \defn{polymatroids}).

Among those, the \defn{associahedron} is a classical and ubiquitous polytope, first defined by D.~Tama\-ri~\cite{T51} and by J.~Stasheff~\cite{S63} in a topological context.
It has since then been identified as a fundamental object in many other areas of mathematics, including operads, cluster algebras, combinatorial Hopf algebras, and physics (see for instance the recent survey from V. Pilaud, F. Santos and G. Ziegler~\cite{PilaudSantosZiegler} and the references therein).
It was first thought of as a purely combinatorial object, before various families of geometric realizations were shown to exist~\cite{MR1022776,MR1941227,MR2108555,MR3437894,MR2321739}.
The face lattice of the $(n-1)$-dimensional associahedron is the reverse inclusion poset of non-intersecting diagonals of a convex $(n+3)$-gon.
In particular, its skeleton is the \defn{flip graph} on \defn{triangulations} of the $(n+3)$-gon, or equivalently -- via a standard Catalan bijection -- the \defn{rotation graph} on \defn{binary trees} with $n$ internal nodes, the structure of which has been the subject of many investigations, with applications in computer science~\cite{MR928904,MR3197650}.
In~2004, J.-L.~Loday published the following elegant description of the associahedron~\cite{MR2108555}, giving a recipe for computing the coordinates of a vertex corresponding to a given binary tree.
We note that the inequality description of the same polytope was actually provided in~1993 by S.~Shnider and S.~Sternberg~\cite{ShniderSternberg}, but Loday's vertex description largely popularized this realization.
See \cref{fig:quotientopes}\,(right) for an illustration when~$n = 4$.

\begin{theorem}
  \label{thm:loday}
    The associahedron $\Asso (n)$ is realized by:
  \begin{enumerate}[(i)]
  \item the convex hull of the points
    \[
    \sum_{i\in [n]} \ell^T_i\cdot r^T_i \cdot \b{e}_i
    \]
    for all binary trees~$T$ with $n$ internal nodes, where $\ell^T_i$ and $r^T_i$ denote the number of leaves in the left and right subtrees of~$i$ in $T$ (labeled in inorder), see~\cite{MR2108555},
  \item the intersection of the hyperplane defined by $\sum_{i \in [n]} x_i = \binom{n+1}{2}$ with the halfspaces defined by
    \[
    \sum_{i \in I} x_i \le \#\set{J \text{ interval of } [n]}{I \cap J \ne \varnothing}
    \]
    for all intervals~$\varnothing \ne I \subsetneq [n]$, see~\cite{ShniderSternberg}.
  \end{enumerate}
\end{theorem}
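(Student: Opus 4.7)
The plan is to establish the containment (i) $\subseteq$ (ii) by direct inequality verification, and then to match vertices via a normal-cone analysis.

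First I verify the hyperplane equation. Label the $n+1$ leaves of $T$ by $0, 1, \ldots, n$ in inorder, so that every internal node $i$ has left subtree containing the leaves $\{p_i - 1, \ldots, i-1\}$ and right subtree containing $\{i, \ldots, q_i\}$ for suitable $p_i \le i \le q_i$. Then $\ell^T_i \cdot r^T_i$ counts exactly the pairs of leaves of $T$ whose lowest common ancestor is $i$, and summing over all internal nodes partitions the $\binom{n+1}{2}$ pairs of leaves, giving $\sum_i \ell^T_i r^T_i = \binom{n+1}{2}$ and placing $v_T$ in the hyperplane. The same interpretation identifies $\sum_{i \in I} \ell^T_i r^T_i$, for $I = [a, b]$, with the count of leaf pairs whose lca lies in $[a, b]$. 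A leaf pair $u < v$ with lca $i$ satisfies $u < i \le v$, so lca $\in [a, b]$ forces $u \le b - 1$ and $v \ge a$, whence
\[
\sum_{i \in I} \ell^T_i r^T_i \;\le\; \#\set{(u, v)}{0 \le u < v \le n,\; u \le b-1,\; v \ge a} \;=\; \binom{n+1}{2} - \binom{a}{2} - \binom{n-b+1}{2}.
\]
The right-hand side matches the number of intervals of $[n]$ meeting $[a, b]$ (all intervals, minus the $\binom{a}{2}$ contained in $[1, a-1]$ and the $\binom{n-b+1}{2}$ contained in $[b+1, n]$), which establishes the inequality.

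For the reverse inclusion, I would show that each Loday point $v_T$ is a vertex of (ii) whose outer normal cone matches the sylvester cone of $T$. Concretely, for each non-root internal node $c$ of $T$, there is a canonical interval $I_c \subseteq [n]$ whose inequality in (ii) is tight at $v_T$; I would verify that the $n - 1$ indicator vectors $\b{1}_{I_c}$ are linearly independent modulo $\b{1}_{[n]}$, so that $v_T$ is indeed a vertex. Since the Loday coordinates determine $T$, the points $v_T$ are pairwise distinct; and since their normal cones tile the hyperplane transversal to $\b{1}_{[n]}$ (this is the content of the sylvester fan being complete and a coarsening of the braid fan), the $v_T$ exhaust the vertices of (ii).

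The main obstacle lies in this last step, specifically in selecting the correct family of $n - 1$ tight intervals at each $v_T$. The pentagon case ($n = 3$) already shows that more than $n - 1$ interval inequalities may simultaneously be tight at a Loday vertex (for the balanced tree on three internal nodes, three inequalities are tight), so one must carefully identify a canonical transversal family — plausibly indexed by the non-root internal nodes of $T$, with each interval reflecting the node's position relative to its parent in the inorder traversal. Once this combinatorial identification is pinned down, linear independence, distinctness of the $v_T$, and the tiling property of the sylvester fan are routine to check.
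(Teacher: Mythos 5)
Your first half, verifying that every Loday point lies in the polytope of part~(ii), is correct, and the lca counting is exactly the right bookkeeping. Note that the very same argument establishes $\sum_{i\in S}\ell^T_ir^T_i\le\#\set{J\text{ interval of }[n]}{J\cap S\ne\varnothing}$ for \emph{every} nonempty $S\subseteq[n]$, not only for intervals: a leaf pair $(u,v)$ with lca in $S$ always yields an interval $J=[u{+}1,v]$ meeting $S$, since $u<\mathrm{lca}(u,v)\le v$.

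The half you leave open cannot be closed for the statement as literally written, because the halfspaces in~(ii) have to be indexed by all nonempty proper subsets of $[n]$ (irredundantly, by the complements of intervals), not by intervals alone. Your pentagon computation, pushed one step further, reveals this. For $n=3$, the five interval inequalities $x_1\le 3$, $x_2\le 4$, $x_3\le 3$, $x_1+x_2\le 5$, $x_2+x_3\le 5$ cut out, inside the hyperplane $x_1+x_2+x_3=6$, the square $1\le x_1,x_3\le 3$, which has the spurious vertex $(3,0,3)$ --- not a Loday point, since every Loday coordinate is at least~$1$. The missing facet $x_1+x_3\le 5=\#\set{J}{J\cap\{1,3\}\ne\varnothing}$ has index $\{1,3\}=[3]\ssm\{2\}$, which is not an interval. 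You can also see the problem at the Loday point $(3,1,2)$: the only tight \emph{interval} inequality there is $x_1\le 3$, so your plan of producing $n-1=2$ independent tight intervals at that vertex already fails. In the upper-bound form, the facets of $\Asso(n)$ are indexed by complements of intervals; this is equivalent, via $\sum x_i=\binom{n+1}{2}$, to Shnider--Sternberg's lower-bound form $\sum_{a\le i\le b}x_i\ge\binom{b-a+2}{2}$.

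Once the indexing is corrected (as in the paper's own \cref{thm:weakRectangulotope}, which ranges over subsets), the gap you flag has a clean answer, and your instinct about non-root nodes is right up to complementation. For each non-root node $c$ of $T$ let $I_c$ be the interval of inorder labels of the subtree rooted at~$c$, and set $S_c\eqdef[n]\ssm I_c$. The constraint for $S_c$ is tight at $v_T$: using $\sum x_i=\binom{n+1}{2}$, this amounts to $\sum_{i\in I_c}\ell^T_ir^T_i=\binom{|I_c|+1}{2}$, which is your leaf-pair partition applied to the subtree at~$c$. Moreover, for any $\b{y}$ in the sylvester cone $\polytope{C}^T$, the observation that $u\in S_c$ exactly when $c$ does not lie on the path from $u$ to the root gives, by telescoping along these paths, the decomposition $\b{y}=\sum_{c\ne\mathrm{root}}(y_{\mathrm{parent}(c)}-y_c)\,\b{1}_{S_c}+\mu\,\b{1}_{[n]}$ for a suitable $\mu\in\R$, with every coefficient $y_{\mathrm{parent}(c)}-y_c\ge 0$. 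Complementary slackness then shows $v_T$ maximizes $\b{y}$ over the corrected~(ii); since the sylvester cones cover $\R^n$, the Loday points exhaust the vertices.
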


A natural object associated with the associahedron is the \defn{Tamari lattice}, whose cover graph is isomorphic to the skeleton of the associahedron~\cite{MR0146227,MR3235205}.
The Tamari lattice is known to be the quotient of the weak Bruhat order by a \defn{lattice congruence} called the \defn{sylvester congruence}~\cite{MR1654173,MR2142078}.
Answering a question of N.~Reading~\cite{MR2142177}, V.~Pilaud and F.~Santos~\cite{MR3964495} proved that with \emph{every} lattice congruence of the weak Bruhat order, one can associate a polytope whose skeleton is the cover graph of the lattice quotient, and more precisely whose normal fan is the \defn{quotient fan} defined by gluing the cones of the \defn{braid fan} (the type $A$ Coxeter arrangement) that belong to the same congruence class.
Those polytopes are deformed permutahedra that they called \defn{quotientopes}.
The associahedron is therefore the quotientope of the sylvester congruence of the weak Bruhat order whose classes are in bijection with triangulations.

The goal of this paper is to provide explicit geometric realizations of the quotientopes for two particular congruences of the weak Bruhat order whose classes are in bijection with equivalence classes of rectangulations, defined as decompositions of a square into rectangles.

\begin{figure}
	\capstart
	\centerline{\includegraphics[scale=.75]{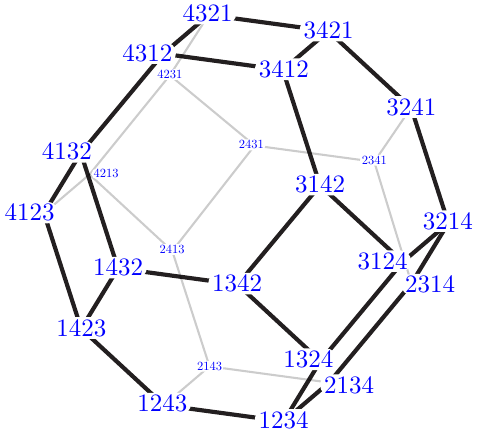} \quad \raisebox{-.5cm}{\includegraphics[scale=.75]{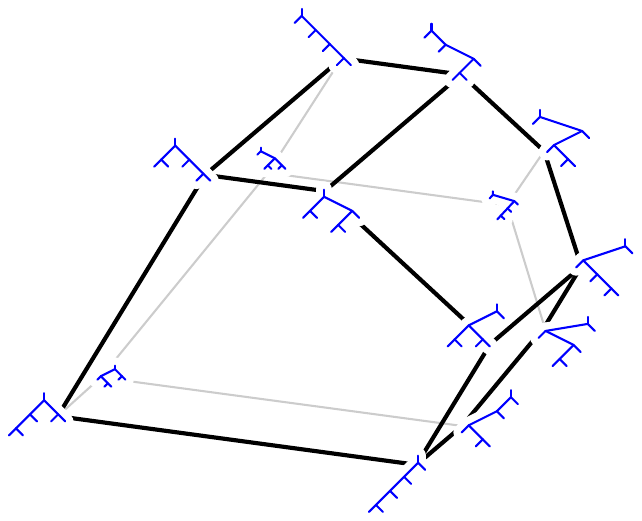}}}
	\vspace{-.2cm}
	\caption{The permutahedron~$\Perm(4)$ (left) and the associahedron~$\Asso(4)$ (right). \cite[Fig.~1]{MR3964495} \&~\cite[Fig.~5]{MR4584712}}
	\label{fig:quotientopes}
	\vspace{-.4cm}
\end{figure}


\subsection{Weak and strong rectangulations}

\enlargethispage{.3cm}
The combinatorics of rectangulations has been studied for nearly two decades~\cite{MR2233287,MR2763051,MR2871762,MR2864445,MR3084577,MR3192492,MR3878132,MR4598046,MR4667583,AB24,ACFF24}.
In fact, several combinatorial ideas can even be traced back to the problem of ``squaring the square'' studied since the 1930s~\cite{A30,Abe32,MR0000470,MR0003040}. Also, some results were initially motivated by applications to the design of \defn{floorplans} for very large scale integrated circuits, and published in electrical engineering journals, see for instance Z. C. Shen and C. C. N. Chu~\cite{SC03}, and R. Fujimaki, Y. Inoue, and T. Takahashi~\cite{FT07,TF08,ITF09,FIT09}.

We define a \defn{rectangulation} of size $n$ as a decomposition of the square into $n$ axis-parallel rectangles with disjoint interiors.
The \defn{segments} of a rectangulation are the inclusionwise maximal line segments composed of edges of the rectangles, excluding the edges of the decomposed square.
We suppose throughout that the rectangulations are \defn{generic}, in the sense that no four rectangles have a common vertex.

In order to focus on the combinatorial structure of a rectangulation, we recall two equivalence relations between rectangulations (see for instance~\cite{ACFF24} and references therein).
We say that a rectangle~$r$ is \defn{above} another rectangle~$s$ (and~$s$ is \defn{below}~$r$) if there is a sequence~${r = r_0, \dots, r_k = s}$ of rectangles such that the bottom edge of~$r_{i-1}$ lies in the same segment as the top edge of~$r_i$ for all~$i \in [k]$.
We define \defn{on the left of} and \defn{on the right of} similarly.
Two rectangulations are said to be \defn{weakly equivalent} if there exists a bijection between their rectangles that preserves the \mbox{above--below} and left--right relations between the rectangles.
Weak equivalence classes of rectangulations will be referred to as \defn{weak rectangulations}.
(Note that this is a slight, yet convenient abuse of terminology, as weak rectangulations are really sets of rectangulations.)
On the other hand, two rectangulations are said to be \defn{strongly equivalent} when there exists a bijection between their rectangles that not only preserves the above--below and left--right relations, but also the adjacency relation between rectangles.
We will naturally refer to strong equivalence classes of rectangulations as \defn{strong rectangulations}.

In order to make sense of these definitions, it is useful to consider \defn{wall slides} in a rectangulation: the local changes consisting of shifting a horizontal segment vertically, or a vertical segment horizontally, while extending or shortening the incident segments accordingly.
Performing a wall slide in a rectangulation $R$ leads to a rectangulation that is always weakly equivalent to $R$.
However, if the wall slide changes the adjacency relation between the rectangles, then the resulting rectangulation is not strongly equivalent to $R$ anymore.
A \defn{diagonal rectangulation} is a rectangulation in which every rectangle intersects the top--left to bottom--right diagonal of the square.
Examples of rectangulations are given in \cref{fig:rectequiv}.
It is simple to check that every weak rectangulation has a diagonal representative, hence weak rectangulations can also be thought of as diagonal rectangulations.

\begin{figure}
  \includegraphics[width=\textwidth]{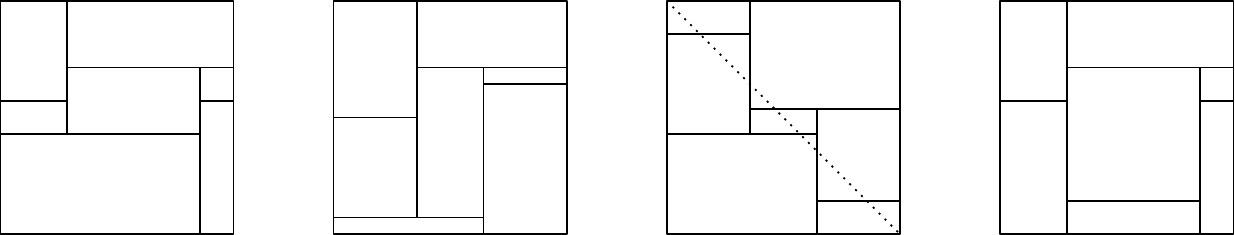}
	\caption{Four rectangulations. The rightmost rectangulation is not weakly equivalent to any other. The leftmost three are weakly equivalent, but only the first two are strongly equivalent. Only the third one is diagonal.}
	\label{fig:rectequiv}
\end{figure}


\subsection{Weak and strong rectangulotopes}

Weak and strong rectangulations of size $n$ have been shown to define congruences of the weak Bruhat order on $\f{S}_n$.

The \defn{weak rectangulation congruence}~$\weakeq$ (or \defn{Baxter congruence}) was first explicitly studied by S.~Law and N.~Reading~\cite{MR2871762} and revisited in~\cite[Thm.~1.1 \& Exm.~4.10]{Reading-arcDiagrams} in terms of arc diagrams.
Its classes are in bijection with weak rectangulations, but also with \defn{Baxter permutations}~\cite{MR0491652,MR0555815}, with \defn{twin binary trees}~\cite{MR1417289,MR2914637}, and with many other combinatorial families~\cite{MR2763051}. 
The corresponding quotientopes will be referred to as the \defn{weak rectangulotopes} and denoted by $\WRP(n)$.
We note that weak rectangulotopes were already constructed as Minkowski sums of two opposite associahedra in~\cite{MR2871762}, even before the general constructions of quotientopes of~\cite{MR3964495,MR4584712}.
For~$n = 4$, the weak rectangulation lattice is represented in \cref{fig:weakRectangulationLattice} and the weak rectangulotope is represented in \cref{fig:weakRectangulotope}.

The \defn{strong rectangulation congruence}~$\strongeq$ was studied by N.~Reading~\cite{MR2864445}, revisited~in~\cite[Thm.~1.2 \& Exm.~4.11]{Reading-arcDiagrams} in terms of arc diagrams, and studied more recently by E.~Meehan~\cite{MR3697823}, and A.~Asinowski, J.~Cardinal, S.~Felsner, and É.~Fusy~\cite{ACFF24}.
Its classes are in bijection with strong rectangulations.
The corresponding quotientopes will be referred to as the \defn{strong rectangulotopes} and denoted by~$\SRP(n)$.
They can be obtained from the constructions of~\cite{MR3964495,MR4584712}.
For~$n = 4$, the strong rectangulation lattice is represented in \cref{fig:strongRectangulationLattice} and the strong rectangulotope is represented in~\cref{fig:strongRectangulotope}.
Note that, contrarily to what could suggest this $n = 4$ case, the strong rectangulation lattice is not isomorphic to the weak order, and the strong rectangulotope is not combinatorially equivalent to the permutahedron as soon as~$n > 4$.

\begin{figure}
	\centerline{\includegraphics[scale=1.05]{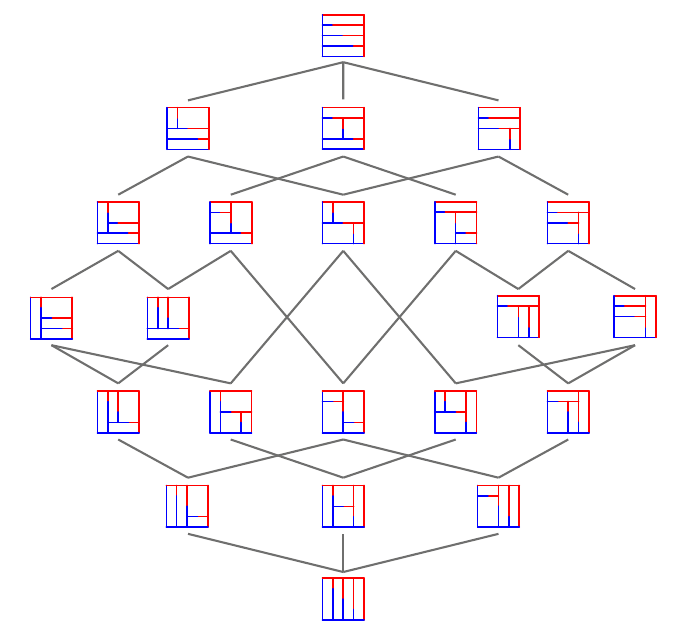}}
	\caption{The weak rectangulation lattice.}
	\label{fig:weakRectangulationLattice}
\end{figure}
\begin{figure}
	\centerline{\includegraphics[scale=1.05]{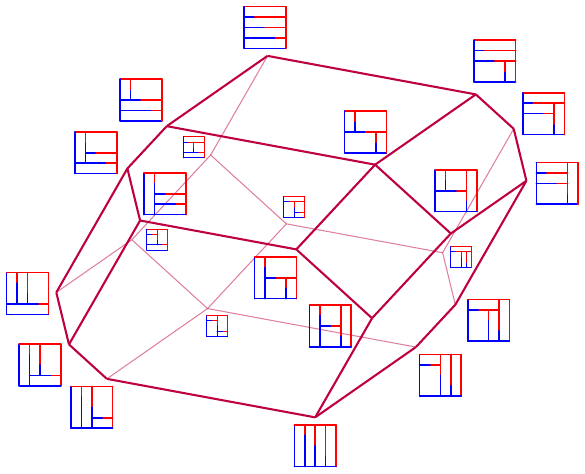}}
	\caption{The weak rectangulotope~$\WRP(4)$.}
        \label{fig:weakRectangulotope}
\end{figure}
\begin{figure}
	\centerline{\includegraphics[scale=1.05]{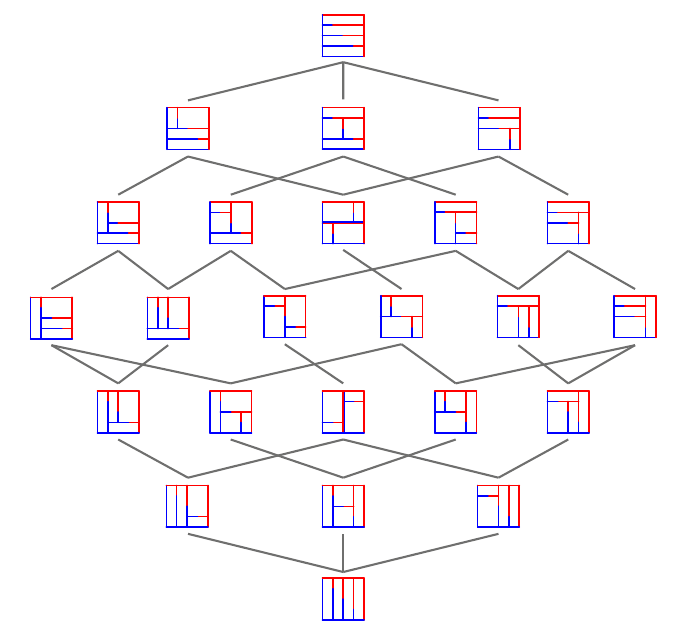}}
	\caption{The strong rectangulation lattice.}
	\label{fig:strongRectangulationLattice}
\end{figure}
\begin{figure}
	\centerline{\includegraphics[scale=1.05]{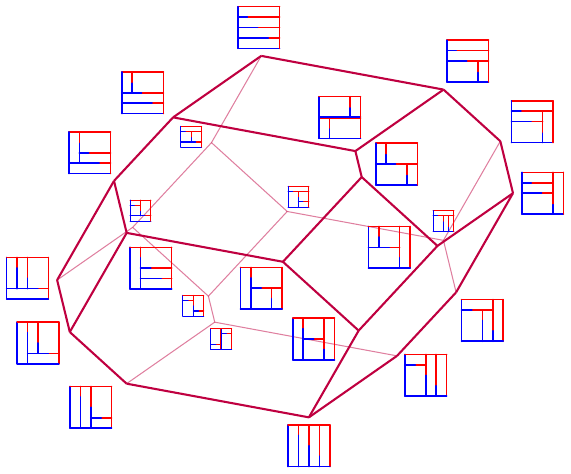}}
	\caption{The strong rectangulotope~$\SRP(4)$.}
        \label{fig:strongRectangulotope}
\end{figure}

An important motivation for studying these quotientopes is the informative structure of their skeleta, which are \defn{flip graphs} on the rectangulations.
A \defn{flip} is a local, reversible change in the structure of the rectangulation.
Flips in rectangulations come in three flavors:
\begin{itemize}
\item \defn{simple flips} replace a vertical segment incident to exactly two rectangles by a horizontal segment, and vice-versa,
\item \defn{pivoting flips} transform a pair of adjacent rectangles whose union is L-shaped, changing a left--right pair into an above--below pair, and vice-versa,
\item \defn{wall slide flips} are wall slides that modify the adjacency of the rectangles, removing exactly one adjacency and adding exactly one. These are only relevant in strong rectangulations.
\end{itemize}
The various types of flips are illustrated in \cref{fig:flips}.
Those flips define the flip graph on strong rectangulations.
We refer to \cite{MR2871762,MR3878132,MR3697823} for a detailed description of the flip graphs on weak rectangulations.
Just like associahedra encode the rotation graphs on binary trees, the rectangulotopes directly yield flip graphs on rectangulations.
Namely, the skeleton of $\WRP(n)$ (resp.~of $\SRP(n)$) is isomorphic to the flip graph on weak (resp.~strong) rectangulations of size~$n$.

\begin{figure}
  \begin{center}
  \begin{subfigure}{0.3\textwidth}
    \begin{center}
    \includegraphics[page=2, scale=.5]{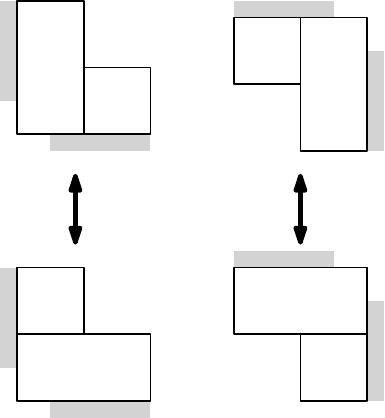}
    \caption{Simple flips.}
    \end{center}
    \end{subfigure}
  \begin{subfigure}{0.3\textwidth}
    \begin{center}
    \includegraphics[page=1, scale=.5]{flipGraph.pdf}
    \caption{Pivoting flips.}
    \end{center}
  \end{subfigure}
    \begin{subfigure}{0.3\textwidth}
    \begin{center}
    \includegraphics[page=3, scale=.5]{flipGraph.pdf}
    \caption{\label{fig:wallSlides}Wall slide flips.}
    \end{center}
    \end{subfigure}
    \end{center}
	\caption{The three types of flips between strong rectangulations. The flips are possible only if the grey areas do not intersect any segment of the rectangulation. \cite[Fig.~19]{ACFF24}}
	\label{fig:flips}
\end{figure}

\begin{figure}
	\centerline{\includegraphics[width=.4\textwidth]{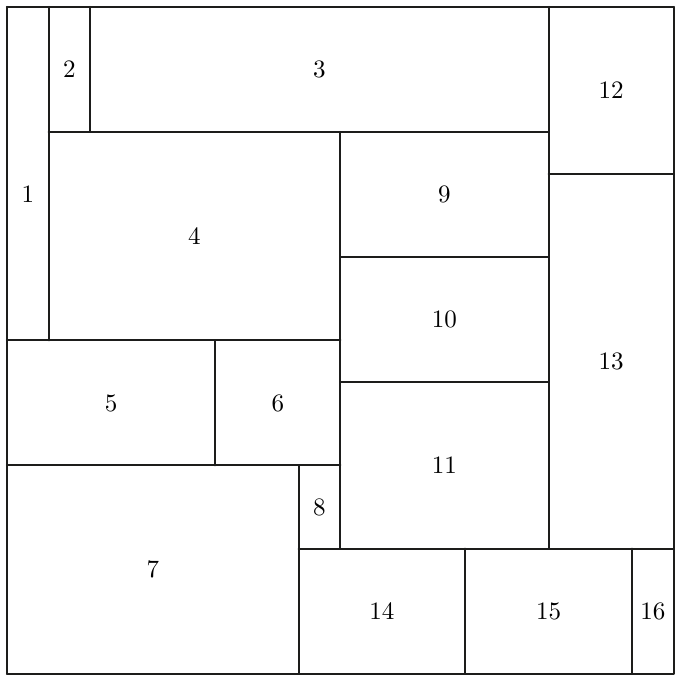} \qquad \includegraphics[width=.4\textwidth]{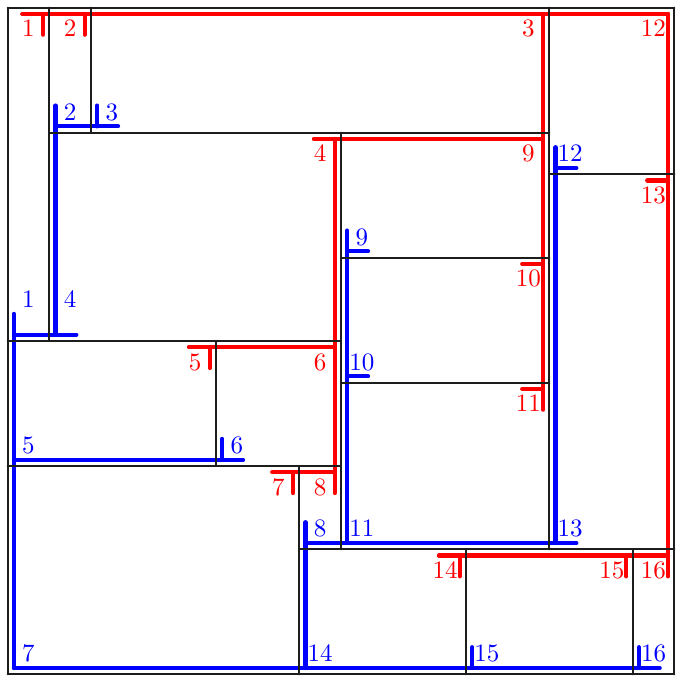}}
	\caption{A rectangulation $R$ (left) and its pair of binary trees $S(R)$ and $T(R)$ (right). Example from \cite{ACFF24}.}
        \label{fig:strongRectangulation}
\end{figure}

\begin{figure}
	\centerline{\includegraphics[width=.4\textwidth]{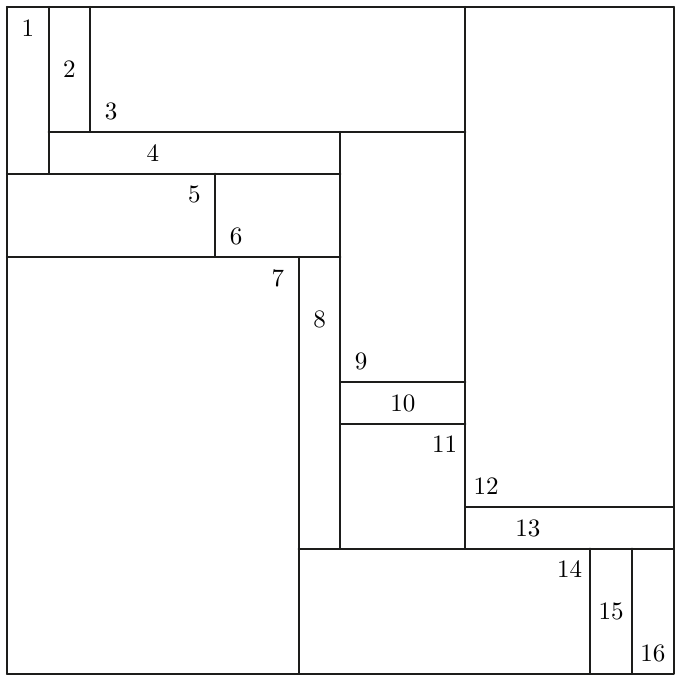} \qquad \includegraphics[width=.4\textwidth]{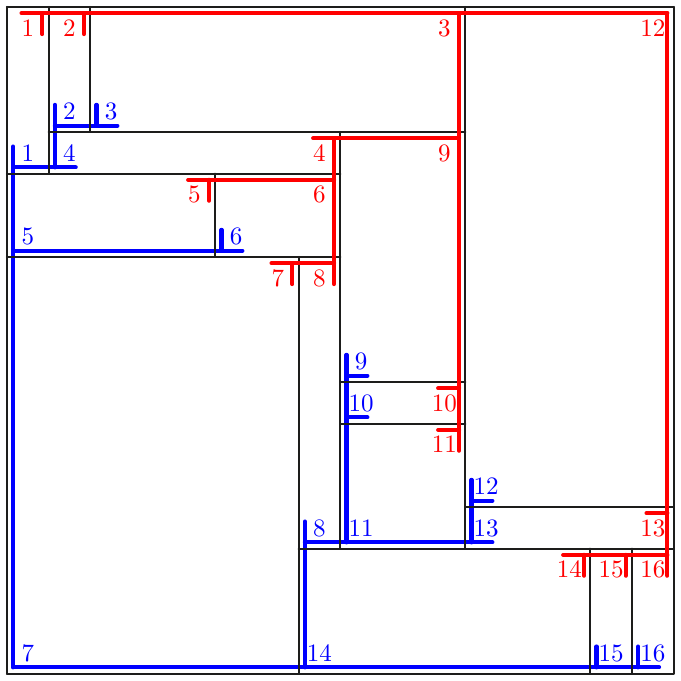}}
	\caption{A diagonal rectangulation (left) and its pair of twin binary trees (right). Example from \cite{ACFF24}.}
	\label{fig:weakRectangulation}
\end{figure}

Rectangulotopes enrich the family of natural quotientopes, alongside associahedra, permutreehedra~\cite{MR3856522}, certain brick polytopes~\cite{PilaudSantos-brickPolytopes, PilaudStump-brickPolytopes, Pilaud-brickAlgebra}, and certain graphical zonotopes~\cite{Pilaud-brickAlgebra, Pilaud-acyclicReorientationLattices}, providing instructive examples of application of the theory of lattice congruences to concrete combinatorial objects.
Our main results are elementary vertex and facet descriptions of both families of quotientopes.
For vertices, we describe the coordinates of the vertex corresponding to a weak or strong rectangulation with simple formulas in the spirit of J.-L.~Loday's realization in~\cref{thm:loday}\,(i).
As mentioned in~\cite{PilaudSantosZiegler}, there is no known such formula for arbitrary quotientopes, which makes rectangulotopes join the restricted club of Loday type quotientopes, together with associahedra and permutreehedra~\cite{MR3856522}.
For facets, we show that both weak and strong rectangulotopes have $2^n-2$ facets (the maximal possible number of facets of a deformed permutahedron), and provide simple formulas for the right hand side of the inequality corresponding to a given proper nonempty subset of~$[n]$.
We now give the necessary definitions for precisely stating these vertex and facet descriptions of the weak and strong rectangulotopes.


\subsection{Source and target trees of a rectangulation}
\label{subsec:sourceTargetTrees}

Fix a rectangulation~$R$ of size~$n$, and consider the directed graph~$D(R)$ whose vertices are all vertices of~$R$, and whose edges are obtained as follows.
For each rectangle~$r$ of~$R$, we include in~$D(R)$ four edges joining the vertices of~$r$, where the horizontal edges are oriented from left to right, and the vertical edges are oriented from bottom to top.
Hence, in the rectangle~$r$, its bottom--left corner is its source~$s(r)$, while its top--right corner is its target~$t(r)$.
Note that since~$R$ is generic, the sources~$\set{s(r)}{r \in R}$ and the targets~$\set{t(r)}{r \in R}$ are disjoint.
The \defn{source tree}~$S(R)$ (resp.~\defn{target tree}~$T(R)$) is the subgraph of~$D(R)$ induced by the sources~$\set{s(r)}{r \in R}$ (resp.~by the targets~$\set{t(r)}{r \in R}$).
It is not difficult to check that the source tree~$S(R)$ (resp.~target tree~$T(R)$) is a binary tree, rooted at the bottom--left (resp.~top--right) corner of~$R$, and oriented from (resp.~towards) its root.
Observe also that $S(R)$ and~$T(R)$ only depend on the weak class of the rectangulation~$R$.

We complete each node of~$S(R)$ and~$T(R)$ with a vertical (resp.~horizontal) leaf if it has no vertical (resp.~horizontal) child.
An example is given in~\cref{fig:strongRectangulation}.
If the rectangulation $R$ is diagonal, we retrieve the well-studied pair of twin binary trees~\cite{MR1417289,MR2914637}, see~\cref{fig:weakRectangulation}.

Recall that the \defn{inorder labeling} of a binary tree~$T$ is the labeling of the nodes of~$T$ such that the label of each node~$t$ of~$T$ is larger than all labels in the left subtree of~$t$ and smaller than all labels in the right subtree of~$t$.
For any rectangle~$r$ of~$R$, the inorder label of the source~$s(r)$ in~$S(R)$ coincides with the inorder label of the target~$t(r)$ in~$T(R)$.
The inorder can also be retrieved from the above--below and left--right relationships defined before, as the transitive closure of the union of the below and right partial orders.
This enables to unambiguously label the rectangles of~$R$ by the inorder.
The resulting labeling coincides with the NW--SE labeling of~\cite{ACFF24}.
From now on, the labels of the rectangles in a rectangulation are the inorder labels, and the two trees~$T(R)$ and $S(R)$ are defined on the vertex set $[n]$ accordingly.
For each vertex in $T(R)$ (or in~$S(R)$), we refer to its subtree reached by a horizontal (resp.~vertical) edge as its \defn{horizontal} (resp.~\defn{vertical})~\defn{subtree}.


\subsection{Realizations of weak rectangulotopes}
\label{subsec:weakRectangulotopes}

Our first results are simple vertex and facet descriptions of the weak rectangulotopes~$\WRP(n)$.
For the vertices, we provide a concise formula for the coordinates, that consists of applying J.-L.~Loday's formula on each of the source and target trees of the rectangulation.
For the facets, we combine the right hand sides of two opposite associahedra to obtain those of the weak rectangulotopes.
These two descriptions materialize the result of Law and Reading that the weak rectangulotopes are Minkowski sums of two opposite associahedra~\cite{MR2871762}.

\begin{theorem}
  \label{thm:weakRectangulotope}
The weak rectangulotope $\WRP (n)$ is realized by the polytope equivalently described as:
  \begin{enumerate}[(i)]
  \item the convex hull of the points
    \[
    \sum_{i\in [n]} (\loday{w}^R_i - \antiloday{w}^R_i)\cdot \b{e}_i
    \]
    for all weak rectangulations $R$ of size~$n$, with
  \[
    \loday{w}^R_i \eqdef h^T_i\cdot v^T_i
    \qquad\text{and}\qquad
    \antiloday{w}^R_i \eqdef h^S_i\cdot v^S_i,
  \]
  where~$S$ and~$T$ are the source and target trees of the rectangulation~$R$, and $h^T_i$ and $v^T_i$ denote the number of leaves in the horizontal and vertical subtrees of $i$ in~$T$,
\item the intersection of the hyperplane defined by~$\sum_{i \in [n]} x_i = 0$ with the halfspaces defined by
  \[
  \sum_{i \in X} x_i \le \#\set{I \text{ interval of } [n]}{I \not\subseteq X \text{ and } I \not\subseteq [n] \ssm X}
  \]
  for all subsets~$\varnothing \ne X \subsetneq [n]$.
  \end{enumerate}
\end{theorem}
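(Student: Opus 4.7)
The plan is to exploit the decomposition $\WRP(n) = \Asso(n) + \big(-\Asso(n)\big)$ (translated so that $\sum_i x_i = 0$), established by S.~Law and N.~Reading~\cite{MR2871762}, and to apply \cref{thm:loday} to each summand.

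For part~(i), recall that the vertices of a Minkowski sum $P + Q$ are precisely the points $p + q$ where $p$ and $q$ range over vertices of $P$ and $Q$ simultaneously maximizing a common linear functional, \ie where the normal cones of $p$ and $q$ share a full-dimensional intersection. By \cref{thm:loday}\,(i), the vertex of $\Asso(n)$ indexed by a binary tree $T$ has $i$-th coordinate $h^T_i \cdot v^T_i$ (the horizontal and vertical labels play the role of the left and right labels in Loday's formula, which is symmetric in them), whence the vertices of $-\Asso(n)$ are the points with $i$-th coordinate $-h^S_i v^S_i$ for binary trees $S$. By~\cite{MR2871762}, the normal fan of $\Asso(n) + \big(-\Asso(n)\big)$ -- the common refinement of the two sylvester fans -- coincides with the quotient fan of the weak rectangulation congruence $\weakeq$. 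Its maximal cones are indexed by twin binary tree pairs, which are in bijection with weak rectangulations via $R \mapsto (S(R), T(R))$ through the Dulucq--Guibert bijection~\cite{MR1417289}. Consequently, the Minkowski vertex attached to $R$ has $i$-th coordinate $h^{T(R)}_i v^{T(R)}_i - h^{S(R)}_i v^{S(R)}_i = \loday{w}^R_i - \antiloday{w}^R_i$, as claimed.

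For part~(ii), I invoke additivity of support functions together with Postnikov's realization $\Asso(n) = \sum_J \Delta_J$, where $J$ ranges over intervals of $[n]$ and $\Delta_J \eqdef \conv\{\b{e}_i : i \in J\}$. The support function of $\Delta_J$ at $\b{e}_X$ equals $1$ if $J \cap X \ne \varnothing$ and $0$ otherwise, so
\[
  h_{\Asso(n)}(\b{e}_X) = \#\set{J \text{ interval of } [n]}{J \cap X \ne \varnothing}
\]
for every subset $X \subseteq [n]$, extending \cref{thm:loday}\,(ii) beyond intervals. Since $\sum_i x_i = \binom{n+1}{2}$ is constant on $\Asso(n)$, we have $h_{-\Asso(n)}(\b{e}_X) = h_{\Asso(n)}(\b{e}_{[n] \ssm X}) - \binom{n+1}{2}$, so summing the two,
\begin{align*}
  h_{\WRP(n)}(\b{e}_X) &= \#\{J : J \cap X \ne \varnothing\} + \#\{J : J \not\subseteq X\} - \#\{J \text{ interval}\} \\
  &= \#\set{J \text{ interval of } [n]}{J \not\subseteq X \text{ and } J \not\subseteq [n] \ssm X}
\end{align*}
by inclusion-exclusion on intervals, matching the claim. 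The constant hyperplane becomes $\sum_i x_i = 0$ after the translation.

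The main obstacle lies in part~(i): one must confirm that the pair $(S(R), T(R))$ produced by the geometric construction of \cref{subsec:sourceTargetTrees} is exactly the twin-tree pair that indexes the Minkowski vertex of $R$. This amounts to reconciling three different descriptions of the $\weakeq$-class of $R$ -- as a wall-slide equivalence class of rectangulations, as a Baxter permutation class, and as an intersection of a pair of sylvester cones -- which can be done by tracking these descriptions across a single flip connecting adjacent weak rectangulations, and invoking the connectivity of the flip graph.
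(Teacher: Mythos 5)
Your proposal takes the route that the paper explicitly declines: namely, invoking the Law--Reading identity $\WRP(n) = \Asso(n) + (-\Asso(n))$ directly and applying \cref{thm:loday} to each summand, rather than decomposing $\WRP(n)$ as a Minkowski sum of shard polytopes and computing the Loday coordinates by counting up and down arcs as in \cref{lem:lodaymax,lem:weakCoord}. The paper itself remarks, right after \cref{prop:weakMinkowski}, that its vertex and facet descriptions ``can be directly derived from [Law--Reading's] construction,'' and instead chooses the ``pedestrian'' shard-polytope argument precisely because it carries over verbatim to the strong rectangulotope in \cref{sec:strongRectangulotopes}, where no analogue of the two-opposite-associahedra decomposition is available. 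Your facet computation in part~(ii) is correct and complete: the identities $h_{-\Asso(n)}(\b{e}_X) = h_{\Asso(n)}(\b{e}_{[n]\ssm X}) - \binom{n+1}{2}$ and the inclusion-exclusion on intervals give exactly the paper's submodular function (singleton intervals contribute zero to both the student's count and the paper's count, so that discrepancy of whether intervals of size one are included is immaterial). For part~(i), the gap you flag at the end is genuine but modest: it is the identification of the geometrically defined source/target trees of \cref{subsec:sourceTargetTrees} with the twin-tree labels of the maximal cones in the common refinement of the two sylvester fans. The paper also relies on this identification -- it is encoded in the definition of the weak poset $\prec_w^R$ as the transitive closure of the orders induced by $S(R)$ and $T(R)$ and in the sentence preceding \cref{prop:weakMinkowski} that $\polytope{C}_w^R$ is the intersection of the sylvester cone of $T(R)$ with the anti-sylvester cone of $S(R)$ -- but the paper then \emph{uses} the weak poset in the proof of \cref{lem:weakCoord} to verify the Loday count directly, whereas you would need to match two \emph{a priori} distinct labelings. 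Your suggested flip-tracking argument would close this cleanly; alternatively one can read off the bijection from a diagonal representative of the weak class, where the source (resp.\ target) tree is exactly the binary tree whose linear extensions give the insertion of permutations from below (resp.\ above) the diagonal. In summary: a valid alternative proof along the lines the authors acknowledge, shorter for the weak case but not generalizing to the strong one.
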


\begin{table}
	\centerline{
	\begin{tabular}{cccccc}
		\includegraphics[scale=2]{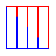} &
		\includegraphics[scale=2]{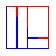} &
		\includegraphics[scale=2]{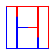} &
		\includegraphics[scale=2]{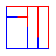} &
		\includegraphics[scale=2]{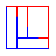} &
		\includegraphics[scale=2]{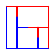}
		\\[-.1cm]
		$(-3, -1, 1, 3)$ &
		$(-3, -1, 5, -1)$ &
		$(-3, 3, -3, 3)$ &
		$(1, -5, 1, 3)$ &
		$(-3, 0, 5, -2)$ &
		$(-3, 5, -3, 1)$
		\\[.2cm]
		\includegraphics[scale=2]{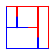} &
		\includegraphics[scale=2]{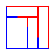} &
		\includegraphics[scale=2]{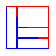} &
		\includegraphics[scale=2]{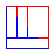} &
		\includegraphics[scale=2]{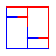} &
		\includegraphics[scale=2]{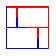}
		\\[-.1cm]
		$(-1, 3, -5, 3)$ &
		$(2, -5, 0, 3)$ &
		$(-3, 5, 0, -2)$ &
		$(-2, 0, 5, -3)$ &
		$(1, -5, 5, -1)$ &
		$(-1, 5, -5, 1)$
		\\[.2cm]
		\includegraphics[scale=2]{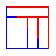} &
		\includegraphics[scale=2]{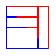} &
		\includegraphics[scale=2]{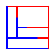} &
		\includegraphics[scale=2]{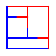} &
		\includegraphics[scale=2]{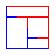} &
		\includegraphics[scale=2]{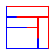}
		\\[-.1cm]
		$(3, -5, 0, 2)$ &
		$(2, 0, -5, 3)$ &
		$(-2, 5, 0, -3)$ &
		$(1, -3, 5, -3)$ &
		$(3, -5, 3, -1)$ &
		$(3, 0, -5, 2)$
		\\[.2cm]
		&		
		\includegraphics[scale=2]{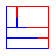} &
		\includegraphics[scale=2]{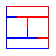} &
		\includegraphics[scale=2]{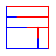} &
		\includegraphics[scale=2]{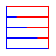} &
		\\[-.1cm]
		&
		$(-1, 5, -1, -3)$ &
		$(3, -3, 3, -3)$ &
		$(3, 1, -5, 1)$ &
		$(3, 1, -1, -3)$ &
	\end{tabular}
	}
	\caption{The vertices of the weak rectangulotope~$\WRP(4)$ of \cref{fig:weakRectangulotope}.}
	\label{tab:verticesWeakRectangulotope}
\end{table}

\begin{table}
	\centerline{
	\begin{tabular}{cccccc}
		\includegraphics[scale=2]{rectangulation1} &
		\includegraphics[scale=2]{rectangulation2} &
		\includegraphics[scale=2]{rectangulation3} &
		\includegraphics[scale=2]{rectangulation4} &
		\includegraphics[scale=2]{rectangulation5} &
		\includegraphics[scale=2]{rectangulation6}
		\\[-.1cm]
		$(-6, -2, 2, 6)$ &
		$(-6, -2, 9, -1)$ &
		$(-6, 5, -5, 6)$ &
		$(1, -9, 2, 6)$ &
		$(-6, 1, 9, -4)$ &
		$(-6, 9, -5, 2)$
		\\[.2cm]
		\includegraphics[scale=2]{rectangulation8} &
		\includegraphics[scale=2]{rectangulation9} &
		\includegraphics[scale=2]{rectangulation10} &
		\includegraphics[scale=2]{rectangulation11} &
		\includegraphics[scale=2]{rectangulation12} &
		\includegraphics[scale=2]{rectangulation13}
		\\[-.1cm]
		$(-2, 5, -9, 6)$ &
		$(4, -9, -1, 6)$ &
		$(-6, 9, 1, -4)$ &
		$(-4, 1, 9, -6)$ &
		$(2, -9, 9, -2)$ &
		$(-2, 9, -9, 2)$
		\\[.2cm]
		\includegraphics[scale=2]{rectangulation14} &
		\includegraphics[scale=2]{rectangulation15} &
		\includegraphics[scale=2]{rectangulation16} &
		\includegraphics[scale=2]{rectangulation17} &
		\includegraphics[scale=2]{rectangulation19} &
		\includegraphics[scale=2]{rectangulation20}
		\\[-.1cm]
		$(6, -9, -1, 4)$ &
		$(4, -1, -9, 6)$ &
		$(-4, 9, 1, -6)$ &
		$(2, -5, 9, -6)$ &
		$(6, -9, 5, -2)$ &
		$(6, -1, -9, 4)$
		\\[.2cm]
		\includegraphics[scale=2]{rectangulation21} &
		\includegraphics[scale=2]{rectangulation22} &
		\includegraphics[scale=2]{rectangulation23} &
		\includegraphics[scale=2]{rectangulation24} &
		\includegraphics[scale=2]{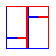} &
		\includegraphics[scale=2]{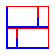}
		\\[-.1cm]
		$(-1, 9, -2, -6)$ &
		$(6, -5, 5, -6)$ &
		$(6, 2, -9, 1)$ &
		$(6, 2, -2, -6)$ &
		$(1, -9, 9, -1)$ &
		$(-1, 9, -9, 1)$
	\end{tabular}
	}
	\caption{The vertices of the strong rectangulotope~$\SRP(4)$ of \cref{fig:strongRectangulotope}.}
	\label{tab:verticesStrongRectangulotope}
\end{table}

For instance, the weak rectangulotope~$\WRP(4)$ is illustrated in \cref{fig:weakRectangulotope}.
The coordinates of the vertices of~$\WRP(4)$ corresponding to all weak rectangulations are gathered in \cref{tab:verticesWeakRectangulotope}.
The facet inequalities of~$\WRP(4)$ corresponding to each nonempty proper subset of~$[4]$ are gathered~in~\cref{tab:facetsWeakRectangulotope}.

\begin{table}
	\centerline{
	\begin{tabular}{c|cccccccccccccc}
		$J$ &
		$1$ &
		$2$ &
		$3$ &
		$4$ &
		$12$ &
		$13$ &
		$14$ &
		$23$ &
		$24$ &
		$34$ &
		$123$ &
		$124$ &
		$134$ &
		$234$
		\\
		\hline
		$f_{\weakeq}(J)$ &
		$3$ &
		$5$ &
		$5$ &
		$3$ &
		$4$ &
		$6$ &
		$5$ &
		$5$ &
		$6$ &
		$4$ &
		$3$ &
		$5$ &
		$5$ &
		$3$
	\end{tabular}
	}
	\vspace{-.2cm}
	\caption{The facets of the weak rectangulotope~$\WRP(4)$ of \cref{fig:weakRectangulotope}.}
	\label{tab:facetsWeakRectangulotope}
\end{table}

  The vertex of~$\WRP (16)$ corresponding to the weak rectangulation of \cref{fig:weakRectangulation} is
  \[
  (-3, 0, 26, 2, -9, 5, -69, -4, 17, 0, -8, 59, 2, -20, 0, 2).
  \]


\subsection{Realizations of strong rectangulotopes}
\label{subsec:strongRectangulotopes}

In order to exhibit a similar realization for the strong rectangulotopes, we need to count subsets of leaves that lie in some subtrees of both the source and target trees of a rectangulation.
Two leaves of the trees $T(R)$ and $S(R)$ are said to be \defn{common leaves} if the edges to their parents lie on the same segment of $R$.
Note that the common leaves of~$T(R)$ and~$S(R)$ only depend on the weak class of the rectangulation~$R$.

We denote by ${}_i \!\!\! \raisebox{.03cm}{\verticalPattern} \!\!\! {}^j$ the situation where the rectangles $r_i$ and $r_j$ of inorder labels $i$ and $j$ share a vertical segment containing the right edge of~$r_i$ and the left edge of~$r_j$, and the bottom edge of $r_i$ is below the top edge of~$r_j$.
Similarly, we denote by~$\;\;\; {}^i \!\!\!\!\!\!\!\! \horizontalPattern \!\!\!\!\!\!\!\! \raisebox{-.06cm}{$_j$} \;\;\;$ the situation where the rectangles $r_i$ and $r_j$ share a horizontal segment containing the bottom edge of~$r_i$ and the top edge of~$r_j$, and the right edge of~$r_i$ is on the right of the left edge of~$r_j$.
We use the Iverson bracket $\llbracket \varphi\rrbracket$, which is equal to 1 if $\varphi$ holds, and 0 otherwise, and~$\neg \, \varphi$ for the logical negation of~$\varphi$.

\begin{theorem}
\label{thm:strongRectangulotope}
The strong rectangulotope~$\SRP (n)$ is realized by the polytope equivalently described as:
  \begin{enumerate}[(i)]
  \item the convex hull of the points
  \[
  \sum_{i,j\in [n], i< j} (\yin{w}^R_{i,j} - \yang{w}^R_{i,j})\cdot (\b{e}_i - \b{e}_j),
  \]
   for all strong rectangulations $R$ of size~$n$, with
  \[
    \yin{w}^R_{i,j} \eqdef h^T_i \cdot cv^{T,S}_{i,j}\cdot h^S_ j\cdot \llbracket \; \neg \; {}_i \!\!\! \raisebox{.03cm}{\verticalPattern} \!\!\! {}^j \; \rrbracket
    \qquad\text{and}\qquad
    \yang{w}^R_{i,j} \eqdef v^S_i \cdot ch^{S,T}_{i,j}\cdot v^T_j \cdot \llbracket \; \neg \;\;\;\; {}^i \!\!\!\!\!\!\!\! \horizontalPattern \!\!\!\!\!\!\!\! \raisebox{-.06cm}{$_j$} \;\; \;\; \rrbracket,
  \]
 where:
  \begin{itemize}
  \item $S$ and $T$ are the source and target trees of the rectangulation~$R$,
  \item $h^T_i$ and $v^T_i$ denote the number of leaves in the horizontal and vertical subtrees of $i$ in~$T$,
  \item $ch^{T,S}_{i,j}$ (resp.~$cv^{T,S}_{i,j}$) denote the number of common leaves of the horizontal (resp.~vertical) subtree of $i$ in $T$ and the horizontal (resp.~vertical) subtree of $j$ in $S$,
  \end{itemize}
\item the intersection of the hyperplane defined by~$\sum_{i \in [n]} x_i = 0$ with the halfspaces defined by
  \[
  \sum_{i \in X} x_i \le \#\set{I,J}{I \not\subseteq X \text{ and } J \not\subseteq [n] \ssm X} + \#\set{I,J}{I \not\subseteq [n] \ssm X \text{ and } J \not\subseteq X}
  \]
  for all subsets~$\varnothing \ne X \subsetneq [n]$, where~$I,J$ are nonempty consecutive intervals of~$[n]$, meaning that~$\max(I) = \min(J)-1$.
  \end{enumerate}
\end{theorem}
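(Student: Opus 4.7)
The plan is to establish both descriptions simultaneously by realizing $\SRP(n)$ as a Minkowski sum of elementary polytopes indexed by arcs of the strong rectangulation congruence, following the quotientope framework of~\cite{MR3964495,MR4584712}. Specifically, I would write $\SRP(n) = \sum_{(I,J),\epsilon} \nabla_{I,J}^\epsilon$ where the sum runs over pairs $(I,J)$ of nonempty consecutive intervals of $[n]$ and over two orientations $\epsilon \in \{+,-\}$, with each $\nabla_{I,J}^\epsilon$ an elementary deformed permutahedron (a simplex or a prism). This indexing set is motivated by the arc diagram description of the strong rectangulation congruence from~\cite{Reading-arcDiagrams}, where each arc corresponds to a pair of consecutive intervals together with an orientation.

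For (ii), I would compute the support function of $\SRP(n)$ at direction $\one_X$ as $\sum_{(I,J),\epsilon} h_{\nabla^\epsilon_{I,J}}(\one_X)$. Each summand equals $1$ when the cut $(X, [n]\ssm X)$ crosses the corresponding arc, and $0$ otherwise. A direct case analysis shows that the two orientations contribute respectively the two counts in the statement: the $+$ orientation contributes $1$ exactly when $I \not\subseteq X$ and $J \not\subseteq [n] \ssm X$, and the $-$ orientation contributes $1$ exactly when $I \not\subseteq [n] \ssm X$ and $J \not\subseteq X$. Summing recovers the desired facet formula.

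For (i), fix a strong rectangulation $R$ and a generic direction $c$ in its cone of the quotient fan. The vertex of $\SRP(n)$ maximized by $c$ is $\sum_{(I,J),\epsilon} v_{(I,J)}^\epsilon(c)$, where each $v_{(I,J)}^\epsilon(c)$ is of the form $\epsilon(\b{e}_i - \b{e}_j)$ for specific $i \in I$ and $j \in J$ determined by the restriction of $c$ to the cone. Regrouping the contributions by the pair $(i,j)$ and using the correspondence between leaves of $S(R)$, $T(R)$ and the elementary segments of $R$, the total coefficient of $\b{e}_i - \b{e}_j$ becomes a product of three counts, one for each slot of the underlying arc, yielding either $h^T_i \cdot cv^{T,S}_{i,j} \cdot h^S_j$ or $v^S_i \cdot ch^{S,T}_{i,j} \cdot v^T_j$. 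The Iverson bracket indicators arise because when rectangles $i$ and $j$ are already in a constrained adjacency pattern ${}_i\!\!\!\raisebox{.03cm}{\verticalPattern}\!\!\!{}^j$ or ${}^i\!\!\!\!\!\!\!\!\horizontalPattern\!\!\!\!\!\!\!\!\raisebox{-.06cm}{$_j$}$, the corresponding shard degenerates to a point and contributes nothing.

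The main obstacle will be part (i), specifically the identification of the combinatorial counts with products of tree statistics. Unlike \cref{thm:weakRectangulotope}, where the Minkowski decomposition splits cleanly into two opposite associahedra, the strong rectangulotope's decomposition entangles $S(R)$ and $T(R)$ through the common-leaf statistics $ch^{T,S}_{i,j}$ and $cv^{T,S}_{i,j}$. Proving the identity rigorously requires a careful bijection between arcs crossing a fixed chamber of the quotient fan and triples of leaves in the pair $(S(R), T(R))$. I would establish this either inductively, by flipping from a canonical staircase rectangulation and tracking how both sides of the formula change under each type of flip from \cref{fig:flips}, or by a direct double counting organized on the NW--SE order of the rectangles. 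As a sanity check, I would compare the resulting coordinates against the vertices of $\SRP(4)$ listed in \cref{tab:verticesStrongRectangulotope}.
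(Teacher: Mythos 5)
Your overall framework matches the paper's: realize $\SRP(n)$ as a Minkowski sum of shard polytopes indexed by the arcs of the arc ideal, which are precisely the yin and yang arcs, naturally parametrized by pairs of consecutive intervals together with an orientation. Your treatment of part~(ii) is essentially correct, and your plan for part~(i) --- maximize each summand in a generic direction inside the chamber of $R$, then regroup by the resulting pair $(i,j)$ --- is also the paper's plan.

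However, there is a genuine gap in your explanation of the Iverson bracket, and it is not merely cosmetic. You claim that when the pattern ${}_i\!\!\!\raisebox{.03cm}{\verticalPattern}\!\!\!{}^j$ holds, ``the corresponding shard degenerates to a point.'' This is false: the shard polytope $\SP(\yinArc{I}{J})$ is a fixed polytope $\conv\big(\{\b{0}\}\cup\set{\b{e}_a-\b{e}_b}{a\in I,\,b\in J}\big)$ that does not depend on $R$ and never degenerates. What actually happens is that for a generic direction in the chamber of $R$, the optimal vertex of $\SP(\yinArc{I}{J})$ is $\b{e}_i-\b{e}_j$ if and only if $i$ and $j$ are the $\arg\max$/$\arg\min$ of $\sigma^{-1}$ over $I$ and $J$ respectively \emph{and} $\sigma^{-1}_i>\sigma^{-1}_j$; otherwise the optimal vertex is $\b{0}$, which contributes nothing. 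Thus the count of contributing yin arcs is $h^T_i\cdot cv^{T,S}_{i,j}\cdot h^S_j$ when $i\succ_s^R j$ and zero otherwise. Replacing the condition ``$i\succ_s^R j$'' by the visible pattern condition $\neg\,{}_i\!\!\!\raisebox{.03cm}{\verticalPattern}\!\!\!{}^j$ requires a separate, nontrivial geometric lemma: if $i<j$ and $i\not\succ_s^R j$ and $cv^{T,S}_{i,j}>0$, then the rectangles $r_i,r_j$ must form the pattern ${}_i\!\!\!\raisebox{.03cm}{\verticalPattern}\!\!\!{}^j$. Without this implication the bracket formula is not justified. You do not identify this as the missing step, and your degeneracy heuristic would not lead you to it.

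Separately, your two proposed routes to the product formula $h^T_i\cdot cv^{T,S}_{i,j}\cdot h^S_j$ (induction on flips from a staircase rectangulation, or a double count on the NW--SE order) are left entirely as sketches; neither is worked out, and neither matches the paper's actual argument, which is a direct bijection: for a fixed linear extension of the strong poset, one shows that the left endpoint of $I$ has exactly $h^T_i$ admissible choices (one per leaf in the horizontal subtree of $i$ in $T$), symmetrically $h^S_j$ choices for the right endpoint of $J$, and the constraint $\max(I)+1=\min(J)$ couples the remaining two endpoints through the common leaves, giving the factor $cv^{T,S}_{i,j}$. A minor terminological slip: the $\nabla^\epsilon_{I,J}$ are pyramids over products of two simplices, not ``simplices or prisms.''
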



For instance, the strong rectangulotope~$\SRP(4)$ is illustrated in \cref{fig:strongRectangulotope}.
The coordinates of the vertices of~$\SRP(4)$ corresponding to all strong rectangulations are gathered in \cref{tab:verticesStrongRectangulotope}.
The facet inequalities of~$\SRP(4)$ corresponding to each nonempty proper subset of~$[4]$ are gathered in \cref{tab:facetsStrongRectangulotope}.

\begin{table}
	\centerline{
	\begin{tabular}{c|cccccccccccccc}
		$J$ &
		$1$ &
		$2$ &
		$3$ &
		$4$ &
		$12$ &
		$13$ &
		$14$ &
		$23$ &
		$24$ &
		$34$ &
		$123$ &
		$124$ &
		$134$ &
		$234$
		\\
		\hline
		$f_{\strongeq}(J)$ &
		$6$ &
		$9$ &
		$9$ &
		$6$ &
		$8$ &
		$11$ &
		$10$ &
		$10$ &
		$11$ &
		$8$ &
		$6$ &
		$9$ &
		$9$ &
		$6$
	\end{tabular}
	}
	\vspace{-.2cm}
	\caption{The facets of the strong rectangulotope~$\SRP(4)$ of \cref{fig:strongRectangulotope}.}
	\label{tab:facetsStrongRectangulotope}
\end{table}

\pagebreak
  The vertex of~$\SRP (16)$ corresponding to the strong rectangulation of \cref{fig:strongRectangulation} is
  \[
  (-41, 5, 262, 24, -110, 50, -525, -50, 184, 3, -88, 450, 35, -221, -5, 27).
  \]

We think of the formula of \cref{thm:strongRectangulotope}\,(i) as the analogue of J.-L.-Loday's product formula of \cref{thm:loday}\,(i) for the coordinates of the vertices of the associahedron.
We note however that our formula has a quadratic number of terms.
It remains unclear to us if there is a simple linear formula to compute the coordinates of the vertex of~$\SRP(n)$ corresponding to a given strong rectangulation.


\subsection{Plan of the paper}
\label{subsec:plan}

\cref{sec:quotientopes} is dedicated to the background on quotientopes and their realizations as Minkowski sums of \defn{shard polytopes}, due to A.~Padrol, V.~Pilaud, and J.~Ritter~\cite{MR4584712}, which will be our main tool throughout. \cref{sec:weakRectangulotopes} details the case of weak rectangulations, while \cref{sec:strongRectangulotopes} deals with strong rectangulations. In both cases, we provide both the vertex and facet descriptions of these polytopes.


\subsection{Acknowledgments}

This work was initiated at the Workshop on Combinatorics, Algorithms, and Geometry held on March 4--8, 2024 in Dresden, Germany.
The authors thank Namrata and Torsten M\"utze for the organization and the other participants of the workshop for the stimulating interactions, notably on other combinatorial aspects of rectangulations.
We are grateful to two anonymous referees for many suggestions on this paper.


\section{Quotientopes}
\label{sec:quotientopes}

We now briefly recall some results on the weak Bruhat order and its quotients, both from a lattice and geometric perspective.
We refer to \cite{MR2142177, Reading-arcDiagrams, MR3645056, MR3645055, MR3964495, MR4584712} for details.


\subsection{Weak order and noncrossing arc diagrams}
\label{subsec:noncrossingArcDiagrams}

Denote by~$\f{S}_n$ the set of permutations of~$[n]$.
The \defn{inversion set} of~$\sigma \in \f{S}_n$ is~$\inv(\sigma) \eqdef \set{(\sigma_i, \sigma_j)}{1 \le i < j \le n \text{ and } \sigma_i > \sigma_j}$.
The \defn{weak Bruhat order}\footnote{The weak Bruhat order is usually just referred to as the weak order~\cite{MR1066460, MR2133266}. In this paper, we prefer to use weak Bruhat order to avoid any confusion with the weak poset defined on weak rectangulations.} is the lattice on the permutations of~$\f{S}_n$ defined by the inclusion of their inversion sets.
See \cref{fig:sylvesterCongruence}\,(left).
Note that the cover relations in the weak Bruhat order are given by the transpositions of two adjacent letters.

An \defn{arc} on~$[n]$ is a quadruple~$(a, b, A, B)$ where~$1 \le a < b \le n$ and~$A \sqcup B$ forms a partition of the interval~${]a,b[} \eqdef \{a+1, \dots, b-1\}$.
We represent an arc by an abscissa monotone curve wiggling around the horizontal axis, starting at point~$a$ and ending at point~$b$, and passing above the points of~$A$ and below the points of~$B$.
A \defn{noncrossing arc diagram} on~$[n]$ is a collection of arcs on~$[n]$ where any two arcs do not cross in their interior and have distinct left endpoints and distinct right endpoints (but the right endpoint of an arc can be the left endpoint of another arc).

\vspace{-.1cm}
\parpic(4cm,2.5cm)(10pt, 150pt)[r][b]{\includegraphics[scale=.9]{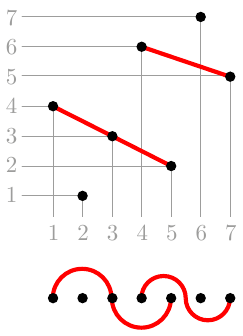}}{
In~\cite{Reading-arcDiagrams}, N.~Reading defined an elegant bijection between permutations of~$[n]$ and noncrossing arc diagrams on~$[n]$.
It sends a permutation~$\sigma$ to the noncrossing arc diagram with an arc~$(\sigma_{j+1}, \sigma_j, A_j, B_j)$ for each descent~$j \in [n-1]$ of~$\sigma$ (\ie with~$\sigma_j > \sigma_{j+1}$), where

\vspace{-.2cm}
\begin{minipage}{10cm}
\begin{align*}
A_j & \eqdef \set{\sigma_i}{1 \le i < j \text{ and } \sigma_{j+1} < \sigma_i < \sigma_j} \\
\text{and} \qquad
B_j & \eqdef \set{\sigma_k}{j+1 < k \le n \text{ and } \sigma_{j+1} < \sigma_k < \sigma_j}.
\end{align*}
\end{minipage}

\vspace{.2cm}
\noindent
As illustrated on the right with~$\sigma = 2531746$, this can also been visualized by representing the table~$(\sigma_j,j)$ of the permutation~$\sigma$, drawing the segments corresponding to the descents of~$\sigma$, and letting all points fall on the horizontal axis, allowing the segments to bend but not to cross each other nor to pass through a point.
The single arcs correspond to permutations with a single descent, that is, to join irreducible permutations of the weak Bruhat order.
In general, the noncrossing arc diagram of a permutation~$\sigma$ actually encodes the canonical join representation of~$\sigma$ in the weak Bruhat order (which was known to exist, as the weak Bruhat order is join semidistributive).
See~\cite{Reading-arcDiagrams} for details.
}


\subsection{Quotients and arc ideals}
\label{subsec:arcIdeals}

\afterpage{
\begin{figure}
	\capstart
	\centerline{\includegraphics[scale=.6]{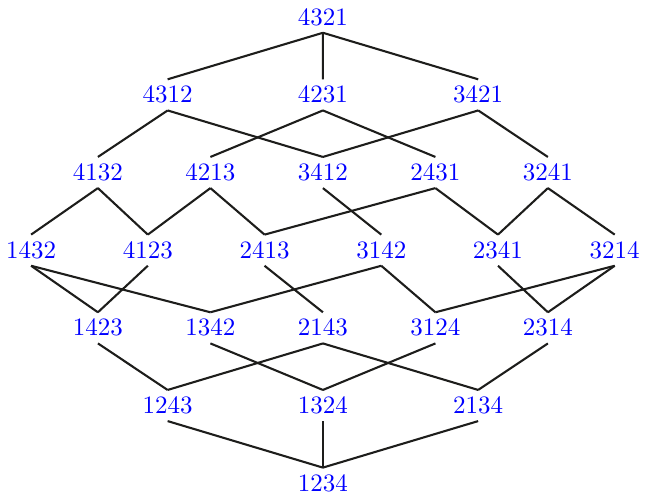} \; \includegraphics[scale=.6]{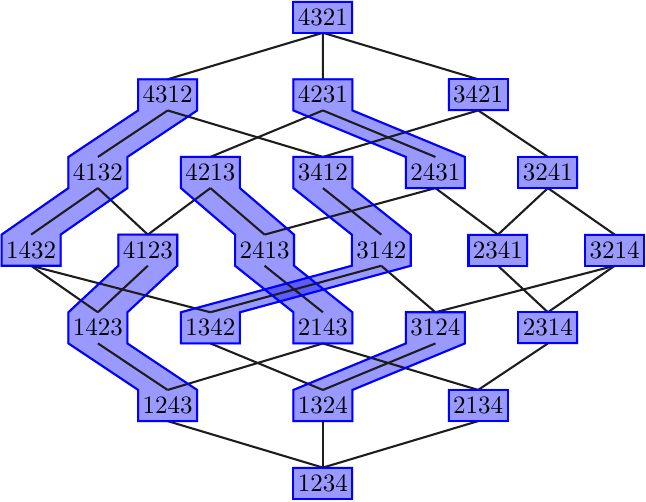} \; \includegraphics[scale=.48]{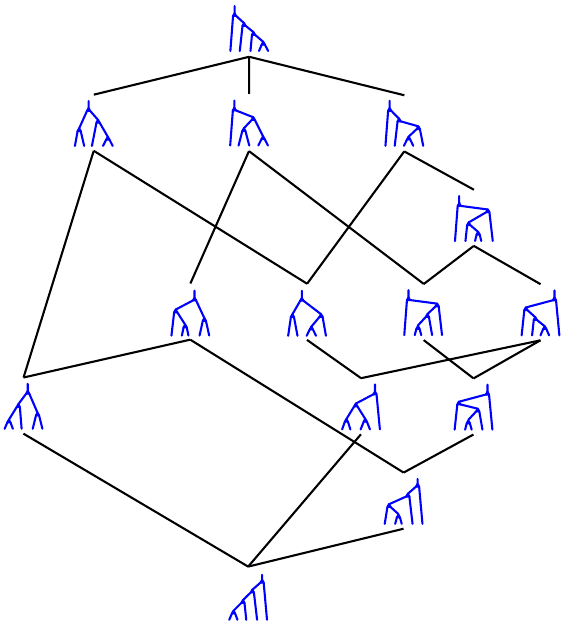}}
	\caption{The weak Bruhat order on~$\f{S}_4$ (left), the sylvester congruence~$\equiv_\textrm{sylv}$~(middle), and the Tamari lattice (right). \cite[Fig.~1 \& 2]{MR3964495}}
	\label{fig:sylvesterCongruence}
\end{figure}
}

A \defn{lattice congruence} of the weak Bruhat order is an equivalence relation~$\equiv$ on~$\f{S}_n$ that respects the meet and join operations, \ie such that $x \equiv x'$ and~$y \equiv y'$ implies $x \meet y \, \equiv \, x' \meet y'$ and~$x \join y \, \equiv \, x' \join y'$.
The \defn{lattice quotient}~$\f{S}_n/{\equiv}$ is the lattice on the congruence classes of~$\equiv$ where~$X \le Y$ if and only if there exist~$x \in X$ and~$y \in Y$ such that~$x \le y$, and~$X \meet Y$ (resp.~$X \join Y$) is the congruence class of~$x \meet y$ (resp.~$x \join y$) for any~$x \in X$~and~$y \in Y$.

For instance, the sets of linear extensions of binary trees (labeled in inorder and oriented toward their roots) are the classes of the \defn{sylvester congruence}~\cite{MR2142078}.
See \cref{fig:sylvesterCongruence}\,(middle).
The quotient of the weak Bruhat order by the sylvester congruence is the classical \defn{Tamari lattice}.
See \cref{fig:sylvesterCongruence}\,(right).

Respecting the meet and join operation is a strong condition that imposes additional structure on~$\equiv$.
In fact, all the congruence classes of~$\equiv$ are intervals of the weak Bruhat order, and the quotient~$\f{S}_n/{\equiv}$ is isomorphic (as a poset) to the subposet of the weak Bruhat order induced by permutations which are minimal in their class.
Moreover, the latter are precisely the permutations whose noncrossing arc diagrams only use arcs corresponding to join irreducible permutations which are minimal in their class, and we denote by~$\c{A}_\equiv$ this set of arcs.
In other words, the classes of~$\equiv$ are in bijection with noncrossing arc diagrams using only arcs in~$\c{A}_\equiv$.
This bijection actually translates the fact that canonical join representations behave properly under lattice quotients.

For instance, the join irreducible permutations minimal in their sylvester congruence class are precisely the \defn{up arcs}, \ie the arcs of the form~$(a, b, {]a,b[}, \varnothing)$ (similarly, we call \defn{down arcs} the arcs of the form~$(a, b, \varnothing, {]a,b[})$).
The corresponding noncrossing arc diagrams are famously known as noncrossing partitions.

\vspace{-.1cm}
\parpic(5cm,2.8cm)(10pt, 90pt)[r][b]{\includegraphics[scale=.5]{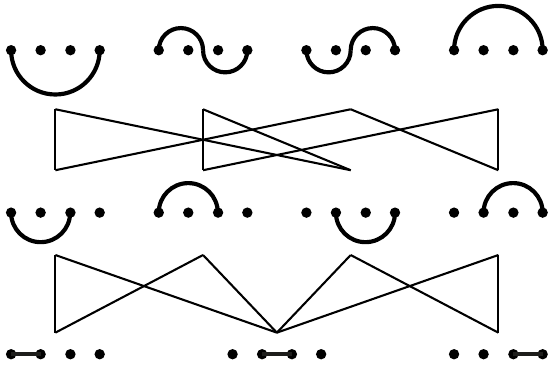}}{
An arc~$(a, b, A, B)$ is a \defn{subarc} of an arc~$(a', b', A', B')$ if \linebreak $a' \le a < b \le b'$ and~$A \subseteq A'$ while~$B \subseteq B'$.
An \defn{arc ideal} is a subset of arcs closed by subarcs.
The map~${\equiv} \mapsto \c{A}_\equiv$ is a bijection between the lattice congruences of the weak Bruhat order and the arc ideals.
In other words, the lattice of congruences of the weak Bruhat order is distributive, and its poset of join irreducibles is isomorphic to the subarc order, illustrated on the right for~$n = 4$.
}


\subsection{Quotient fans and shards}
\label{subsec:quotientFans}

\begin{figure}
	\capstart
	\centerline{\includegraphics[scale=.75]{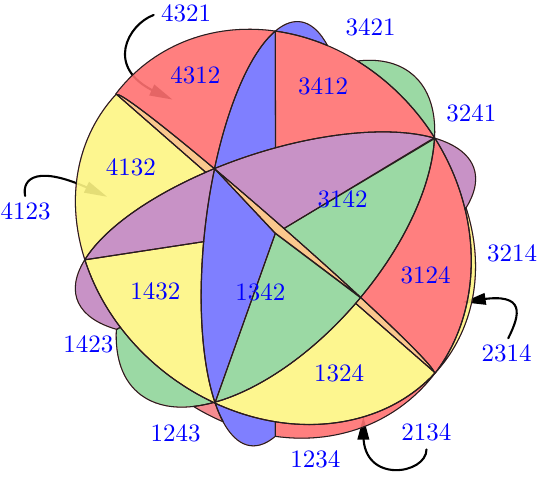} \quad \raisebox{-.35cm}{\includegraphics[scale=.75]{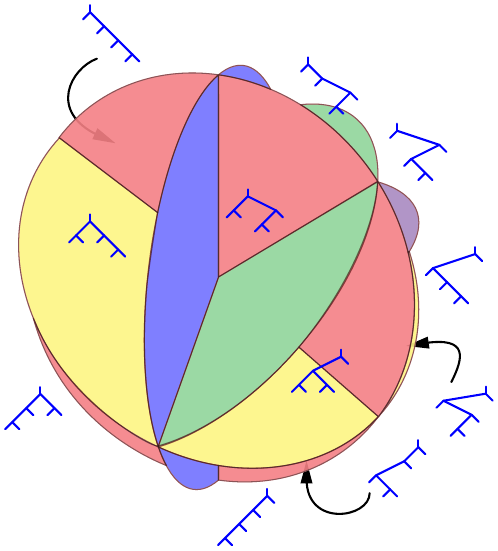}}}
	\caption{The braid fan (left) and the sylvester fan (right). \cite[Fig.~1]{MR3964495} \&~\cite[Fig.~5]{MR4584712}}
	\label{fig:fans}
\end{figure}

The \defn{braid arrangement} is the arrangement consisting of the hyper\-planes~$\set{\b{x} \in \R^n}{x_a = x_b}$ for all~$1 \le a < b \le n$.
It has a chamber~$\set{\b{x} \in \R^n}{x_{\sigma_1} \le \dots \le x_{\sigma_n}}$ for each permutation~$\sigma$ of~$[n]$. 
It has a ray for each nonempty proper subset~$\varnothing \ne J \subsetneq [n]$.
See \cref{fig:fans}\,(left).
(A ray is a dimension~$1$ cone in the essentialization of the braid arrangement obtained by slicing it with a hyperplane normal to its lineality space, which is generated by the vector~$(1, \dots, 1)$.)

The \defn{shard} of an arc~$(a, b, A, B)$ is the piece of the braid hyperplane~${x_a = x_b}$ defined by the inequalities~$x_{a'} < x_a$ for all~$a' \in A$ and~$x_b < x_{b'}$ for all~$b' \in B$.
In other words, the shards decompose each hyperplane~$x_a = x_b$ of the braid arrangement into~$2^{b-a-1}$ cones.

The \defn{quotient fan} of a congruence~$\equiv$ of the weak Bruhat order is the polyhedral fan~$\c{F}_\equiv$~where
\begin{itemize}
\item the maximal cones are obtained by glueing together the chambers of the braid arrangement corresponding to permutations in the same congruence class of~$\equiv$,
\item the union of the codimension~$1$ cones is the union of the shards of the arcs of~$\c{A}_\equiv$.
\end{itemize}
(These two descriptions are equivalent.)
By construction, the braid fan refines the quotient fan~$\c{F}_\equiv$, and the dual graph of the quotient fan~$\c{F}_\equiv$ is isomorphic to the cover graph of the quotient~$\f{S}_n/{\equiv}$.
For instance, \cref{fig:fans}\,(right) represents the \defn{sylvester fan} (the quotient fan of the sylvester congruence of \cref{fig:sylvesterCongruence}\,(middle)) for~$n = 4$, whose dual graph is the cover graph of the Tamari lattice of \cref{fig:sylvesterCongruence}\,(right).
The maximal cone corresponding to a binary tree~$T$ is given by~$\polytope{C}^T = \set{\b{x} \in \R^n}{x_i \le x_j \text{ for each edge } i \to j \text{ in } T}$.

It was shown in~\cite[Lem.~23]{MR4328906} that the ray of the braid arrangement corresponding to a nonempty proper subset~$\varnothing \ne J \subsetneq [n]$ is preserved in the quotient fan~$\c{F}_\equiv$ if and only if the arc ideal~$\c{A}_\equiv$ contains the $|J|-1$ down arcs joining two consecutive elements of~$J$ and the~$n-|J|-1$ up arcs joining two consecutive elements of~$[n] \ssm J$.
An immediate consequence is that all $2^n-2$ rays of the braid arrangement are preserved in the quotient fan~$\c{F}_\equiv$ if and only if the arc ideal~$\c{A}_\equiv$ contains all up arcs and all down arcs.


\subsection{Quotientopes and shard polytopes}
\label{subsec:quotientopes}

A \defn{quotientope} for a lattice congruence~$\equiv$ of the weak Bruhat order is a polytope whose normal fan is the quotient fan~$\c{F}_\equiv$.
In particular, the skeleton of a quotientope, oriented in the direction~$(n,n-1, \dots, 1) - (1, 2, \dots, n) = (n-1, n-3, \dots, 3-n, 1-n)$, is isomorphic to the Hasse diagram of the quotient~$\f{S}_n/{\equiv}$.
For instance, the associahedron~$\Asso(n)$ (\cref{fig:quotientopes}\,(right) when~$n = 4$) is a quotientope for the sylvester congruence (\cref{fig:sylvesterCongruence}\,(middle) when~$n = 4$), its normal fan is the sylvester fan (\cref{fig:fans}\,(right) when~$n = 4$), and its oriented skeleton is the Hasse diagram of the Tamari lattice (\cref{fig:sylvesterCongruence}\,(right) when~$n = 4$).
The existence of quotientopes was first proved in \cite{MR3964495} and later better understood in~\cite{MR4584712} using Minkowski sums of shard polytopes, the approach we will use throughout the paper.

The \defn{Minkowski sum} of two polytopes~$\polytope{P}, \polytope{Q} \subset \R^n$ is~$\polytope{P} + \polytope{Q} \eqdef \set{\b{p} + \b{q}}{\b{p} \in \polytope{P} \text{ and } \b{q} \in \polytope{Q}} \subset \R^n$.
Recall that for any vector~$\b{v} \ne \b{0}$ of~$\R^n$, the face of~$\polytope{P} + \polytope{Q}$ maximizing the scalar product with~$\b{v}$ is the Minkowski sum of the faces of~$\polytope{P}$ and~$\polytope{Q}$ maximizing the scalar product with~$\b{v}$.
In particular, if~$\b{v}$ is a generic direction, the vertex of~$\polytope{P} + \polytope{Q}$ that is extremal in direction~$\b{v}$ is just the sum of the vertices of~$\polytope{P}$ and~$\polytope{Q}$ that are extremal in direction~$\b{v}$.
Moreover, the normal fan of the Minkowski sum~$\polytope{P} + \polytope{Q}$ is the common refinement of the normal fans of~$\polytope{P}$ and~$\polytope{Q}$.

Consider an arc~$\alpha \eqdef (a, b, A, B)$ on~$[n]$.
An \defn{$\alpha$-alternating matching} is a sequence~$a \le i_1 < j_1 < i_2 < j_2 < \dots < i_q < j_q \le b$ such that~$i_p \in \{a\} \cup A$ and~$j_p \in \{b\} \cup B$ for all~$p \in [q]$.
Its \defn{characteristic vector} is~$\sum_{p \in [q]} (\b{e}_{i_p} - \b{e}_{j_p})$.
The \defn{shard polytope} of~$\alpha$ is the convex hull~$\SP(\alpha)$ of the characteristic vectors of all $\alpha$-alternating matchings.
We refer to \cref{fig:shardPolytopes} for an illustration of this definition.
It was shown in~\cite{MR4584712} that any Minkowski sum of positive scalings of the shard polytopes~$\SP(\alpha)$ for all arcs~$\alpha$ in the arc ideal~$\c{A}_\equiv$ is a quotientope for~$\equiv$.

\begin{figure}
	\capstart
	\centerline{\includegraphics[scale=1]{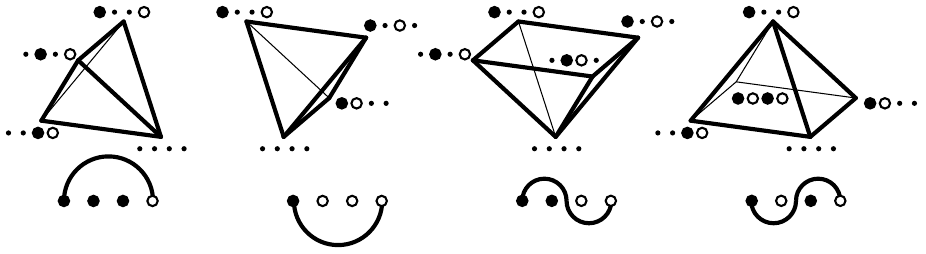}}
	\caption{The shard polytopes of the up arc~$(1, 4, \{2, 3\}, \varnothing)$, the down arc~$(1, 4, \varnothing, \{2, 3\})$, the yin arc~$(1, 4, \{2\}, \{3\})$, and the yang arc~$(1, 4, \{3\}, \{2\})$. The vertices of the shard polytope of an arc~$\alpha \eqdef (a, b, A, B)$ are labeled by the corresponding $\alpha$-alternating matchings, where we use solid dots~$\bullet$ for elements in~$\{a\} \cup A$ and hollow dots~$\circ$ for elements in~$B \cup \{b\}$. The corresponding vertex coordinates are directly read replacing~$\bullet$ by~$1$ and~$\circ$ by~$-1$. For instance, the vertex labeled~${\bullet \cdot \cdot \,\circ}$ has coordinates~$(1,0,0,-1)$. Adapted from~\cite[Fig.~10]{MR4584712}.}
	\label{fig:shardPolytopes}
\end{figure}

A set function $f:2^{[n]}\mapsto \R$ is said to be \defn{submodular} when $f(X \cup Y) + f(X \cap Y) \le f(X) + f(Y)$ for any~$X, Y \subseteq [n]$.
By definition, any quotientope is a deformed permutahedron, and can thus be written as
  \[
  \Bigset{x \in \R^n}{ \sum_{i \in [n]} x_i = f([n]) \text{ and } \sum_{i \in X} x_i \le f(X) \text{ for all } \varnothing \ne X \subseteq [n]},
  \]
where~$f$ is a submodular set function.
We will need the following simple result: Given a collection $\polytope{P}_1, \polytope{P}_2,\ldots ,\polytope{P}_k$ of deformed permutahedra defined by submodular set functions $f_1,f_2,\ldots ,f_k$, their Minkowski sum $\sum_i \polytope{P}_i$ is defined by the submodular function $f \eqdef \sum_i f_i$.


\section{Weak rectangulotopes}
\label{sec:weakRectangulotopes}

This section is devoted to weak rectangulations and the proof of \cref{thm:weakRectangulotope}.


\subsection{The weak poset}
\label{subsec:weakPoset}

Given a rectangulation~$R$, let us consider its source and target trees~$S(R)$ and~$T(R)$, and orient the horizontal edges from left to right, and the vertical edges from bottom to top.
This defines two partial orders on $[n]$.
The \defn{weak poset}~$([n],\prec_w^R)$ of the rectangulation~$R$ is the transitive closure of the union of these two partial orders defined by~$S(R)$ and~$T(R)$.
Equivalently, the weak poset can be defined by considering the adjacency graph of the rectangles in a diagonal representative of the weak equivalence class of $R$, orienting its edges from left to right and from bottom to top, and taking the transitive closure of this directed acyclic graph.
These posets were characterized (and called Baxter posets) by E.~Meehan~\cite{MR4014603}.
The weak poset is a two-dimensional lattice, whose minimum is the root of~$S(R)$ and whose maximum is the root of~$T(R)$.
See \cref{fig:weakPoset}.

\begin{figure}
	\centerline{\includegraphics[width=.4\textwidth]{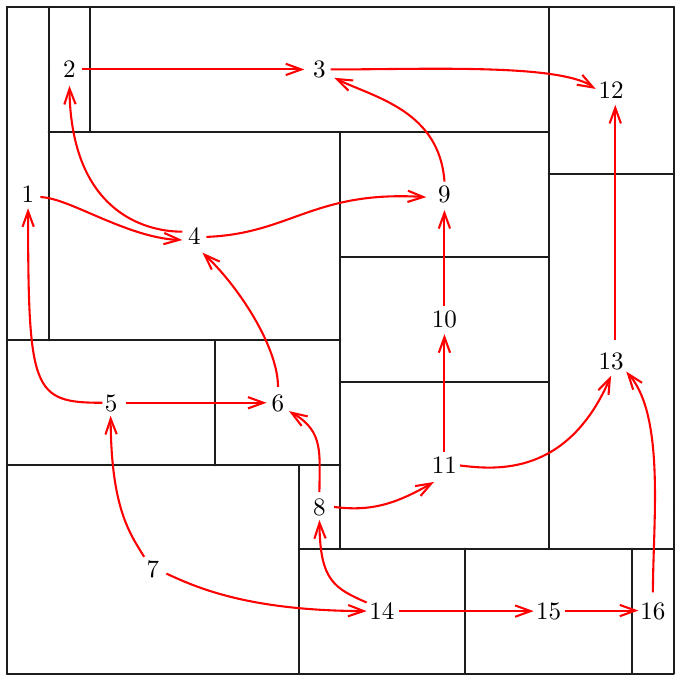} \qquad \includegraphics[width=.4\textwidth]{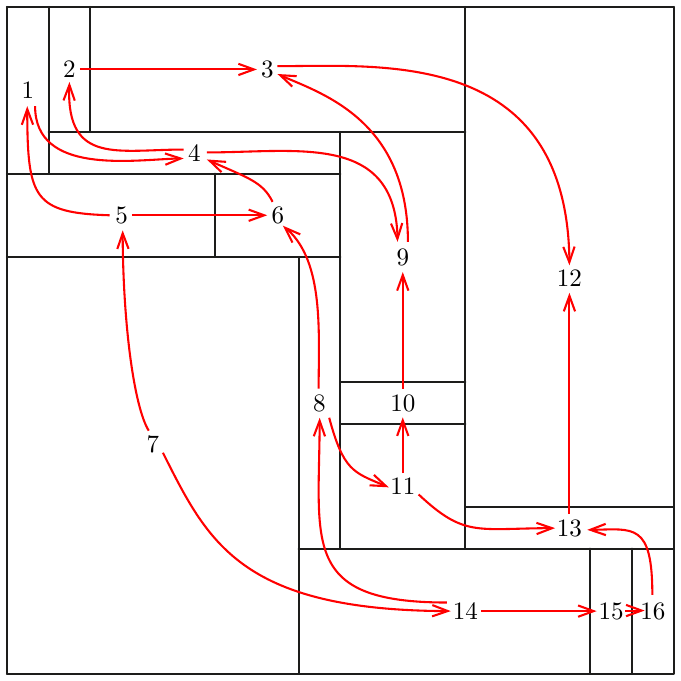}}
	\caption{The weak poset of the rectangulation of \cref{fig:strongRectangulation}, visualized on the rectangulation of \cref{fig:strongRectangulation} (left) and on its diagonal rectangulation of \cref{fig:weakRectangulation} (right). Example from \cite{ACFF24}.}
	\label{fig:weakPoset}
\end{figure}


\pagebreak
\subsection{The weak rectangulation congruence}
\label{subsec:weakRectangulationCongruence}

We consider the set~$\mathcal{L}(\prec_w^R)$ of linear extensions of the weak poset~$\prec_w^R$:
\[
\mathcal{L}(\prec_w^R) \eqdef \bigset{\sigma\in\f{S}_n }{ i\prec_w^R j\implies \sigma^{-1}_i < \sigma^{-1}_j}.
\]

We recall that \defn{Baxter permutations}~\cite{MR0491652,MR0555815} are permutations avoiding the vincular patterns $2\underline{41}3$ and $3\underline{14}2$, where the underlining indicates that the symbols must appear contiguously in the permutation.
Similarly, \defn{twisted Baxter permutations} are permutations avoiding the patterns $2\underline{41}3$ and $3\underline{41}2$, and \defn{co-twisted Baxter permutations} are permutations avoiding the patterns $2\underline{14}3$ and $3\underline{14}2$.
These permutation families are equinumerous and their sizes are the \defn{Baxter numbers}, see \OEIS{A001181}.
They are central in the theory of rectangulations and appear in many other contexts~\cite{MR2763051}.

\begin{theorem}[\cite{MR2871762}]
  \label{thm:weakCong}
  The subsets $\mathcal{L}(\prec_w^R)$ of $\f{S}_n$, for all rectangulations $R$ of size~$n$, 
  are equivalence classes of a congruence relation $\weakeq$ on the weak Bruhat order on $\f{S}_n$.
  Furthermore:
  \begin{itemize}
  \item the congruence classes are one-to-one with weak rectangulations,  
  \item the minimal elements of the congruence classes are the twisted Baxter permutations,
  \item the maximal elements of the congruence classes are the co-twisted Baxter permutations,
  \item the congruence classes each contain a single Baxter permutation. 
  \end{itemize}
\end{theorem}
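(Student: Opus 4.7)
The plan is to follow the original approach of Law and Reading~\cite{MR2871762}, establishing each assertion in turn. First, I would observe that the weak poset $\prec_w^R$ is an invariant of the weak equivalence class of $R$: the source and target trees $S(R)$ and $T(R)$ depend only on the above--below and left--right relations between rectangles (as already noted in \cref{subsec:sourceTargetTrees}), hence so does their transitive union. Two distinct weak rectangulations of size $n$ yield different weak posets, whose linear extensions are necessarily disjoint, so the family $\{\mathcal{L}(\prec_w^R)\}_R$ consists of pairwise disjoint subsets of $\f{S}_n$.

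Second, I would exhibit an explicit rectangulation-insertion map $\sigma \mapsto R(\sigma)$, analogous to the binary search tree insertion underlying the sylvester congruence. Reading the letters of $\sigma$ from left to right and inserting the corresponding rectangles along the anti-diagonal in the only way compatible with the values already placed produces a diagonal rectangulation, and a short induction shows that $\sigma$ is a linear extension of $\prec_w^{R(\sigma)}$. Conversely, every $\sigma \in \mathcal{L}(\prec_w^R)$ must be produced by this insertion from the diagonal representative of $R$, which yields $\mathcal{L}(\prec_w^R) = R^{-1}(R)$ and shows that the classes cover all of $\f{S}_n$.

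Third, to promote this partition to a lattice congruence, I would analyze which adjacent transpositions preserve the fiber: a cover $\sigma \lessdot \sigma \cdot s_i$ in the weak Bruhat order keeps $R(\sigma)$ unchanged precisely when the swap of positions $i$ and $i+1$ is not forced by the weak poset. Unwinding this condition yields local rewriting rules with generators of the Baxter type (interchanging $ca \leftrightarrow ac$ when a letter $b$ with $a<b<c$ appears elsewhere in a suitable position, together with its mirror). The congruence property then follows from the general fan-theoretic criterion of Reading~\cite{MR2142177}, or equivalently by exhibiting the arc ideal $\mathcal{A}_{\weakeq}$ via \cref{subsec:arcIdeals}; in fact $\mathcal{A}_{\weakeq}$ turns out to consist of all up and all down arcs, in agreement with the fact that $\WRP(n)$ attains the full $2^n - 2$ facets recorded in \cref{thm:weakRectangulotope}. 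The pattern characterizations of extremal elements of a class then fall out of the rewriting rules: a minimum admits no upward swap, forbidding $2\underline{41}3$ and $3\underline{41}2$; dually a maximum avoids $2\underline{14}3$ and $3\underline{14}2$; a permutation that is simultaneously minimal and maximal avoids all four vincular patterns, which is exactly the defining condition for a Baxter permutation, giving both existence and uniqueness of a Baxter representative in each class.

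The main obstacle will be the second step: defining the insertion map and proving the fiber identification $\mathcal{L}(\prec_w^{R(\sigma)}) = R^{-1}(R(\sigma))$ requires a careful inductive argument that, at each insertion step, the new rectangle lands in the position dictated by every permutation in the fiber. Once this bijection between classes and weak rectangulations is in place, the remaining assertions reduce to a bookkeeping exercise on adjacent transpositions and vincular patterns, which I expect to proceed routinely.
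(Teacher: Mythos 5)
The paper does not prove \cref{thm:weakCong}; it cites it directly from Law and Reading~\cite{MR2871762}, so there is no in-paper proof to compare against. Your sketch is a plausible reconstruction of the Law--Reading route (weak posets, insertion map, congruence via arc ideals or Reading's criterion), and most of it is sound in outline. Two points need attention, though. First, a minor one: in your opening step, the claim that different weak posets have ``necessarily disjoint'' sets of linear extensions is false as a general statement about posets; the disjointness of the fibers is in fact what your insertion map $\sigma \mapsto R(\sigma)$ is designed to establish, so you should not assert it beforehand.

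The more serious gap is your argument for the Baxter representative. You write that ``a permutation that is simultaneously minimal and maximal avoids all four vincular patterns, which is exactly the defining condition for a Baxter permutation.'' This is wrong on both counts. A Baxter permutation is defined by avoiding $2\underline{41}3$ and $3\underline{14}2$ (one pattern from each mirror pair), \emph{not} all four; avoiding all four would make the class a singleton, which fails already for $n=3$. Moreover, the Baxter permutation in a class is generally strictly between the class minimum (a twisted Baxter permutation) and the class maximum (a co-twisted Baxter permutation), so it cannot be characterized as ``simultaneously minimal and maximal.'' Establishing that each class of $\weakeq$ contains a unique Baxter permutation requires a separate structural argument about the interval $[\sigma_{\min}, \sigma_{\max}]$ in the weak Bruhat order --- for instance, tracking where the descent/ascent patterns $2\underline{41}3$ versus $2\underline{14}3$ can flip as one moves through the interval, as Law and Reading do. As it stands, this step of your proposal would fail.
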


We call this congruence the \defn{weak rectangulation congruence} (it is sometimes referred to as the \defn{Baxter congruence}).
It is the intersection of the sylvester congruence with the anti-sylvester congruence.
The \defn{weak rectangulotope} $\WRP(n)$ is the quotientope of the weak rectangulation congruence $\weakeq$ on the weak Bruhat order on $\f{S}_n$.

By construction, the quotient fan~$\c{F}_\weakeq$ has a maximal cone~$\polytope{C}_w^R$ for each weak rectangulation~$R$ of size~$n$.
This cone is given by~$\polytope{C}_w^R = \set{\b{x} \in \R^n}{x_i \le x_j \text{ if } i \prec_w^R j}$. 
Note that there is a bijection between the facets of~$\polytope{C}_w^R$, the cover relations of~$\prec_w^R$, and the feasible flips in~$R$.
The definition of the weak poset just translates the fact that the weak rectangulation fan~$\c{F}_\weakeq$ is the common refinement of the sylvester fan and the anti-sylvester fan.
Namely, as~$\polytope{C}_w^R$ is the intersection of the cone of the sylvester fan corresponding to the target tree~$T(R)$ with the cone of the anti-sylvester fan corresponding to the source tree~$S(R)$, the weak poset~$\prec_w^R$ is the transitive closure of the union of the posets associated to~$S(R)$ and~$T(R)$.

Note that weak rectangulotopes were first constructed by S.~Law and N.~Reading in~\cite{MR2871762} as Minkowski sums of two opposite associahedra.
Our vertex and facet descriptions of \cref{thm:weakRectangulotope} can be directly derived from this construction.
Here, we provide a more pedestrian approach based on Minkowski sums of shard polytopes, as it will serve as a template for the more involved proof of \cref{thm:strongRectangulotope} in \cref{sec:strongRectangulotopes}.


\subsection{Up and down arcs}
\label{subsec:upDownArcs}

The arc ideal of the weak rectangulation congruence was already considered in~\cite[Exm.~4.10]{Reading-arcDiagrams}.

\begin{lemma}[{\cite[Exm.~4.10]{Reading-arcDiagrams}}]
The weak rectangulation congruence $\weakeq$ on $\f{S}_n$ is defined by the arc ideal composed of arcs that do not cross the horizontal line.
\end{lemma}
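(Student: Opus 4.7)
The arcs that do not cross the horizontal axis are precisely the \emph{up arcs} $(a,b,{]a,b[},\varnothing)$ and the \emph{down arcs} $(a,b,\varnothing,{]a,b[})$, since any arc with both $A \ne \varnothing$ and $B \ne \varnothing$ wiggles between above and below. So the statement reduces to
\[
\c{A}_\weakeq \; = \; \{\text{up arcs on } [n]\} \; \cup \; \{\text{down arcs on } [n]\}.
\]
The strategy is to combine the identity $\weakeq \, = \, {\equiv_\textrm{sylv}} \cap {\equiv_\textrm{anti-sylv}}$ recalled just above the lemma with the characterization of $\c{A}_\equiv$ from \cref{subsec:arcIdeals}: an arc $\alpha$ lies in $\c{A}_\equiv$ iff the associated join irreducible $j_\alpha$ is minimal in its $\equiv$-class, equivalently $j_\alpha \not\equiv j_{\alpha,*}$, where $j_{\alpha,*}$ is the unique element that $j_\alpha$ covers in the weak Bruhat order.

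First I would establish $\c{A}_\weakeq = \c{A}_{\equiv_\textrm{sylv}} \cup \c{A}_{\equiv_\textrm{anti-sylv}}$. Since $\weakeq$ is $\equiv_\textrm{sylv} \cap \equiv_\textrm{anti-sylv}$ as relations on $\f{S}_n$, applying this to the cover $j_{\alpha,*} \lessdot j_\alpha$ gives $j_\alpha \weakeq j_{\alpha,*}$ iff $j_\alpha \equiv_\textrm{sylv} j_{\alpha,*}$ and $j_\alpha \equiv_\textrm{anti-sylv} j_{\alpha,*}$; negating this equivalence yields $\alpha \in \c{A}_\weakeq$ iff $\alpha \in \c{A}_{\equiv_\textrm{sylv}}$ or $\alpha \in \c{A}_{\equiv_\textrm{anti-sylv}}$, which is the desired union. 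This is also the Birkhoff-style content of \cref{subsec:arcIdeals}: taking the finest common refinement of two congruences of the weak Bruhat order (their intersection as relations) translates under the bijection ${\equiv} \mapsto \c{A}_\equiv$ to taking the union of their arc ideals.

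Finally I would identify the two summands. The paper already recalls in \cref{subsec:arcIdeals} that the join irreducibles minimal in their sylvester class are precisely the up arcs, so $\c{A}_{\equiv_\textrm{sylv}}$ is the set of up arcs on $[n]$. By the left-to-right symmetry of $\f{S}_n$ -- which exchanges the sylvester and anti-sylvester congruences and, at the level of noncrossing arc diagrams, swaps above with below and hence up arcs with down arcs -- one analogously has $\c{A}_{\equiv_\textrm{anti-sylv}} = \{\text{down arcs on } [n]\}$. Taking the union completes the proof. The only mildly delicate point is the bookkeeping around the contraction criterion -- verifying that intersection of congruences corresponds to union of arc ideals -- but this is immediate once the characterization of $\c{A}_\equiv$ is in hand; beyond that, the argument simply packages the known sylvester case with the left-to-right symmetry.
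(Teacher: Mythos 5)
Your proof is correct, but it takes a genuinely different route from the paper's. The paper argues directly at the level of permutations: invoking the fact from \cref{thm:weakCong} that the minimal elements of the $\weakeq$-classes are the twisted Baxter permutations, it observes that each occurrence of one of the forbidden vincular patterns $2\underline{41}3$ or $3\underline{41}2$ in a permutation produces an arc that crosses the horizontal axis in the arc diagram, so that the twisted Baxter permutations are exactly those whose arc diagrams stay on one side of the axis. You instead exploit the decomposition of $\weakeq$ as the intersection of the sylvester and anti-sylvester congruences, the general principle that intersection of congruences corresponds to union of arc ideals under the bijection of \cref{subsec:arcIdeals} (which you justify correctly via the cover-contraction criterion $\alpha \in \c{A}_\equiv \Leftrightarrow j_\alpha \not\equiv j_{\alpha,*}$), and the identification of the sylvester arc ideal with the up arcs. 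Both arguments are valid; the paper's version is short and self-contained given the pattern-avoidance characterization, and it runs in exact parallel with the strong case (2-clumped permutations and arcs crossing at most once), while yours is more structural and anticipates the Minkowski-sum realization of $\WRP(n)$ as a sum of two opposite associahedra. One small terminology slip: the intersection of two congruences is their \emph{coarsest} common refinement, not the finest (the finest common refinement of two equivalence relations is the equality relation); the mathematical content of your argument is unaffected.
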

\begin{proof}
  We consider the forbidden vincular patterns $2\underline{41}3$ and $3\underline{41}2$ that define the minimal elements of each congruence class of $\weakeq$.
  We observe that every occurrence of a pattern in a permutation corresponds to an occurrence of an arc that cross the horizontal line at least once in the arc diagram of the permutation.
  The twisted Baxter permutations are therefore the permutations whose arc diagrams are composed only of arcs that do not cross the horizontal line.
\end{proof}

We classified these arcs into two families: the up arcs and the down arcs.
For an interval~$I$ of~$[n]$ of size at least~$2$, the \defn{up arc}~$\upArc{I}$ (resp.~\defn{down arc}~$\downArc{I}$) is the arc that starts at~$\min(I)$ and ends at~$\max(I)$, and passes above (resp.~below) all remaining elements of~$I$, that is,
\begin{align*}
\upArc{I} & \eqdef \big( \min(I), \max(I), I \ssm \{\min(I), \max(I)\}, \varnothing \big) \\
\qquad\text{and}\qquad
\downArc{I} & \eqdef \big( \min(I), \max(I), \varnothing, I \ssm \{\min(I), \max(I)\} \big).
\end{align*}
Note that the basic arc~$(i, i+1, \varnothing, \varnothing)$ is both an up and down arc.
The up arcs correspond to the sylvester congruence, while the down arcs correspond to the anti-sylvester congruence.


\subsection{Shard polytopes of up and down arcs}
\label{subsec:upDownShardPolytopes}

From the results of \cref{subsec:quotientopes}, the quotientope is realized by a Minkowski sum of the shard polytopes of the up arcs and the down arcs.
It immediately follows from the definition that these quotientopes are simplices and anti-simplices.

\begin{lemma}
  \label{lem:lodaysp}
  The up shard polytope $\SP(\upArc{I})$ is the translate of the simplex~$\conv \set{ \b{e}_i }{ i\in I }$ by the vector~$- \b{e}_{\max(I)}$.
\end{lemma}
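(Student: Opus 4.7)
The plan is to directly unpack the definition of the shard polytope for an up arc. Set $a = \min(I)$ and $b = \max(I)$, so that $\upArc{I} = (a, b, I \ssm \{a,b\}, \varnothing)$, and in particular $B = \varnothing$. In any $\upArc{I}$-alternating matching $a \le i_1 < j_1 < \dots < i_q < j_q \le b$, each $j_p$ must lie in $\{b\} \cup B = \{b\}$; combined with the strict increase of the $j_p$'s, this immediately forces $q \le 1$, so the only matchings to enumerate are the empty one and those of length two.

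The two cases are then immediate. The empty matching ($q = 0$) contributes the zero characteristic vector, which we rewrite as $\b{e}_b - \b{e}_b$. A matching with $q = 1$ has the form $(i_1, b)$ with $i_1 \in \{a\} \cup A = I \ssm \{b\}$, contributing the characteristic vector $\b{e}_{i_1} - \b{e}_b$. Gathering these, the set of all characteristic vectors is exactly $\set{\b{e}_i - \b{e}_b}{i \in I}$, and so
\[
\SP(\upArc{I}) = \conv\set{\b{e}_i - \b{e}_{\max(I)}}{i \in I} = \conv\set{\b{e}_i}{i \in I} - \b{e}_{\max(I)},
\]
as claimed.

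There is no real obstacle here: the argument is a direct unraveling of the definition of alternating matching specialized to $B = \varnothing$. The only subtle point worth flagging is to remember that the empty matching is admissible (the condition ``$i_p \in \{a\} \cup A$ and $j_p \in \{b\} \cup B$ for all $p \in [q]$'' is vacuously satisfied when $q = 0$), so that its zero contribution, recognized as $\b{e}_{\max(I)} - \b{e}_{\max(I)}$, fills out the missing vertex of the simplex on $\set{\b{e}_i}{i \in I}$ alongside the $|I|-1$ vertices arising from the length-two matchings.
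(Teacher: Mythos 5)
Your proof is correct and does exactly what the paper implicitly intends: the paper states this lemma without proof, noting only that it "immediately follows from the definition," and your argument is the direct unpacking of the definition — $B = \varnothing$ forces $q \le 1$, so the characteristic vectors are precisely $\b{0}$ and $\b{e}_i - \b{e}_{\max(I)}$ for $i \in I \ssm \{\max(I)\}$, which together give $\set{\b{e}_i - \b{e}_{\max(I)}}{i \in I}$. The observation that the empty matching supplies the missing vertex $\b{e}_{\max(I)} - \b{e}_{\max(I)}$ is the right thing to flag.
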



\begin{lemma}
  \label{lem:antilodaysp}
  The down shard polytope $\SP(\downArc{I})$ is the translate of the negative simplex~$\conv \set{ - \b{e}_i }{ i\in I }$ by the vector~$\b{e}_{\min(I)}$.
\end{lemma}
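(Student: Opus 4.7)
The plan is to proceed by a direct computation that mirrors the preceding lemma for $\SP(\upArc{I})$: unfold the definition of $\downArc{I}$-alternating matchings and observe that, because the pass-above set is empty, these matchings are extremely constrained, so the resulting characteristic vectors can be enumerated in closed form.

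Concretely, write $a \eqdef \min(I)$ and $b \eqdef \max(I)$, so that $\downArc{I} = (a, b, \varnothing, I \ssm \{a, b\})$. First, I would analyze the shape of any $\downArc{I}$-alternating matching $a \le i_1 < j_1 < \dots < i_q < j_q \le b$. Since $\{a\} \cup A = \{a\}$, every $i_p$ must equal $a$; together with the strict-increase condition this forces $q \le 1$. Hence there are exactly $|I|$ alternating matchings: the empty one (contributing the characteristic vector $\b{0}$), and, for each $j \in \{b\} \cup B = I \ssm \{a\}$, the length-one matching $(a, j)$ (contributing $\b{e}_a - \b{e}_j$).

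Next, I would compare this list of vertices to the vertex set of the claimed polytope $\b{e}_a + \conv\set{-\b{e}_i}{i \in I}$, which is $\set{\b{e}_a - \b{e}_i}{i \in I}$. The case $i = a$ gives $\b{0}$ and the remaining cases $i \in I \ssm \{a\}$ give precisely the nontrivial vectors found above. The two finite sets coincide, and since $\SP(\downArc{I})$ is by definition the convex hull of the characteristic vectors of $\downArc{I}$-alternating matchings, we conclude that the two convex hulls are equal.

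There is essentially no obstacle here: the statement is a direct unpacking of the definition and is formally dual to the up-arc case treated in the previous lemma (one could alternatively invoke the symmetry $\b{x} \mapsto -\b{x}$ that swaps up arcs with down arcs, together with the natural translation). I would therefore expect the written proof to be a short verification rather than requiring any new idea.
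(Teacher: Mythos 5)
Your proof is correct and is exactly the direct verification the paper has in mind: the paper gives no written proof for this lemma, remarking only that it ``immediately follows from the definition'' that these shard polytopes are simplices and anti-simplices, and your unfolding of $\downArc{I}$-alternating matchings (the pass-above set is empty, so $q\le 1$, giving the $|I|$ characteristic vectors $\b{0}$ and $\b{e}_a-\b{e}_j$ for $j\in I\ssm\{a\}$) is precisely that verification.
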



We refer to \cref{fig:shardPolytopes} for examples of up and down shard polytopes.

\begin{proposition}
  \label{prop:weakMinkowski}
  The weak rectangulotope $\WRP(n)$ is realized by the Minkowski sum of all up and down shard polytopes.
\end{proposition}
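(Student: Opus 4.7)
The plan is to directly apply the general quotientope construction of Padrol, Pilaud, and Ritter~\cite{MR4584712} recalled at the end of \cref{subsec:quotientopes}: for any lattice congruence $\equiv$ of the weak Bruhat order, any Minkowski sum of positive scalings of the shard polytopes $\SP(\alpha)$ for $\alpha \in \c{A}_\equiv$ realizes a quotientope for $\equiv$, and hence has normal fan equal to the quotient fan $\c{F}_\equiv$. Since $\WRP(n)$ is defined as the quotientope of $\weakeq$, it suffices to exhibit a choice of positive scalings of the shard polytopes of arcs in $\c{A}_\weakeq$ whose Minkowski sum is the polytope described in the statement.

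The first step is to identify the arc ideal $\c{A}_\weakeq$ with the collection of up arcs and down arcs introduced in \cref{subsec:upDownArcs}. By the preceding lemma, $\c{A}_\weakeq$ consists of all arcs that do not cross the horizontal line. For any such arc $(a,b,A,B)$, the partition condition $A \sqcup B = {]a,b[}$ combined with the non-crossing property forces either $B = \varnothing$, in which case the arc is $\upArc{[a,b]}$, or $A = \varnothing$, in which case it is $\downArc{[a,b]}$. Conversely, every $\upArc{I}$ and $\downArc{I}$, for $I$ an interval of $[n]$ of size at least $2$, is a non-horizontal-crossing arc and therefore belongs to $\c{A}_\weakeq$. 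Hence the shard polytopes $\SP(\upArc{I})$ and $\SP(\downArc{I})$ for all such intervals $I$ enumerate exactly $\{\SP(\alpha) : \alpha \in \c{A}_\weakeq\}$.

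I would then conclude by invoking the cited theorem with coefficient $1$ on each up and down shard polytope: the resulting Minkowski sum is a quotientope for~$\weakeq$, which by definition is (a realization of) the weak rectangulotope~$\WRP(n)$. No substantial obstacle is expected in this step, since the heavy lifting has been done in~\cite{MR4584712}. The real combinatorial content is deferred to the unpacking of this Minkowski description performed in the remainder of \cref{sec:weakRectangulotopes}, where the explicit vertex and facet formulas of \cref{thm:weakRectangulotope} are derived by combining \cref{lem:lodaysp,lem:antilodaysp} with the additivity of submodular right-hand-side functions under Minkowski sums.
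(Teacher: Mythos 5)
Your proposal is correct and matches the paper's (implicit) argument exactly: the paper also deduces \cref{prop:weakMinkowski} directly from the result of \cite{MR4584712} recalled in \cref{subsec:quotientopes}, after identifying the arc ideal $\c{A}_\weakeq$ (arcs not crossing the horizontal line) with the set of up and down arcs. Your remark that basic arcs appear once as an up arc and once as a down arc — so the corresponding shard polytopes get coefficient $2$ — is harmless, since the cited theorem allows arbitrary positive scalings, and the paper makes this same observation right after the proposition.
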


Since the combinatorial type of a Minkowski sum is invariant to translation of the summands, we can use the simplices defined in \cref{lem:lodaysp,lem:antilodaysp} as a definition of the up and down shard polytopes.
Also note that since the basic arcs~$(i, i+1, \varnothing, \varnothing)$ are both up and down, each basic shard polytope is summed twice in the proposed realization.
This, again, does not change the combinatorial type of the polytope.

\cref{prop:weakMinkowski} in particular recovers the result of~\cite{MR2871762} that $\WRP(n)$ is the sum of two opposite associahedra of J.-L.~Loday~\cite{MR2871762}, corresponding to the sylvester and anti-sylvester congruences.


\subsection{Submodular functions of weak rectangulotopes}
\label{subsec:submodularWeakRectangulotopes}

The description of the weak rectangulotope~$\WRP(n)$ as a Minkowski sum in \cref{prop:weakMinkowski} allows us to easily compute the corresponding submodular function.
The arc ideal of the weak rectangulation congruence contains exactly all up arcs and all down arcs, hence from the result of~\cite{MR4328906} mentioned in \cref{subsec:quotientFans}, all $2^n-2$ rays of the braid fan are preserved in the fan of the weak rectangulotope.
This implies that every nonempty proper subset will define a facet of the weak rectangulotope, and the description of the weak rectangulotope by its submodular function is actually an irredundant facet description.

\begin{lemma}
  The weak rectangulotope $\WRP(n)$ is realized by the following submodular function:
  \[
  f_{\weakeq}(X) \eqdef \#\bigset{ I\text{ interval of } [n] }{ I \not\subseteq X \text{ and } I \not\subseteq [n]\ssm X }.
  \]
\end{lemma}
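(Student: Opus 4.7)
The approach is to combine the Minkowski sum realization from \cref{prop:weakMinkowski} with the explicit simplex descriptions of shard polytopes from \cref{lem:lodaysp,lem:antilodaysp}, and to directly sum up the corresponding submodular functions.

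First, I would observe that since the normal fan (and hence the quotientope realization) is invariant under translating each summand of a Minkowski sum, it suffices to realize $\WRP(n)$ up to an overall translation as the Minkowski sum
\[
\sum_{|I|\geq 2}\conv\set{\b{e}_i}{i \in I} \;+\; \sum_{|I|\geq 2}\conv\set{-\b{e}_i}{i \in I}
\]
of untranslated simplices, where $I$ ranges over intervals of $[n]$ of size at least two. The support function of $\conv\{\b{e}_i : i \in I\}$ evaluated at the indicator vector of $X \subseteq [n]$ is $\max_{i \in I}[i \in X] = [I \not\subseteq [n]\ssm X]$, and that of $\conv\{-\b{e}_i : i \in I\}$ is $-\min_{i \in I}[i \in X] = -[I \subseteq X]$.

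Next, I would invoke the additivity of submodular functions under Minkowski sums of deformed permutahedra (recalled at the end of \cref{sec:quotientopes}) to conclude that this Minkowski sum is described by
\[
F(X) = \sum_{|I|\geq 2}\bigl([I \not\subseteq [n]\ssm X] - [I \subseteq X]\bigr).
\]
A short case analysis on whether $I \subseteq X$, $I \subseteq [n]\ssm X$, or neither, shows that each summand equals $[I \not\subseteq X \text{ and } I \not\subseteq [n]\ssm X]$, and since singleton intervals always lie entirely in $X$ or in $[n]\ssm X$ and therefore contribute $0$, extending the sum over all intervals produces exactly $f_{\weakeq}(X)$.

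The main (and quite mild) obstacle to anticipate is correctly accounting for the translation alluded to in the first step: \cref{prop:weakMinkowski} realizes $\WRP(n)$ with the actual shard polytopes, which are the simplices above translated by $-\b{e}_{\max(I)}$ and $+\b{e}_{\min(I)}$ respectively; these translations accumulate to a global translation of the whole Minkowski sum that preserves both the normal fan and the hyperplane $\sum_i x_i = 0$ (confirmed by checking $F([n]) = f_{\weakeq}([n]) = 0$). Hence the polytope defined by $f_{\weakeq}$ is a bona fide realization of $\WRP(n)$. The irredundancy of this facet description claimed in the paragraph preceding the lemma then follows from~\cite{MR4328906}, using that the arc ideal of $\weakeq$ contains all up and down arcs and therefore preserves all $2^n-2$ rays of the braid fan.
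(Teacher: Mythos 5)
Your proof is correct and takes essentially the same approach as the paper: it realizes $\WRP(n)$ as a Minkowski sum of the simplices of \cref{lem:lodaysp,lem:antilodaysp}, adds the corresponding submodular functions, and simplifies via a short case analysis (with singletons contributing zero). The only cosmetic difference is that the paper dispenses with the translation bookkeeping by invoking the remark following \cref{prop:weakMinkowski} that translating Minkowski summands changes neither the combinatorial type nor the normal fan, whereas you verify this explicitly by noting the accumulated translation lies in the hyperplane $\sum_i x_i = 0$.
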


\begin{proof}
  The submodular function defining the realization of $\WRP(n)$ is the sum of the submodular functions defining the up and down shard polytope in \cref{lem:lodaysp,lem:antilodaysp}.
  The shard polytope~$\SP(\upArc{I})$ of an up arc~$\upArc{I}$ is given by the submodular function
  \[
  f_{\upArc{I}}(X) \eqdef \llbracket X\cap I\not=\emptyset \rrbracket .
  \]
  (Recall that $\llbracket \varphi\rrbracket$ is equal to 1 if $\varphi$ holds, and~$0$ otherwise.)
  Similarly, the shard polytope~$\SP(\downArc{I})$ of a down arc~$\downArc{I}$ is given by the submodular function
  \[
  f_{\downArc{I}}(X) \eqdef - \llbracket I\subseteq X \rrbracket .
  \]
  Their sum
  \[
  f_{\weakeq}(X) = \sum_{I\text{ interval of }[n]} f_{\upArc{I}}(X) + f_{\downArc{I}}(X)
  \]
  is therefore exactly the number of intervals of $I$ that intersect $X$ but are not contained in $X$, hence the number of intervals contained neither in~$X$ nor in its complement.
\end{proof}

This proves the second part of Theorem~\ref{thm:weakRectangulotope}.

\begin{remark}
Note that the result of~\cite{MR4328906} actually implies that all $2^n-2$ rays of the braid arrangement are preserved in the quotient fan~$\c{F}_\equiv$ if and only if~$\equiv$ refines the weak rectangulation congruence~$\weakeq$.
\end{remark}


\subsection{Loday coordinates of weak rectangulotopes}
\label{subsec:LodayWeakRectangulotopes}

For every weak rectangulation $R$, we wish to give the coordinates of a point $\b{p}(R)\in\R^n$ such that $\WRP(n)$ is the convex hull of the points~$\b{p}(R)$ for all weak rectangulations~$R$ of size~$n$.
Recall that in a Minkowski sum of polytopes, the vertex that is extremal in a generic direction is the sum of the vertices of the summands that are extremal in this direction.
We therefore need to understand which vertices of the translates of the shard polytopes defined in \cref{lem:lodaysp,lem:antilodaysp} are extremal for a direction given by a permutation.

\begin{lemma}
  \label{lem:lodaymax}
  Let $\sigma \in \f{S}_n$ and $I$ be an interval of~$[n]$ of size at least~$2$.
  The unique vertex of the up shard polytope~$\SP(\upArc{I})$ (resp.~down shard polytope~$\SP(\downArc{I})$) that is extreme with respect to the direction $\sigma^{-1}$ is $\b{e}_i$ (resp.~$-\b{e}_i$), where $i$ is the maximum (resp.~the minimum) of~$\sigma^{-1}$ in $I$.
\end{lemma}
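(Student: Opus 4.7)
The plan is a direct extremal-vertex computation on the simplices described in \cref{lem:lodaysp,lem:antilodaysp}. Following the remark preceding this lemma, I would replace each shard polytope by the translate provided there, so that~$\SP(\upArc{I})$ is the simplex~$\conv\set{\b{e}_i}{i \in I}$ and $\SP(\downArc{I})$ is the simplex~$\conv\set{-\b{e}_i}{i \in I}$. Each is a simplex with one vertex for each element of~$I$, so every vertex is extreme for some nonempty open cone of directions.

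For the up shard polytope, I would evaluate the linear form~$\b{x} \mapsto \langle \sigma^{-1}, \b{x}\rangle$ at each vertex, obtaining~$\langle \sigma^{-1}, \b{e}_i\rangle = \sigma^{-1}_i$ for~$i \in I$. Since~$\sigma \in \f{S}_n$ is a permutation, the values~$\sigma^{-1}_i$ for~$i \in I$ are pairwise distinct, hence this form attains a unique maximum on the vertex set, realized at the vertex~$\b{e}_{i^*}$ where~$i^*$ is the element of~$I$ at which~$\sigma^{-1}$ achieves its maximum on~$I$. This is exactly the assertion of the lemma for the up case.

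For the down shard polytope, the same computation gives~$\langle \sigma^{-1}, -\b{e}_i\rangle = -\sigma^{-1}_i$, which is uniquely maximized over~$i \in I$ at the element minimizing~$\sigma^{-1}$ on~$I$, yielding the claimed vertex~$-\b{e}_i$.

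There is no real obstacle: the statement is just the standard dot-product extremization on a simplex of standard basis vectors, repackaged in the shard-polytope language. The only point requiring a word is the replacement of the original~$\SP(\upArc{I})$ and~$\SP(\downArc{I})$ by their translates from \cref{lem:lodaysp,lem:antilodaysp}, which is harmless since translating a polytope translates its extreme vertex in any direction by the same vector.
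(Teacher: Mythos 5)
Your proof is correct and takes essentially the same route as the paper: both reduce to the simplex description from the preceding two lemmas (adopting the translation-invariance convention established just after \cref{prop:weakMinkowski}) and then optimize the linear form~$\langle \sigma^{-1}, \cdot\rangle$ over the vertex set, with uniqueness following from the distinctness of the entries of the permutation~$\sigma^{-1}$. You spell out the dot-product computation and the uniqueness point a bit more explicitly than the paper does, but there is no difference in method.
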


\begin{proof}
Recall from \cref{lem:lodaysp,lem:antilodaysp} that the up shard polytope~$\SP(\upArc{I})$ (resp.~down shard polytope~$\SP(\downArc{I})$) is the simplex~$\conv \set{ \b{e}_i }{ i\in I }$ (resp.~the negative simplex~$\conv \set{ - \b{e}_i }{ i\in I }$).
Hence, the optimal vertex of~$\SP(\upArc{I})$ (resp.~$\SP(\downArc{I})$) in direction~$\sigma^{-1}$ is $\b{e}_i$ (resp.~$-\b{e}_i$), where $i$ is the maximum (resp.~the minimum) of~$\sigma^{-1}$ in $I$.
\end{proof}

Note that this extremal vertex should only depend on the weak rectangulation congruence class of $\sigma$.
This, in passing, gives us alternative characterizations of the sylvester congruence~$\frown$, the anti-sylvester congruence~$\smile$, and the weak rectangulation congruence $\weakeq$.
We have~$\sigma \frown \tau$ if and only if $\arg\max_{j\in I} \sigma^{-1}_j=\arg\max_{j\in I} \tau^{-1}_j$ for any interval $I$ of $[n]$.
Similarly, $\sigma \smile \tau$ if and only if $\arg\min_{j\in I} \sigma^{-1}_j=\arg\min_{j\in I} \tau^{-1}_j$ for any interval $I$ of $[n]$.
Finally, $\sigma\weakeq\tau$ if both conditions hold.

The proof of the following lemma actually follows from Loday's realization in~\cref{thm:loday}.
For the sake of completeness, and because we will reuse the same reasoning in \cref{sec:strongRectangulotopes}, we give an elementary proof.

\begin{lemma}
  \label{lem:weakCoord}
  Let $R$ be a rectangulation of size~$n$, with target tree~$T$, and let $\b{p}(R)$ the vertex of~$\WRP(n)$ associated with $R$.
  Given $i\in [n]$, the number of up arcs~$\upArc{I}$ such that $\b{e}_i$ is the extremal vertex of~$\SP(\upArc{I})$ contributing to~$\b{p}(R)$ is
  \[
  \loday{w}_i =  h^{T}_i\cdot v^{T}_i,
  \]
   where $h^T_i$ and $v^T_i$ denote respectively the number of leaves in the horizontal and vertical subtrees of $i$ in the tree $T$.
\end{lemma}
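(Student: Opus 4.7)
The plan is to identify $\loday{w}_i$ with a count of intervals whose lowest common ancestor in~$T$ equals~$i$, and then evaluate that count via J.-L.~Loday's classical product formula.

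First I would apply \cref{lem:lodaymax} to any $\sigma \in \mathcal{L}(\prec_w^R)$: for every interval $I \subseteq [n]$, the unique vertex of $\SP(\upArc{I})$ extremal in direction $\sigma^{-1}$ is $\b{e}_{i^*(I)}$ with $i^*(I) \eqdef \arg\max_{j \in I} \sigma^{-1}_j$. Hence $\loday{w}_i$ counts intervals~$I$ with $i^*(I) = i$.

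The key step is identifying $i^*(I)$ with the lowest common ancestor $\mathrm{LCA}_T(I)$ of~$I$ in~$T$. Since $\sigma$ is in particular a linear extension of $T$ with edges oriented toward its root -- so descendants precede ancestors in $\sigma$ -- the element of $I$ appearing latest in $\sigma$ is the unique element of $I$ that is an ancestor in $T$ of all others, namely $\mathrm{LCA}_T(I)$. Moreover, the inorder labeling of $T$ ensures that this LCA always lies in the interval $I = [a, b]$: otherwise $a$ and $b$ would both lie in a single side-subtree of $\mathrm{LCA}_T(a, b)$, contradicting its minimality as a common ancestor.

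Finally I would count the intervals $I$ with $\mathrm{LCA}_T(I) = i$. Because the horizontal (resp.~vertical) child of any node in~$T$ carries a smaller (resp.~larger) inorder label, as noted in \cref{subsec:sourceTargetTrees}, the horizontal subtree of $i$ is precisely its left inorder subtree and spans the labels $[l_i, i-1]$, while its vertical subtree spans $[i+1, r_i]$, where $[l_i, r_i]$ is the label range of the subtree of~$i$. The intervals $I = [a,b]$ with $\mathrm{LCA}_T(I) = i$ are exactly those satisfying $l_i \le a \le i \le b \le r_i$, giving $(i - l_i + 1)(r_i - i + 1) = h^T_i \cdot v^T_i$ choices, where $h^T_i = (i - l_i) + 1$ and $v^T_i = (r_i - i) + 1$ count the leaves of the two subtrees. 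This is precisely the count behind \cref{thm:loday}\,(i) applied to~$T$.

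The main subtle point is the identification $i^*(I) = \mathrm{LCA}_T(I)$ together with the inorder observation that the LCA of an interval lies in the interval; the rest is routine product counting. The same template -- translate the extremality condition of \cref{lem:lodaymax} into a subtree condition via a linear extension, then count by subtree containment -- will be reused in \cref{sec:strongRectangulotopes} for the strong rectangulotope, where both trees $S$ and $T$ must be tracked simultaneously and pairs of consecutive intervals replace single intervals.
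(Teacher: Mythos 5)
Your proof is correct, and it takes a mildly different (and somewhat cleaner) route than the paper. The paper invokes \cref{lem:lodaymax} and then counts the left and right endpoints of~$I$ separately: it shows directly that the $h^T_i$ leaves of the horizontal subtree of~$i$ parameterize the admissible left endpoints (the boundary leaf gives a node~$\ell$ with $\ell \succ_w^R i$, blocking any further extension of~$I$ to the left), and symmetrically $v^T_i$ for the right. You instead reduce the extremality condition $i = \arg\max_{j \in I}\sigma^{-1}_j$ to a purely tree-theoretic condition $\mathrm{LCA}_T(I) = i$, and then count the intervals nesting~$i$ within the subtree label range $[l_i, r_i]$; this is the familiar Loday count and correctly yields $(i - l_i + 1)(r_i - i + 1) = h^T_i v^T_i$. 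Both arguments are equivalent in substance, since the admissible left and right endpoints identified in the paper's proof are exactly the elements of $[l_i, i]$ and $[i, r_i]$ respectively, but your LCA formulation separates the combinatorial identification from the counting, which is arguably more transparent. One step you leave implicit --- that the LCA of~$I$ is in fact an ancestor of every element of~$I$, not just of the endpoints --- deserves a sentence: since the subtree rooted at $\mathrm{LCA}_T(a,b)$ spans an inorder interval containing both $a$ and $b$, it contains all of $[a,b]$. With that noted, the proof is complete, and the observation that the same template transfers (with two trees and consecutive interval pairs) to the strong case matches the paper's structure.
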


\begin{proof}
  Let $\sigma$ be any permutation in $\mathcal{L}(\prec_w^R)$.
  From \cref{lem:lodaymax}, for an $i\in [n]$, a shard polytope~$\SP(\upArc{I})$ contributes to $\b{e}_i$ if and only if $i$ is the index of the maximum of $\sigma^{-1}$ in $I$.
  We claim that the number of intervals $I$ satisfying this condition is the given product.

  We first show that the number of choices for the left endpoint of $I$ is equal to the number $h^T_i$ of leaves in the horizontal subtree of $i$ in $T$.
  Each such leaf determines a left endpoint for $I$, defined as the node of the tree that follows it in an inorder traversal.
  Any such node $\ell$ either lies in the horizontal subtree of $i$, or is $i$ itself.
  Since the weak poset extends the order induced by $T$, we have that either $\ell=i$ or $\ell\prec_w^R i$, hence $\sigma^{-1}_\ell \leq \sigma^{-1}_i$.
  Therefore, each such choice of left endpoint for $I$ does not contradict the fact that $i$ is the index of the maximum of $\sigma^{-1}$ in $I$.
  Now consider the leaf of $T$ that is immediately left of the leftmost leaf of the horizontal subtree of $i$.
  This leaf is associated with the node $\ell$ that comes next in inorder traversal, which must be an ancestor of $i$ in $T$.
  Therefore, we must have $\ell\succ_w^R i$, and $\sigma^{-1}_\ell > \sigma^{-1}_i$, and we cannot extend $I$ past this index on the left.
  This shows that there are exactly $h^T_i$ choices for the choice of the left endpoint of $I$.
  A symmetric argument shows that there are exactly $v^{T}_i$ choices for the right endpoint of $I$.
  The combined number of choices is thus the product~$h^{T}_i\cdot v^{T}_i$.
\end{proof}

A symmetric analogue of \cref{lem:weakCoord} holds for down shard polytopes, where the product contributes to $-\b{e}_i$ instead of $\b{e}_i$, and we consider the tree $S(R)$ instead of the tree $T(R)$.
This concludes the proof of \cref{thm:weakRectangulotope}.


\section{Strong rectangulotopes}
\label{sec:strongRectangulotopes}

This section is devoted to strong rectangulations and the proof of \cref{thm:strongRectangulotope}.


\subsection{The strong poset}
\label{subsec:strongPoset}

Given a rectangulation $R$, we define the \defn{strong poset} $([n],\prec_s^R)$ of $R$ by considering two relations on $[n]$~\cite{ACFF24}:
\begin{enumerate}
\item The \defn{adjacency poset} $([n],\tri)$ of $R$, defined by considering the adjacency graph of the rectangles of $R$ (identified by their inorder labels in $[n]$), orienting its edges from left to right and from bottom to top, and taking the transitive closure of this directed acyclic graph.
\item The \defn{special relation} $([n],\btri)$ of $R$, in which $i\btri j$ if the two rectangles $r_i$ and $r_j$ with respective inorder labels $i$ and $j$ are such that 
	\begin{enumerate}[(i)]
	\item either the right side of $r_j$ and the left side of $r_i$ lie on the same vertical segment, and the bottom--right corner of $r_j$ is above the top--left corner of $r_i$, 
	\item or the top side of $r_j$ and the bottom side of rectangle $r_i$ lie on the same horizontal segment, and the top--left corner of $r_j$ lies on the right of the bottom--right corner~of~$r_i$.
	\end{enumerate}
\end{enumerate}
The strong poset is the transitive closure of the union of the two relations $\tri$ and $\btri$.
See \cref{fig:strongPoset}.
The strong poset is a two-dimensional lattice, whose minimum is the root of~$S(R)$ and whose maximum is the root of~$T(R)$. The following statement will be used later.

\begin{figure}
	\centerline{\includegraphics[width=.4\textwidth]{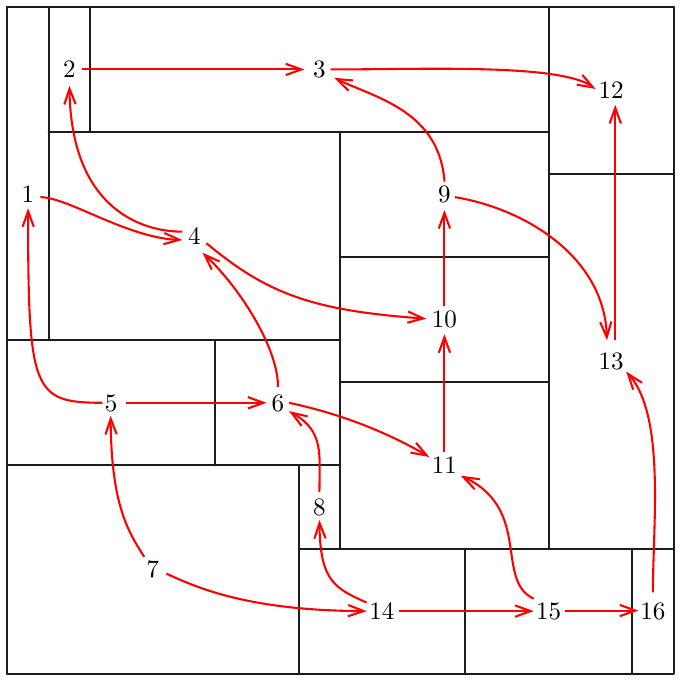} \qquad \includegraphics[width=.4\textwidth]{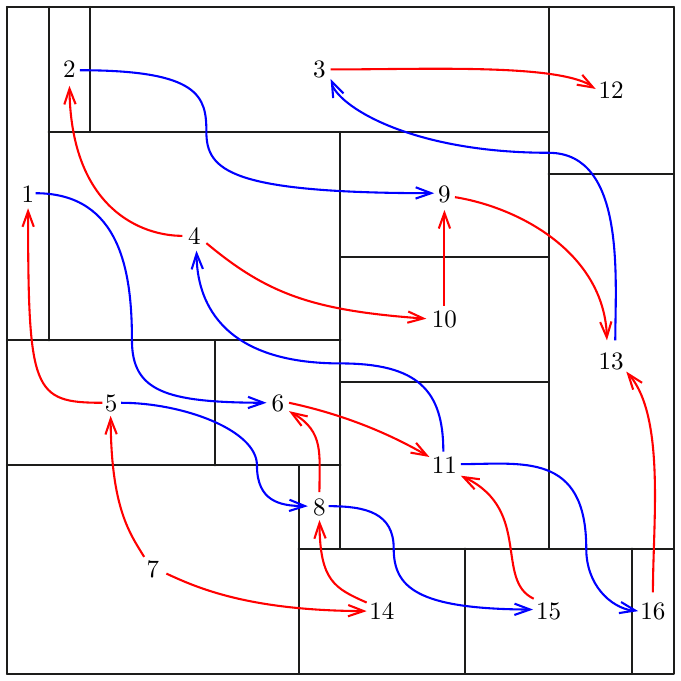}}
	\caption{The adjacency poset (left) and the strong poset (right) of the rectangulation of \cref{fig:strongRectangulation}. Example from \cite{ACFF24}.}
	\label{fig:strongPoset}
\end{figure}

\begin{lemma}
  \label{lem:sextw}
  For any rectangulation $R$, its strong poset $([n], \prec_s^R)$ is an extension of its weak poset~$([n], \prec_w^R)$.
\end{lemma}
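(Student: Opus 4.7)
The plan is to show that every edge of the source tree~$S(R)$ (oriented from root outward) and every edge of the target tree~$T(R)$ (oriented towards the root) is already an edge of the adjacency relation~$\tri$ on~$[n]$. Once this is established, taking transitive closures immediately gives $\prec_w^R \subseteq \prec_s^R$: by the definition of the weak poset recalled in~\cref{subsec:weakPoset}, $\prec_w^R$ is the transitive closure of the edges of~$S(R) \cup T(R)$, and by the definition of the strong poset, $\prec_s^R$ contains the transitive closure of~$\tri$.

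To prove this key claim, I would run a case analysis on the local structure of~$R$ at the vertices~$t(r_j)$ and~$s(r_j)$ for $j \in [n]$. Since~$R$ is generic, each interior vertex is a T-junction of one of the four shapes $\vdash, \dashv, \top, \bot$, with three rectangles meeting at it, and each boundary vertex is an analogous ``half-T''. A direct inspection shows that the top-right corner~$t(r_j)$ of a rectangle~$r_j$ can only occur at a T-junction of type~$\dashv$ or~$\top$ (or a boundary analogue), and that in both cases~$r_j$ plays the role of the lower-left rectangle of the junction. In the~$\dashv$ case, the unique edge of~$D(R)$ leaving~$t(r_j)$ is the right side of the upper-left rectangle~$r_a$, so the parent of~$j$ in~$T(R)$ is~$a$; moreover~$r_j$ lies below~$r_a$ along the horizontal crossbar of the T, hence~$j \tri a$. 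In the~$\top$ case, the unique such edge is the top side of the lower-right rectangle~$r_m$, so the parent of~$j$ in~$T(R)$ is~$m$; moreover~$r_j$ lies to the left of~$r_m$ along the vertical stem, hence~$j \tri m$.

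A symmetric inspection at~$s(r_j)$ shows that this vertex can only occur at a T-junction of type~$\vdash$ or~$\bot$, always with~$r_j$ playing the role of the upper-right rectangle, and that the unique edge of~$D(R)$ entering~$s(r_j)$ from another source comes from the rectangle directly below~$r_j$ (in the~$\vdash$ case) or directly to the left of~$r_j$ (in the~$\bot$ case); this again produces an edge of~$\tri$ into~$j$. Thus every parent--child edge of either tree is also an edge of~$\tri$, and the conclusion follows by transitive closure. The only real obstacle is the bookkeeping: one has to identify, in each of the four T-junction shapes, which rectangle has a corner at the given position and which of its sides contributes the outgoing (or incoming) tree edge; once this is done, nothing else is required.
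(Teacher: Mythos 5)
Your proof is correct, and it takes a genuinely different route from the paper's. The paper uses the second (equivalent) characterization of $\prec_w^R$ as the adjacency poset of a diagonal representative~$D$ of the weak class of~$R$, and argues that the wall slides transforming~$D$ into~$R$ can only add comparable pairs to the adjacency poset, so ${\prec_w^R} \subseteq {\tri} \subseteq {\prec_s^R}$. You instead work directly with the first characterization of~$\prec_w^R$ as the transitive closure of the orientations of $S(R)$ and $T(R)$, and show by local inspection at generic T-junctions that every parent--child edge of either tree is a cover relation of~$\tri$, which gives the same chain of inclusions. Your case analysis is sound: $t(r_j)$ sits at a $\dashv$ or $\top$ junction with $r_j$ as the lower-left rectangle, and the unique out-edge of $D(R)$ at that vertex is the right (resp.\ top) side of the upper-left (resp.\ lower-right) rectangle, which is adjacent to $r_j$ from above (resp.\ from the right), so the parent edge in $T(R)$ is an oriented adjacency; the dual analysis at $\vdash$ and $\bot$ junctions handles $S(R)$, and the orientations ($r_j$ below or left of its parent/child) all align with~$\tri$. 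What you gain is a self-contained argument that does not invoke the equivalence between the tree description and the diagonal-representative description of $\prec_w^R$, nor the effect of wall slides on adjacency; what you pay is the explicit four-case inspection plus the boundary ``half-T'' cases, which you rightly flag as routine bookkeeping. One tiny wording nit: you call these ``edges of the adjacency relation~$\tri$'' where, strictly speaking, $\tri$ is already the transitive closure; you mean cover relations of~$\tri$, i.e.\ oriented adjacencies, but the conclusion $\prec_w^R \subseteq \tri$ is unaffected.
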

\begin{proof}
  Clearly, the strong poset is an extension of the adjacency poset $([n],\tri)$ of $R$, since it is obtained by adding pairs from the special relation~$([n],\btri)$.
  It remains to observe that the adjacency poset of $R$ is an extension of the weak poset of $R$ (compare for instance \cref{fig:weakPoset}\,(left) and \cref{fig:strongPoset}\,(left)).
  For this, we consider a diagonal representative~$D$ of the weak equivalence class of $R$, and recall that the weak poset of $R$ is the adjacency poset of $D$.
  By definition, $R$ can be obtained from $D$ by a sequence of wall slides.
  Each wall slide in this sequence consists of either a horizontal segment being pulled down past another horizontal segment on its right, or a vertical segment being pushed right past another vertical segment below it.
  (These are the flips described in~\cref{fig:wallSlides}, performed in bottom to top order on the figure.) 
  Note that each of these wall slides can only add new pairs in the adjacency poset, hence the adjacency poset of $R$ is always an extension of that of $D$. 
\end{proof}


\subsection{The strong rectangulation congruence}
\label{subsec:strongRectangulationCongruence}

We consider the set~$\mathcal{L}(\prec_s^R)$ of linear extensions of the strong poset~$\prec_s^R$:
\[
\mathcal{L}(\prec_s^R) \eqdef \bigset{\sigma\in\f{S}_n }{ i\prec_s^R j\implies \sigma^{-1}_i < \sigma^{-1}_j}.
\]

N. Reading introduced \defn{2-clumped permutations}~\cite{MR2864445}, defined as the permutations avoiding the vincular patterns $24\underline{51}3$, $42\underline{51}3$, $3\underline{51}24$, and $3\underline{51}42$.
The \defn{co-2-clumped permutations} are defined similarly, as permutations avoiding the mirror images of these four patterns.
These two permutation families are equinumerous, see \OEIS{A342141}.
Reading showed that 2-clumped permutations are in bijection with strong rectangulations.

\begin{theorem}[\cite{MR2864445,ACFF24}]
  \label{thm:strongCong}
  The subsets $\mathcal{L}(\prec_s^R)$ of $\f{S}_n$, for all rectangulations $R$ of size~$n$, are equivalence classes of a congruence relation $\strongeq$ on the weak Bruhat order on $\f{S}_n$.
    Furthermore:
  \begin{itemize}
  \item the congruence classes are one-to-one with strong rectangulations,  
  \item the minimal elements of the congruence classes are the 2-clumped permutations,
  \item the maximal elements of the congruence classes are the co-2-clumped permutations.
  \end{itemize}
\end{theorem}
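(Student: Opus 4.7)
The plan is to follow the template of Section~\ref{sec:weakRectangulotopes}: identify an arc ideal $\cA_\strongeq$ whose associated congruence, under the correspondence recalled in Section~\ref{subsec:arcIdeals}, realises $\strongeq$, and then match its classes with the sets $\mathcal{L}(\prec_s^R)$. Once $\cA_\strongeq$ is pinned down, it is automatic that $\strongeq$ is a lattice congruence and that its classes are in bijection with noncrossing arc diagrams drawn using only arcs of $\cA_\strongeq$; the remaining work is to match these diagrams with strong rectangulations and to verify the descriptions of the minimal and maximal elements.

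First I will translate the four vincular patterns $24\underline{51}3$, $42\underline{51}3$, $3\underline{51}24$, $3\underline{51}42$ defining 2-clumped permutations into forbidden descent arcs. In each pattern the underlined digits form the descent of a five-element window, and the three surrounding digits are values strictly between the descent endpoints, distributed to the left (set $A$) or right (set $B$) of the descent. A short case analysis shows that these patterns occur precisely when the associated descent arc $(a,b,A,B)$ contains either $x<z<y$ with $x,y\in A$ and $z\in B$, or $x<z<y$ with $x,y\in B$ and $z\in A$. Hence $\cA_\strongeq$ consists exactly of the arcs $(a,b,A,B)$ for which $A$ and $B$ are separated intervals of~$]a,b[$: either $A=\varnothing$ (down arc), $B=\varnothing$ (up arc), $\max(A)<\min(B)$ (yin arc), or $\max(B)<\min(A)$ (yang arc). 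This family is visibly closed under the subarc relation, so it is an arc ideal, and Reading's theory yields a lattice congruence of the weak Bruhat order whose minimal and maximal class representatives are, respectively, the 2-clumped and co-2-clumped permutations.

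Next I will match the classes of this congruence with strong rectangulations. I will build a map $\Phi$ sending each strong rectangulation~$R$ to a noncrossing arc diagram with one arc per internal segment: segments adjacent to a single rectangle on one side yield up or down arcs, while segments with a staircase-like adjacency pattern produced by a wall-slide yield yin or yang arcs; the endpoints of each arc are the inorder labels of the extremal adjacent rectangles and the sets $A$, $B$ record the labels of the remaining adjacent rectangles. I will prove $\Phi$ is a bijection by reconstructing $R$ inductively from its arc diagram, peeling off one segment at a time. To conclude that $\mathcal{L}(\prec_s^R)$ coincides with the class of $\Phi(R)$, I will induct on the number of descents, using the cover-relation description of the strong poset from Section~\ref{subsec:strongPoset}, together with Lemma~\ref{lem:sextw} to lift the analogous identification already established for weak rectangulations in Theorem~\ref{thm:weakCong}.

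The main technical obstacle is this last bijective argument, and in particular the handling of yin and yang arcs, which have no counterpart in the weak case. A wall-slide flip exchanges one such arc for another (or swaps between a yin/yang and an up/down arc), and verifying that $\Phi$ is compatible with every flip type requires careful local case analysis of the rectangles incident to the affected segments. Once this is done, the identifications of minimal and maximal elements follow immediately from the first step, since 2-clumped (respectively, co-2-clumped) permutations are precisely those whose canonical join (respectively, meet) representations use only arcs of~$\cA_\strongeq$.
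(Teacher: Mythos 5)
The paper does not prove this theorem: it cites N.~Reading's work on generic rectangulations \cite{MR2864445} and \cite{ACFF24}, and the subsequent lemma (attributed to \cite[Exm.~4.11]{Reading-arcDiagrams}) even \emph{uses} the theorem's conclusion that 2-clumped permutations are minimal class representatives when deriving the arc ideal of $\strongeq$. Your first step --- translating the four vincular patterns into arcs crossing the horizontal axis at least twice, hence identifying $\cA_\strongeq$ with the arcs crossing at most once --- is correct and is essentially the content of that later lemma; with it in hand, Reading's theory of arc ideals does give a lattice congruence whose minimal (resp.\ maximal) class representatives are the 2-clumped (resp.\ co-2-clumped) permutations.

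The substantive remaining part --- that the classes of this congruence are exactly the sets $\mathcal{L}(\prec_s^R)$, in bijection with strong rectangulations --- is where your sketch breaks down. You propose a map $\Phi$ sending each strong rectangulation to a noncrossing arc diagram ``with one arc per internal segment'' and then reconstructing $R$ by peeling off segments. That cannot be right: every generic rectangulation of size $n$ has exactly $n-1$ segments, while the number of arcs in the diagram of the minimal (2-clumped) permutation of the class is its number of descents, which ranges from $0$ (the identity, for the bottom rectangulation) up to $n-1$. Arcs encode the canonical join representation of the minimal permutation, i.e.\ the lower covers in the quotient lattice, equivalently the flips that decrease $R$ --- not the segments --- so the counts and combinatorics simply do not match, and neither the peeling reconstruction nor the flip-compatibility case analysis built on this $\Phi$ would go through. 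The bijection has to be organised around descents and covers (or the NW--SE insertion scheme of \cite{MR2864445,ACFF24}), not around segments.
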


We call the congruence $\strongeq$ the \defn{strong rectangulation congruence}.
The \defn{strong rectangulotope}~$\SRP(n)$ is the quotientope of the strong rectangulation congruence~${\strongeq}$.

Similarly to the weak case, the quotient fan~$\c{F}_{\strongeq}$ has a maximal cone~$\polytope{C}_s^R$ for each strong rectangulation~$R$ of size~$n$.
This cone is given by~$\polytope{C}_s^R = \set{\b{x} \in \R^n}{x_i \le x_j \text{ if } i \prec_s^R j}$.
Note that there is a bijection between the facets of~$\polytope{C}_s^R$, the cover relations of~$\prec_s^R$, and the feasible flips in~$R$.


\subsection{Yin and yang arcs}
\label{subsec:yinYangArcs}

The arc ideal of the strong rectangulation congruence was already considered in~\cite[Exm.~4.11]{Reading-arcDiagrams}.

\begin{lemma}[{\cite[Exm.~4.11]{Reading-arcDiagrams}}]
  The strong rectangulation congruence $\strongeq$ on $\f{S}_n$ is defined by the arc ideal composed of arcs that cross the horizontal line at most once.
\end{lemma}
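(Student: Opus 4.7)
The plan is to prove the lemma through the bijection between single arcs and join-irreducible permutations of the weak Bruhat order. Recall from \cref{subsec:arcIdeals} that the arc ideal $\c{A}_\strongeq$ is precisely the set of arcs corresponding to the join-irreducible permutations that are minimal in their $\strongeq$-class. By \cref{thm:strongCong}, the minimal elements of the $\strongeq$-classes are the 2-clumped permutations. Hence the task reduces to characterizing which join-irreducibles are 2-clumped and showing this coincides with the ``at most one crossing'' condition on arcs.

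First, I would recall the standard description of join-irreducibles. A permutation $\sigma \in \f{S}_n$ is join-irreducible in the weak Bruhat order if and only if it has exactly one descent, say at position $j$. Setting $b \eqdef \sigma_j$ and $a \eqdef \sigma_{j+1}$, the interior $]a,b[$ of the corresponding arc splits into $A$ (values in $]a,b[$ lying at positions $< j$) and $B$ (values in $]a,b[$ lying at positions $> j+1$), and the arc of $\sigma$ is $(a,b,A,B)$. Conversely, every arc arises this way.

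The heart of the argument is then to inspect each of the four vincular patterns $24\underline{51}3$, $42\underline{51}3$, $3\underline{51}24$, $3\underline{51}42$ for a join-irreducible $\sigma$. Because $\sigma$ has only one descent, the underlined pair $\underline{51}$ of any occurrence must be matched to $(\sigma_j,\sigma_{j+1})=(b,a)$. I would then argue:
\begin{itemize}
\item The patterns $42\underline{51}3$ and $3\underline{51}42$ each contain a second descent (either $42$ before $\underline{51}$, or $42$ after $\underline{51}$), which contradicts join-irreducibility, so they cannot appear.
\item The pattern $24\underline{51}3$ translates to the existence of two values $\alpha<\beta$ in $A$ with some $\gamma\in B$ satisfying $\alpha<\gamma<\beta$ (i.e.\ a $B$-value between two $A$-values).
\item Symmetrically, the pattern $3\underline{51}24$ translates to the existence of two values in $B$ with an $A$-value strictly between them.
\end{itemize}
Avoiding both remaining patterns is therefore equivalent to the values of $A$ and $B$ being non-interleaving, which means there is a threshold $t\in[a,b-1]$ such that either $A\subseteq{]a,t]}$ and $B\subseteq{]t,b[}$, or $A\subseteq{]t,b[}$ and $B\subseteq{]a,t]}$. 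This is exactly the condition that the arc $(a,b,A,B)$ crosses the horizontal axis at most once (the four sub-cases being up arcs, down arcs, yin arcs, and yang arcs).

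I do not expect a significant obstacle here: the argument is essentially a bookkeeping exercise once the matching of the underlined descent in each pattern to the unique descent of the join-irreducible is fixed. The only mildly delicate point is verifying that patterns $42\underline{51}3$ and $3\underline{51}42$ really force an extra descent; this follows because their prefix or suffix contributes a descending pair (the $42$) disjoint from the $\underline{51}$, so any occurrence forces $\sigma$ to have at least two descents, contradicting join-irreducibility.
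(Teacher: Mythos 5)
Your proof is correct and uses the same core idea as the paper: translate the four vincular patterns defining 2\nobreakdash-clumped permutations into interleaving of $A$- and $B$-values in the corresponding arc, which is exactly the condition of crossing the horizontal line at least twice. The paper's own proof is a one-line ``we observe'' version of this; your restriction to join-irreducibles (where the underlined descent must be \emph{the} descent) is a valid and slightly cleaner angle, and your observation that $42\underline{51}3$ and $3\underline{51}42$ cannot even occur in a single-descent permutation — leaving only $24\underline{51}3$ and $3\underline{51}24$ to analyze — is a nice refinement that the paper leaves implicit.
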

\begin{proof}
  We consider the forbidden vincular patterns $24\underline{51}3$, $42\underline{51}3$, $3\underline{51}24$, and $3\underline{51}42$ that define the 2-clumped permutations, the minimal elements of each congruence class of $\strongeq$.
  We observe that every occurrence of a pattern in a permutation corresponds to an occurrence of an arc that crosses the horizontal line at least twice in the arc diagram of the permutation.
  The 2-clumped permutations are therefore the permutations whose arc diagrams are composed only of arcs that cross the horizontal line at most once.
\end{proof}

We classify these arcs into two families, depending on whether the arcs do not cross the horizontal line from below or from above.
For two consecutive intervals~$I$ and~$J$ of~$[n]$, the \defn{yin arc}~$\yinArc{I}{J}$ (resp.~\defn{yang arc}~$\yangArc{I}{J}$) is the arc that starts at~$\min(I)$ and ends at~$\max(J)$, and goes above (resp.~below) the remaining elements of $I$ and below (resp.~above) the remaining elements of $J$, that is,
\begin{align*}
\yinArc{I}{J} & \eqdef \big( \min(I), \max(J), I \ssm \{\min(I)\}, J \ssm \{\max(J)\} \big) \\
\qquad\text{and}\qquad
\yangArc{I}{J} & \eqdef \big( \min(I), \max(J), J \ssm \{\max(J)\}, I \ssm \{\min(I)\} \big).
\end{align*}
Note that all up arcs and down arcs are both yin arcs and yang arcs.

Note that the yin (resp.~yang) arcs alone form an arc ideal, and we call \defn{yin} (resp.~\defn{yang}) \defn{congruence} the corresponding congruence of the weak order.
It can be checked that the equivalence classes of these congruences are counted by \defn{semi-Baxter permutations}, defined as permutations avoiding $2\underline{41}3$, hence only one of the two forbidden Baxter patterns.
These equivalence classes are in turn in bijection with rectangulations that avoid one of the two patterns defining diagonal rectangulations, see~\cite{ACFF24}.
The strong rectangulation congruence is the intersection of the yin and yang congruences.


\subsection{Shard polytopes of yin and yang arcs}
\label{subsec:yinYangShardPolytopes}

From the results of \cref{subsec:quotientopes}, the quotientope is realized by a Minkowski sum of the shard polytopes of the yin arcs and the yang arcs.
These shard polytopes can be described as follows.

\begin{lemma}
  \label{lem:yinsp}
  The yin shard polytope $\SP(\yinArc{I}{J})$ is $\conv \{\b{0}\} \cup \set{\b{e}_i - \b{e}_j}{i \in I, \; j \in J}$.
\end{lemma}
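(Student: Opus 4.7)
The plan is to unfold the definitions. By definition, $\SP(\yinArc{I}{J})$ is the convex hull of the characteristic vectors of all $\alpha$-alternating matchings, where $\alpha \eqdef \yinArc{I}{J} = (\min(I), \max(J), I \ssm \{\min(I)\}, J \ssm \{\max(J)\})$. Note that $\{a\} \cup A = I$ and $\{b\} \cup B = J$, so an $\alpha$-alternating matching is a sequence
\[
i_1 < j_1 < i_2 < j_2 < \dots < i_q < j_q
\]
with $i_p \in I$ and $j_p \in J$ for all $p \in [q]$. The empty sequence ($q = 0$) is always allowed and yields the characteristic vector $\b{0}$.

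The key (and only) observation is that $I$ and $J$ are consecutive intervals with $\max(I) = \min(J) - 1$, so every element of $I$ is strictly less than every element of $J$. Thus, if $q \ge 2$, we would need $j_1 < i_2$ with $j_1 \in J$ and $i_2 \in I$, contradicting $I < J$. Hence every nonempty $\alpha$-alternating matching consists of a single pair $(i, j)$ with $i \in I$ and $j \in J$ (and the condition $i < j$ is automatic), whose characteristic vector is $\b{e}_i - \b{e}_j$.

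Putting these two cases together gives the desired vertex list, and taking the convex hull yields the claimed description of $\SP(\yinArc{I}{J})$. There is no real obstacle here; the statement is a direct unpacking of the shard polytope definition combined with the consecutiveness of $I$ and $J$. (We do not need to verify that each listed point is actually a vertex of the convex hull, as the lemma only asserts the equality of polytopes; however, it is easily seen from a linear functional argument that each of $\b{0}$ and $\b{e}_i - \b{e}_j$ is extreme.)
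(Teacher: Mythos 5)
Your proof is correct and follows the straightforward definition‑unpacking argument. The paper states this lemma (and its companion \cref{lem:yangsp}) without proof, evidently treating it as immediate from the definitions of the yin arc and of the shard polytope together with \cref{fig:shardPolytopes}; your argument is precisely what is implicit there. The key point you isolate — that because $I$ and $J$ are consecutive intervals with $\max(I)<\min(J)$, any $\yinArc{I}{J}$-alternating matching has at most one pair (there is no room for $j_1<i_2$ with $j_1\in J$, $i_2\in I$), and every pair $(i,j)\in I\times J$ automatically satisfies $i<j$ — is exactly the right observation, and it cleanly yields that the set of characteristic vectors is $\{\b{0}\}\cup\set{\b{e}_i-\b{e}_j}{i\in I,\,j\in J}$. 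Your closing remark that the lemma only asserts equality of convex hulls (so checking vertexhood is unnecessary) is also accurate.
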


\begin{lemma}
  \label{lem:yangsp}
  The yang shard polytope $\SP(\yangArc{I}{J})$ is the translate of~$\conv \{\b{0}\} \cup \set{\b{e}_j - \b{e}_i}{i \in I,\; j \in J}$ by the vector~$\b{e}_{\min(I)} - \b{e}_{\max(J)}$.
\end{lemma}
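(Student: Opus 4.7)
The plan is to prove this identity by directly enumerating the $\yangArc{I}{J}$-alternating matchings and matching them, via an explicit bijection, to the $|I|\cdot|J|+1$ points defining the translated polytope, in such a way that the characteristic vector of each matching equals the translate of the corresponding point by $\b{e}_{\min(I)} - \b{e}_{\max(J)}$.

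Write $I = \{a_1 < \cdots < a_k\}$ and $J = \{b_1 < \cdots < b_\ell\}$, so that $\yangArc{I}{J} = (a_1,\, b_\ell,\, J \ssm \{b_\ell\},\, I \ssm \{a_1\})$ and any alternating matching $i_1 < j_1 < \cdots < i_q < j_q$ satisfies $i_p \in \{a_1\} \cup \{b_1, \ldots, b_{\ell-1}\}$ and $j_p \in \{a_2, \ldots, a_k\} \cup \{b_\ell\}$. The key structural observation is that $a_1$ is the only element of $I$ eligible as some $i_p$ and $b_\ell$ the only element of $J$ eligible as some $j_p$; combined with strict monotonicity and the fact that every element of $I$ precedes every element of $J$, this forces every yang alternating matching to fall into exactly one of five shapes: the empty matching; $(a_1, a_r)$ for some $r \in \{2, \ldots, k\}$; $(a_1, b_\ell)$; $(b_s, b_\ell)$ for some $s \in \{1, \ldots, \ell-1\}$; or $(a_1, a_r, b_s, b_\ell)$ for $r \in \{2, \ldots, k\}$ and $s \in \{1, \ldots, \ell-1\}$.

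Counting yields $1 + (k-1) + 1 + (\ell-1) + (k-1)(\ell-1) = k\ell+1$, which matches the cardinality of $\{\b{0}\} \cup \set{\b{e}_j - \b{e}_i}{i \in I,\, j \in J}$. I then define the bijection $\phi$ as follows: the empty matching $\mapsto (a_1, b_\ell)$; $(a_1, a_r) \mapsto (a_r, b_\ell)$; $(a_1, b_\ell) \mapsto \b{0}$; $(b_s, b_\ell) \mapsto (a_1, b_s)$; and $(a_1, a_r, b_s, b_\ell) \mapsto (a_r, b_s)$. A one-line computation in each case confirms that the characteristic vector of the matching equals the translate of $\phi(\text{matching})$ by $\b{e}_{a_1} - \b{e}_{b_\ell}$ (with the empty matching yielding $\b{0}$, which translates to $\b{e}_{a_1} - \b{e}_{b_\ell}$, exactly the translate of $\b{0}$). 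Taking convex hulls on both sides then finishes the proof.

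The step that requires the most care is the enumeration: one argues that as soon as some $i_p$ lies in $J$, every subsequent term lies strictly above $a_k$, so any $j_p$ thereafter must equal $b_\ell$ and the matching terminates; symmetrically, as soon as some $j_p$ lies in $I \ssm \{a_1\}$, every earlier term lies strictly below $b_1$, so any $i_{p'}$ before it must equal $a_1$, forcing $p = 1$. These two constraints together leave precisely the five shapes listed above. Beyond that, the remainder is a routine bookkeeping check, parallel to (but slightly richer than) the enumeration underlying the companion \cref{lem:yinsp}, where the constraints collapse the matchings to only the empty sequence and the one-pair matchings $(i,j) \in I \times J$.
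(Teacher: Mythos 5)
The paper states this lemma (and its companions \cref{lem:lodaysp}, \cref{lem:antilodaysp}, \cref{lem:yinsp}) without any written proof, relying on the reader to unwind the definition of shard polytope; your direct verification supplies exactly the argument the paper leaves implicit, and it is correct. Your enumeration of the $\yangArc{I}{J}$-alternating matchings is right: writing $I = \{a_1 < \dots < a_k\}$ and $J = \{b_1 < \dots < b_\ell\}$ with $a_k < b_1$, one has $i_p \in \{a_1, b_1, \dots, b_{\ell-1}\}$ and $j_p \in \{a_2, \dots, a_k, b_\ell\}$, the constraint $a_k < b_1$ then kills any $j_p = a_r$ with $p > 1$ and terminates the sequence once $i_p \in J$, and the five shapes you list are exactly what survives. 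The map $\phi$ sends characteristic vectors bijectively onto the translated generators, so the convex hulls agree.

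One small exposition slip: the parenthetical on the empty matching is garbled. It says the empty matching ``yields $\b 0$, which translates to $\b e_{a_1} - \b e_{b_\ell}$, exactly the translate of $\b 0$'' — but $\b 0$ is the characteristic vector, and $\b e_{a_1} - \b e_{b_\ell}$ is the translate of $\b 0 \in \{\b 0\} \cup \{\b e_j - \b e_i\}$, which rather matches the singleton matching $(a_1, b_\ell)$ under $\phi$, not the empty one. The table of $\phi$ itself is correct (empty $\mapsto \b e_{b_\ell} - \b e_{a_1}$, which translates back to $\b 0$); it is only the parenthetical commentary that momentarily swaps the two cases. Worth tidying, but it does not affect the validity of the argument. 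You also prove slightly more than is needed — set equality of characteristic vectors with the translated generators, rather than just equality of convex hulls — but that is harmless and arguably cleaner.
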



We refer to \cref{fig:shardPolytopes} for examples of yin and yang shard polytopes.

\begin{proposition}
  \label{prop:strongMinkowski}
  The strong rectangulotope $\SRP(n)$ is realized by the Minkowski sum of all yin and yang shard polytopes.
\end{proposition}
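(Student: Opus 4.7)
The plan is to invoke the general shard polytope construction of quotientopes of Padrol, Pilaud, and Ritter~\cite{MR4584712}, recalled in \cref{subsec:quotientopes}: for any lattice congruence~$\equiv$ of the weak Bruhat order on~$\f{S}_n$, any Minkowski sum with positive coefficients of the shard polytopes~$\SP(\alpha)$ indexed by the arcs~$\alpha \in \c{A}_{\equiv}$ realizes the quotientope of~$\equiv$. Since~$\SRP(n)$ is by definition the quotientope of the strong rectangulation congruence~$\strongeq$, the proposition reduces to checking that, up to multiplicity, the family of all yin and yang arcs coincides with the arc ideal~$\c{A}_{\strongeq}$.

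The key step is to unpack this identification. By the lemma in \cref{subsec:yinYangArcs}, $\c{A}_{\strongeq}$ consists of the arcs~$(a,b,A,B)$ that cross the horizontal line at most once. Reading the labels of the interior points~$a+1, \dots, b-1$ from left to right (each marked~$A$ or~$B$ according to the set it belongs to), the number of horizontal crossings equals the number of interior sign changes. Hence~$\c{A}_{\strongeq}$ consists exactly of those arcs whose interior label sequence is either a block of~$A$'s followed by a block of~$B$'s, or a block of~$B$'s followed by a block of~$A$'s, with the split occurring at some position~$c \in \{a,\dots,b-1\}$. Setting~$I \eqdef \{a, \dots, c\}$ and~$J \eqdef \{c+1, \dots, b\}$, these are precisely the yin arc~$\yinArc{I}{J}$ in the first case and the yang arc~$\yangArc{I}{J}$ in the second case.

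The last observation to make is that the up and down arcs appear in both descriptions: for instance, the up arc~$\upArc{[a,b]}$ is the yin arc obtained with $J = \{b\}$ and simultaneously the yang arc obtained with $I = \{a\}$, and symmetrically for the down arc. Consequently, summing over all yin and yang arcs as in the statement amounts to giving each up or down shard polytope multiplicity~$2$ and every other (genuinely yin-only or yang-only) shard polytope multiplicity~$1$. These are all strictly positive coefficients, so the theorem of~\cite{MR4584712} applies and certifies the resulting Minkowski sum as a realization of~$\SRP(n)$.

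There is no genuine obstacle in this argument: all the content is a routine combinatorial bookkeeping matching the definition of yin and yang arcs against the characterization of arcs with at most one line crossing, and the geometric conclusion is extracted directly from the Padrol-Pilaud-Ritter theorem. The more substantive work will come in the ensuing subsections, which must use this Minkowski decomposition to derive both the closed-form vertex coordinates of \cref{thm:strongRectangulotope}(i) and the submodular function describing the facet inequalities of \cref{thm:strongRectangulotope}(ii).
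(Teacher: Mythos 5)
Your proof is correct and follows essentially the same route the paper takes: invoke the Padrol--Pilaud--Ritter theorem that Minkowski sums of positively scaled shard polytopes over an arc ideal realize the corresponding quotientope, identify the arc ideal of~$\strongeq$ (arcs crossing the horizontal line at most once) with the set of yin and yang arcs, and note that the double-counting of up/down arcs only rescales some summands and so does not change the normal fan. The paper treats the proposition as an immediate consequence of the preceding discussion and does not write a separate proof, so your more explicit bookkeeping (sign changes in the $A$/$B$ sequence, the split index~$c$) spells out what the paper leaves implicit, but there is no difference in substance.
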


Since the combinatorial type of a Minkowski sum is invariant to translation of the summands, we can use the polytopes defined in \cref{lem:yinsp,lem:yangsp} as a definition of the yin and yang shard polytopes.
Also note that since the up and down shard polytopes are, up to a translation, both yin and yang, each up and down shard polytope is summed twice (hence all basic shard polytopes are summed four times) in the proposed realization.
This, again, does not change the combinatorial type of the polytope.

\cref{prop:strongMinkowski} gives us an alternative description of $\SRP(n)$ as the sum of two opposite quotientopes corresponding to the yin and yang congruences.


\subsection{Submodular functions of strong rectangulotopes}
\label{subsec:submodularStrongRectangulotopes}

From the results of \cref{subsec:quotientopes}, the submodular function corresponding to our realization of $\SRP(n)$ can be obtained as before by summing the functions defining the yin and yang shard polytopes in \cref{lem:yinsp,lem:yangsp}.
Note again that all rays of the braid fan are preserved in the fan of the strong rectangulotope, hence our submodular description of the strong rectangulotope is an irredundant facet description.

\begin{lemma}
  The strong rectangulotope $\SRP(n)$ is realized by the following submodular function:
  \[
  f_{\strongeq}(X) \eqdef \# \bigset{ I, J }{ I \not\subseteq [n] \ssm X \text{ and } J \not\subseteq X } +
  \#\bigset{ I, J }{ I \not\subseteq X \text{ and } J \not\subseteq [n] \ssm X },
  \]
  where $I, J$ are nonempty consecutive intervals of~$[n]$.
\end{lemma}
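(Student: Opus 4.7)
The plan is to imitate the derivation of $f_{\weakeq}$ in \cref{subsec:submodularWeakRectangulotopes}. By \cref{prop:strongMinkowski}, $\SRP(n)$ is realized as a Minkowski sum of the shard polytopes $\SP(\yinArc{I}{J})$ and $\SP(\yangArc{I}{J})$ over all pairs $(I,J)$ of consecutive nonempty intervals of $[n]$. Since the submodular function of a Minkowski sum of deformed permutahedra is the sum of the submodular functions of the summands, it suffices to compute $f_{\yinArc{I}{J}}$ and $f_{\yangArc{I}{J}}$ individually and then add them. As in the weak case, I would replace each yin/yang shard polytope by an untranslated representative, since translating a Minkowski summand only translates the sum and leaves its combinatorial type (and hence its normal fan) unchanged.

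For the yin shard polytope, I would use the description of \cref{lem:yinsp}, so that $\SP(\yinArc{I}{J}) = \conv\bigl(\{\b{0}\} \cup \{\b{e}_i - \b{e}_j : i \in I,\, j \in J\}\bigr)$. Its support value at the indicator vector $\b{1}_X$ is
\[
f_{\yinArc{I}{J}}(X) = \max\!\Bigl(0,\; \max_{i \in I,\, j \in J}\bigl(\b{1}_X(i) - \b{1}_X(j)\bigr)\Bigr),
\]
which equals $1$ exactly when $I \cap X \ne \varnothing$ and $J \ssm X \ne \varnothing$, that is, when $I \not\subseteq [n] \ssm X$ and $J \not\subseteq X$, and $0$ otherwise. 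Using \cref{lem:yangsp} with the analogous untranslated polytope $\conv\bigl(\{\b{0}\} \cup \{\b{e}_j - \b{e}_i\}\bigr)$ and swapping the roles of $I$ and $J$ in the same computation yields $f_{\yangArc{I}{J}}(X) = \llbracket I \not\subseteq X \text{ and } J \not\subseteq [n] \ssm X \rrbracket$.

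Summing these two indicators over all pairs $(I,J)$ of nonempty consecutive intervals of $[n]$ then recovers exactly the two counts appearing in the statement. There is no genuine obstacle here; the only subtle point, already addressed in \cref{subsec:submodularWeakRectangulotopes} for the down arcs, is that passing to untranslated summands changes the linear part of each $f$ but not the resulting quotientope up to translation, so the identity above is the correct submodular function of $\SRP(n)$ (up to a global translation that is absorbed into the hyperplane equation $\sum x_i = 0$ via the reasoning of \cref{subsec:submodularWeakRectangulotopes}).
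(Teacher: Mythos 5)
Your proof is correct and follows essentially the same route as the paper: use \cref{prop:strongMinkowski} to write $\SRP(n)$ as a Minkowski sum of the (untranslated representatives of the) yin and yang shard polytopes, observe that the submodular function is additive under Minkowski sum, and compute each summand's indicator by evaluating its support function on $\mathbf{1}_X$. The only difference is that you spell out the support-function maximization explicitly, whereas the paper simply states the two indicator formulas; both arguments are the same in substance.
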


\begin{proof}
  The submodular function defining the realization of $\WRP(n)$ is the sum of the submodular functions defining the yin and yang shard polytopes in \cref{lem:yinsp,lem:yangsp}.
  The shard polytope~$\SP(\yinArc{I}{J})$ of a yin arc $\yinArc{I}{J}$ is given by the submodular function
  \[
  f_{\yinArc{I}{J}}(X) \eqdef \llbracket I \not\subseteq [n] \ssm X \text{ and } J \not\subseteq X \rrbracket .
  \]
  (Recall that $\llbracket \varphi\rrbracket$ is equal to 1 if $\varphi$ holds, and~$0$ otherwise.)
  Similarly, the shard polytope~$\SP(\yangArc{I}{J})$ of a yang arc~$\yangArc{I}{J}$ is given by the submodular function
  \[
  f_{\yangArc{I}{J}}(X) \eqdef \llbracket I \not\subseteq X \text{ and } J \not\subseteq [n] \ssm X \rrbracket .
  \]
  Their sum
  \[
  f_{\strongeq}(X) \eqdef \sum_{I,J} f_{\yinArc{I}{J}}(X) + f_{\yangArc{I}{J}}(X)
  \]
  is therefore as claimed.
\end{proof}

This proves the second part of Theorem~\ref{thm:strongRectangulotope}.


\subsection{Loday coordinates of strong rectangulotopes}
\label{subsec:LodayStrongRectangulotopes}

As in \cref{subsec:LodayWeakRectangulotopes}, we need to understand which vertices of the translates of the yin and yang shard polytopes defined in \cref{lem:yinsp,lem:yangsp} are extremal for a direction given by a permutation.

\begin{lemma}
  \label{lem:yinminmax}
  Let $\sigma\in\f{S}_n$, let~$I$ and~$J$ be two consecutive intervals of~$[n]$, and let $i\eqdef \arg\max_{k\in I} \sigma^{-1}_k$ and $j \eqdef \arg\min_{k\in J} \sigma^{-1}_k$ (resp.~$i\eqdef \arg\min_{k\in I} \sigma^{-1}_k$ and $j \eqdef \arg\max_{k\in J} \sigma^{-1}_k$).
  The unique vertex of the yin (resp.~yang) shard polytope~$\SP(\yinArc{I}{J})$ (resp.~$\SP(\yangArc{I}{J})$) that is extreme with respect to the direction $\sigma^{-1}$ is $\b{e}_i-\b{e}_j$ (resp.~$\b{e}_j-\b{e}_i$) if $\sigma^{-1}_i > \sigma^{-1}_j$ (resp.~$\sigma^{-1}_i < \sigma^{-1}_j$), and $\b{0}$ otherwise.
\end{lemma}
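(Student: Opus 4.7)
The plan is to optimize the linear form $\langle \sigma^{-1}, \cdot \rangle$ directly over the explicit vertex sets of the yin and yang shard polytopes provided by Lemmas~\ref{lem:yinsp} and~\ref{lem:yangsp}. Since translations do not affect which vertex of a polytope is extremal in a given direction, I would work with the untranslated descriptions, namely $\conv\bigl(\{\b{0}\} \cup \set{\b{e}_k - \b{e}_\ell}{k \in I,\ \ell \in J}\bigr)$ for the yin case and $\conv\bigl(\{\b{0}\} \cup \set{\b{e}_\ell - \b{e}_k}{k \in I,\ \ell \in J}\bigr)$ for the yang case.

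For the yin case, the value of $\langle \sigma^{-1}, \cdot \rangle$ at the candidate vertex $\b{0}$ is $0$, while at $\b{e}_k - \b{e}_\ell$ it is $\sigma^{-1}_k - \sigma^{-1}_\ell$. Since the indices $k \in I$ and $\ell \in J$ range independently (the intervals being disjoint because $I$ and $J$ are consecutive), the maximization of $\sigma^{-1}_k - \sigma^{-1}_\ell$ decouples into maximizing $\sigma^{-1}_k$ over $k \in I$ and minimizing $\sigma^{-1}_\ell$ over $\ell \in J$. Because $\sigma^{-1}$ is a bijection, both extremizers are attained uniquely, giving $i = \arg\max_{k \in I} \sigma^{-1}_k$ and $j = \arg\min_{\ell \in J} \sigma^{-1}_\ell$, and the best nonzero candidate is $\b{e}_i - \b{e}_j$ with value $\sigma^{-1}_i - \sigma^{-1}_j$.

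It then remains to compare this value to $0$. As $\sigma^{-1}$ has pairwise distinct coordinates, the value $\sigma^{-1}_i - \sigma^{-1}_j$ cannot equal $0$, so exactly one of the two strict inequalities $\sigma^{-1}_i > \sigma^{-1}_j$ or $\sigma^{-1}_i < \sigma^{-1}_j$ holds. In the first case, $\b{e}_i - \b{e}_j$ is the unique extremal vertex, and in the second case, $\b{0}$ is the unique extremal vertex, matching the statement.

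The yang case is completely symmetric: one maximizes $\sigma^{-1}_\ell - \sigma^{-1}_k$ over $k \in I$ and $\ell \in J$, which forces $i = \arg\min_{k \in I} \sigma^{-1}_k$ and $j = \arg\max_{\ell \in J} \sigma^{-1}_\ell$, and the sign of $\sigma^{-1}_j - \sigma^{-1}_i$ decides between $\b{e}_j - \b{e}_i$ and $\b{0}$. There is essentially no obstacle here; the entire argument follows immediately from the explicit description of the vertices of the shard polytopes combined with the injectivity of $\sigma^{-1}$.
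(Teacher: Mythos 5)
Your proof is correct and is precisely the argument the paper leaves implicit: Lemma~\ref{lem:yinminmax} is stated without a proof, being treated as an immediate consequence of the explicit descriptions of the yin and yang shard polytopes in Lemmas~\ref{lem:yinsp} and~\ref{lem:yangsp}. Your decoupling of the optimization over $k \in I$ and $\ell \in J$, together with the strictness argument (using that $\sigma^{-1}$ is injective and that $I$, $J$ are disjoint so $i \ne j$), supplies exactly the missing steps; and your choice to work with the untranslated polytopes is not an omission but the paper's own convention, re-adopted in the paragraph just before the lemma.
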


Note again that by definition, this extremal vertex should only depend on the strong congruence class of $\sigma$.
This, again, gives us alternative characterizations of the yin congruence $\sim$, the yang congruence $\backsim$, and the strong rectangulation congruence $\strongeq$.
We have $\sigma \sim \tau$ if and only if for any two consecutive intervals $I$ and $J$ of $[n]$, 
$\arg\max_{k\in I} \sigma^{-1}_k = \arg\max_{k\in I} \tau^{-1}_k$ and 
$\arg\min_{k\in J} \sigma^{-1}_k = \arg\min_{k\in J} \tau^{-1}_k$ (that is, if $\sigma \weakeq \tau$),
and, if we let $i$ and $j$ be defined as in Lemma~\ref{lem:yinminmax} as the indices of the maximum in $I$ and the minimum in $J$,
$\sigma^{-1}_i > \sigma^{-1}_j \Leftrightarrow \tau^{-1}_i > \tau^{-1}_j$.
A similar statement holds for the yang congruence $\backsim$, by exchanging $\min$ and $\max$.
Finally, $\sigma\strongeq\tau$ if both sets of conditions hold.

\begin{lemma}
  \label{lem:yincount}
  Let $R$ be a strong rectangulation of size~$n$, let~$S$ and $T$ be its source and target trees, let~$([n],\prec_s^R)$ be its strong poset, and let~$\b{p}(R)$ be the vertex of $\SRP(n)$ corresponding to $R$.
  Given a pair $i,j\in [n]$ with $i<j$, the number of yin arcs $\alpha$ such that $\b{e}_i-\b{e}_j$ is the extremal vertex of~$\SP(\alpha)$ contributing to $\b{p}(R)$ is
  \[
    h^T_i \cdot cv^{T,S}_{i,j}\cdot h^S_j 
  \]
  if $i\succ_s^R j$, and $0$ otherwise, where
  \begin{itemize}
  \item $h^T_i$ denotes the number of leaves in the horizontal subtree of $i$ in~$T$,
  \item $cv^{T,S}_{i,j}$ denotes the number of common leaves of the vertical subtree of $i$ in $T$ and the vertical subtree of $j$ in $S$.
  \end{itemize}  
\end{lemma}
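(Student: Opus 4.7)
The plan is to fix a permutation $\sigma \in \mathcal{L}(\prec_s^R)$, which by \cref{lem:sextw} also extends the weak poset and the partial orders defined by $S$ and $T$. By \cref{lem:yinminmax}, the extremal vertex of $\SP(\yinArc{I}{J})$ in direction $\sigma^{-1}$ equals $\b{e}_i - \b{e}_j$ precisely when $i = \arg\max_{k\in I}\sigma^{-1}_k$, $j = \arg\min_{k\in J}\sigma^{-1}_k$, and $\sigma^{-1}_i > \sigma^{-1}_j$. The last inequality persists across the whole class $\mathcal{L}(\prec_s^R)$ exactly when $i \succ_s^R j$; when $i \not\succ_s^R j$ the extremal vertex is forced to be $\b{0}$ and the count vanishes, matching the statement.

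Assuming $i \succ_s^R j$, I parameterize each admissible yin arc by the triple $(\min(I), \max(I), \max(J))$, with $\min(J) = \max(I) + 1$ determined. The argmax condition at $i$ decouples into constraints on the elements left of $i$ (controlled by $\min(I)$) and right of $i$ (controlled by $\max(I)$), and the argmin condition at $j$ decouples similarly across $\max(I) = \min(J) - 1$ and $\max(J)$. For the two endpoint choices I would invoke the argument of \cref{lem:weakCoord} directly: the admissible values of $\min(I)$ biject with the leaves in the horizontal subtree of $i$ in $T$, yielding $h^T_i$ choices; the symmetric argument applied to $S$ (mirroring the down-arc variant discussed in \cref{subsec:LodayWeakRectangulotopes}) gives $h^S_j$ admissible values of $\max(J)$, indexed by the leaves in the horizontal subtree of $j$ in $S$.

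The main obstacle is the middle count, since $m \eqdef \max(I) \in [i, j-1]$ must simultaneously extend the argmax at $i$ rightward and the argmin at $j$ leftward. Applying the right-endpoint version of the argument of \cref{lem:weakCoord} to $T$, the condition $\sigma^{-1}_k \le \sigma^{-1}_i$ for $k \in [i+1, m]$ translates into the leaf of $T$ appearing between inorder positions $m$ and $m+1$ lying in the vertical subtree of $i$ in $T$; the symmetric argument applied to $S$ translates the condition $\sigma^{-1}_k \ge \sigma^{-1}_j$ for $k \in [m+1, j-1]$ into the corresponding leaf of $S$ lying in the vertical subtree of $j$ in $S$. Since a segment of $R$ separating consecutive inorder positions simultaneously contributes one leaf to $T$ and one leaf to $S$, and these two leaves are common leaves in the sense of \cref{subsec:strongRectangulotopes}, the two conditions amount to requiring that this single common leaf lie in both vertical subtrees, contributing $cv^{T,S}_{i,j}$. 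Multiplying the three independent counts yields $h^T_i \cdot cv^{T,S}_{i,j} \cdot h^S_j$, as claimed.
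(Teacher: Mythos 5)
Your proof follows the paper's approach closely: fix $\sigma \in \mathcal{L}(\prec_s^R)$, apply \cref{lem:yinminmax} to reduce the count to admissible yin arcs, observe that the contribution vanishes unless $i \succ_s^R j$, factor the count over the three free endpoints $(\min(I), \max(I), \max(J))$, and invoke the \cref{lem:weakCoord}-style argument for each factor. The paper makes exactly this decomposition: $h^T_i$ choices for $\min(I)$, a constraint on the coupled endpoints $\max(I)$ and $\min(J)$ resolved by common leaves giving $cv^{T,S}_{i,j}$, and $h^S_j$ choices for $\max(J)$.

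If anything, your treatment of the middle factor is a bit more explicit than the paper's: you spell out the two one-sided conditions on $m = \max(I)$ (the argmax extending rightward to the vertical subtree of $i$ in $T$, and the argmin extending leftward to the vertical subtree of $j$ in $S$) and then observe these meet in the common-leaf condition, whereas the paper states more tersely that "the selected leaves must correspond to the same pair of successive indices of $[n]$" and "these leaves are exactly the common leaves." Both arguments rely on the same implicit geometric fact that the leaves of $T$ and $S$ sitting at the same inorder gap that satisfy both subtree conditions are precisely the common-leaf pairs counted by $cv^{T,S}_{i,j}$; neither gives a fully explicit proof of that correspondence, so there is no gap in yours that is not equally present in the paper's. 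Overall: correct, same route, marginally more detailed.
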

\begin{proof}
  Let $\sigma$ be any permutation in~$\mathcal{L}(\prec_s^R)$.
  From \cref{lem:yinminmax}, for a pair $i<j \in [n]$, there exists a yin arc~$\alpha$ whose shard polytope $\SP(\alpha)$ contributes to $\b{e}_i-\b{e}_j$ if and only if $\sigma^{-1}_i > \sigma^{-1}_j$, hence if $i\succ_s^R j$.
  Provided this holds, the set of yin arcs $\alpha$ such that $\SP(\alpha)$ contributes to $\b{e}_i-\b{e}_j$ are defined by pairs of contiguous intervals $I,J$ of $[n]$ with $i\in I$, $j\in J$, such that $i$ is the index of the maximum of $\sigma^{-1}$ in $I$, and $j$ is the index of the maximum of $\sigma^{-1}$ in $J$.
  We claim that the number of choices of the three endpoints defining such a pair of intervals $I$ and $J$ is the given product.

  We first show that the number of choices for the left endpoint of $I$ is equal to the number $h^T_i$ of leaves in the horizontal subtree of $i$ in $T$.
  The proof of this follows the exact same lines as that of \cref{lem:weakCoord}.
  Each leaf determines a left endpoint for $I$, defined as the next node in an inorder traversal.
  Any such node $\ell$ either lies in the horizontal subtree of $i$, or is $i$ itself.
  Since from \cref{lem:sextw} the strong poset extends the order induced by $T$, we have that either $\ell=i$ or $\ell\prec_s^R i$, hence $\sigma^{-1}_\ell \leq \sigma^{-1}_i$.
  Therefore, each such choice of left endpoint for $I$ does not contradict the fact that $i$ is the index of the maximum of~$\sigma^{-1}$ in $I$.
  Now consider the leaf of $T$ that is immediately left of the leftmost leaf of the horizontal subtree of $i$.
  This leaf is associated with the node $\ell$ that comes next in inorder traversal, which must be an ancestor of $i$ in $T$.
  Therefore, we must have $\ell\succ_s^R i$, and $\sigma^{-1}_\ell > \sigma^{-1}_i$, and we cannot extend $I$ past this index on the left.
  This shows that there are exactly $h^T_i$ choices, and not more, for the choice of the left endpoint of $I$.

  The rest of the formula comes by symmetric arguments.
  First, the right endpoint of $I$ can only be chosen among the leaves of the vertical subtree of $T$ rooted at $i$.
  Then the same reasoning holds for the two endpoints of $J$, replacing $i$ by $j$ and $T$ by $S$.
  Finally, since the right endpoint of $I$ and the left endpoint of $J$ must be consecutive, the selected leaves must correspond to the same pair of successive indices of $[n]$.
  These leaves are exactly the common leaves defined previously.
\end{proof}

\cref{lem:yincount} essentially gives us the coefficient $\yin{w}^R_{i,j}$ in \cref{thm:strongRectangulotope}, except that the condition $\neg \; {}_i \!\!\! \raisebox{.03cm}{\verticalPattern} \!\!\! {}^j$ is replaced by $i\succ_s^R j$.
Recall that ${}_i \!\!\! \raisebox{.03cm}{\verticalPattern} \!\!\! {}^j$ denotes the situation where the rectangles $r_i$ and~$r_j$ of inorder labels $i$ and $j$ share a vertical segment, $r_i$ is on the left of $r_j$, and the bottom edge of $r_i$ is lower than the top edge of $r_j$.
Our aim is to provide a formula for the coordinates of the vertices of the strong rectangulotope that does not involve the computation of the strong poset or one of its linear extensions.
For that purpose, we prove that for every pair $i<j\in [n]$ such that~$i \not\succ_s^R j$, we have either ${}_i \!\!\! \raisebox{.03cm}{\verticalPattern} \!\!\! {}^j$ or $cv^{T,S}_{i,j} = 0$.
Hence, we obtain that the product~$h^T_i \cdot cv^{T,S}_{i,j}\cdot h^S_ j\cdot \llbracket \; \neg \; {}_i \!\!\! \raisebox{.03cm}{\verticalPattern} \!\!\! {}^j \; \rrbracket$ is always~$0$ when~$i \not\succ_s^R j$, so that adding it in \cref{thm:strongRectangulotope} does not harm the result.

\begin{lemma}
  \label{lem:whentosum}
  Let $R$ be a rectangulation of size~$n$, with source and target trees~$S$ and~$T$, and strong order~$\prec_s^R$.
  Given a pair $i,j\in [n]$ such that $i<j$ and $i\not\succ_s^R j$, we~have
  \[
    cv^{T,S}_{i,j} > 0 \quad \Longrightarrow \quad {}_i \!\!\! \raisebox{.03cm}{\verticalPattern} \!\!\! {}^j.
  \]
\end{lemma}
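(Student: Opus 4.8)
The plan is to turn the hypothesis $cv^{T,S}_{i,j}>0$ into a statement about a single segment of $R$, and then to upgrade that segment to the vertical segment required by ${}_i\,\verticalPattern\,{}^j$, using $i\not\succ_s^R j$ exactly at the two places where the argument could break. I would begin by fixing a common leaf pair witnessing $cv^{T,S}_{i,j}>0$: a leaf $\lambda$ of $T$ lying in the vertical subtree of $i$ and a leaf $\mu$ of $S$ lying in the vertical subtree of $j$, whose parent edges lie on one common segment $s$ of $R$; as in the proof of \cref{lem:yincount}, $\lambda$ and $\mu$ then sit at one common inorder position. The crucial auxiliary input is \cref{lem:sextw}: the vertical subtree of $i$ in $T$ is rooted at the vertical child $k$ of $i$, and $k\to i$ is a vertical edge of $T$, so $k\prec_w^R i$ and hence every node $m$ occurring in that subtree satisfies $m\preceq_w^R k\prec_w^R i$, i.e.\ $i\succ_s^R m$; symmetrically every node $m'$ occurring in the vertical subtree of $j$ in $S$ satisfies $m'\succ_s^R j$.

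The first step is to show that $s$ is vertical and that $\lambda,\mu$ are vertical leaves. Their parent edges are axis-parallel and lie on a common segment, so $\lambda$ and $\mu$ are of the same type. If they were horizontal leaves, then, a horizontal leaf lying in the horizontal subtree of its node, $\lambda$ would be the horizontal leaf of a node $m\ne i$ of the vertical subtree of $i$ in $T$ (so $i\succ_s^R m$) and $\mu$ the horizontal leaf of a node $m'\ne j$ of the vertical subtree of $j$ in $S$ (so $m'\succ_s^R j$); the parent edges would then be the top edge of $r_m$ and the bottom edge of $r_{m'}$ on the common horizontal segment $s$, placing $r_{m'}$ directly above $r_m$, and a short analysis of their horizontal overlap, using the adjacency order when the edges overlap and case (ii) of the special relation $\btri$ otherwise, would give a comparability between $m$ and $m'$ yielding the chain $i\succ_s^R m$, $\dots$, $m'\succ_s^R j$, hence $i\succ_s^R j$, contradicting the hypothesis. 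So $s$ is a vertical segment, and $\lambda,\mu$ are vertical leaves attached to nodes $m$ and $m'$, with parent edges the right side of $r_m$ (on $s$) and the left side of $r_{m'}$ (on $s$); matching positions forces $m'=m+1$, so $r_m$ lies to the left of $r_{m+1}$ along the vertical segment $s$.

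The second step promotes $(m,m+1)$ to $(i,j)$. First, $j$ cannot lie in the vertical subtree of $i$ in $T$, for otherwise $i\succ_s^R j$; since the nodes of that subtree form the inorder interval just after $i$, the node $m+1$ lying on its right boundary forces $m$ to be its largest node, hence to lie on the all-vertical spine of the subtree. Walking down this spine from $i$ and using that the vertex immediately below the bottom-right corner of $r_i$, and below each successive spine rectangle, is a $\dashv$-shaped junction, one checks that the right side of $r_m$ lies on the \emph{same} segment of $R$ as the right side of $r_i$; thus $s$ is the vertical segment carrying the right side of $r_i$. A symmetric argument on the $S$ side (the vertical subtree of $j$ in $S$ has an all-vertical spine along which left sides stay on the segment carrying the left side of $r_j$) shows that $s$ also carries the left side of $r_j$. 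Hence $r_i$ lies to the left of $r_j$ and their right/left sides lie on the common vertical segment $s$. It remains to check that the bottom edge of $r_i$ is below the top edge of $r_j$: if not, then by genericity (equality would produce a four-valent vertex) the bottom-right corner of $r_i$ would be strictly above the top-left corner of $r_j$, which is precisely the hypothesis of case (i) of the special relation $\btri$ for $r_j$ on the left and $r_i$ on the right, giving $j\btri i$ and hence $i\succ_s^R j$ — impossible. This establishes ${}_i\,\verticalPattern\,{}^j$.

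The hard part will be the interplay, in the first two steps, between the combinatorics of the subtrees (which nodes a given leaf sits below), \cref{lem:sextw}, and the two cases of the special relation $\btri$. In particular, one must use the exact inorder position of the common leaf, together with the fact that $\lambda$ (resp.\ $\mu$) being a \emph{leaf} forces its host node to have no vertical child, to rule out the configurations where $r_m$ and $r_{m+1}$ (or $r_m$ and $r_{m'}$ in the horizontal case) are adjacent with the ``wrong'' comparability — the only way the chain of $\succ_s^R$-relations could fail to close up.
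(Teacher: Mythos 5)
Your overall architecture (reduce to a common leaf pair, use \cref{lem:sextw} to place all nodes of the two vertical subtrees below $i$ and above $j$ in $\prec_s^R$, identify the segment via the spine, and finish by ruling out the bad vertical configuration with case (i) of $\btri$, exactly as the paper does in its last step) is reasonable, and your Step 2 is close to correct. But Step 1 — excluding the possibility that the common leaves are \emph{horizontal} leaves — has a genuine gap, and it sits exactly where you yourself admit ``the hard part'' lies. In that configuration the top edge of $r_m$ and the bottom edge of $r_{m'}$ lie on the same horizontal segment, so $r_m$ is below it and $r_{m'}$ above it. If the two edges overlap, adjacency gives $m \tri m'$, i.e.\ $m \prec_s^R m'$; if they do not overlap and $r_{m'}$ is the one on the right, neither $\tri$ nor case (ii) of $\btri$ relates $m$ and $m'$ directly, and any transitive relation again goes upward, $m \prec_s^R m'$. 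In both situations the only relations available, combined with $m \prec_s^R i$ and $j \prec_s^R m'$, point the wrong way: you cannot assemble the chain $j \prec_s^R m' \prec_s^R m \prec_s^R i$ that your contradiction requires. Only the sub-case where $r_{m'}$ is above and strictly to the \emph{left} of $r_m$ yields $m' \btri m$ and closes the chain. Such horizontal common pairs with the ``wrong'' comparability do occur in small rectangulations, so they must be excluded using the hypotheses by some other mechanism; deferring this is deferring the actual content of the lemma in that case. (A fix that stays inside your framework: the same-gap fact you already invoke from \cref{lem:yincount} forces $m' = m-1$ in the horizontal case, so either $m' = i$, or $m'$ lies simultaneously in the vertical subtree of $i$ in $T$ and in that of $j$ in $S$; either way $j \prec_s^R m' \prec_s^R i$ or $j \prec_s^R i$ directly, the desired contradiction — no geometry of $r_m$ versus $r_{m'}$ is needed.) A smaller under-justification of the same kind occurs in Step 2: to force $m$ to be the largest node of the vertical subtree of $i$ in $T$ you need $m+1$ to lie outside that subtree, which follows because $m+1=m'$ lies in the vertical subtree of $j$ in $S$ and membership in both would give $i \succ_s^R j$; the fact that $j$ itself is outside the subtree, which is what you state, does not suffice.

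For comparison, the paper avoids the case split on leaf type altogether. It first shows from $i \not\succ_s^R j$ that $s(r_j)$ cannot lie strictly below and to the left of $t(r_i)$ (a positive-slope segment between them would cross only upward/rightward walls and give $j \tri i$), and from $i<j$ that $r_j$ is below or to the right of $r_i$; together these force the right edge of $r_i$ to be weakly to the left of the left edge of $r_j$, so the only segment that can carry a leaf of both vertical subtrees is the vertical segment supporting those two edges. The finish (ruling out the configuration where the bottom edge of $r_i$ is above the top edge of $r_j$ via case (i) of $\btri$) coincides with yours.
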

\begin{proof}
  Consider the two rectangles $r_i$ and $r_j$ with inorder labels $i$ and $j$ in $R$.
  We first observe that the source vertex $s(r_j)$ is not contained in the interior of the lower left quadrant of origin~$t(r_i)$.
  Indeed, if we assume otherwise, then the line segment from $s(r_j)$ to $t(r_i)$ has positive slope, which implies that $j\tri i$, which in turn implies $j\prec_s^R i$, contradicting our hypothesis.
  We also observe that since $i < j$, by definition of the inorder on~$S$ and~$T$, the rectangle $r_j$ must be below or on the right of~$r_i$.
  Combining those two observations, we conclude that the right edge of $r_i$ cannot be on the right of the left edge of~$r_j$.

  Therefore, there can only be one segment which may contain a leaf to the vertical subtree of $i$ in $T$ and a leaf of the vertical subtree of $j$ in $S$, which is the vertical segment supporting the right edge of $r_i$ and the left edge of $r_j$.
  Let us assume that this segment is shared by those two edges.
  If the bottom edge of $r_i$ is higher than the top edge of $r_j$, we must have $j\btri i$, which implies $j\prec_s^R i$, contradicting our hypothesis again.
  Thus the bottom edge of $r_i$ is lower than the top edge of $r_j$, and ${}_i \!\!\! \raisebox{.03cm}{\verticalPattern} \!\!\! {}^j$, as claimed.
\end{proof}

\cref{lem:yincount} and \cref{lem:whentosum} together already provide a realization of the quotientopes of the yin congruence, the vertices of which are in bijection with rectangulations that avoid one of the two patterns defining diagonal rectangulations~\cite{ACFF24}, and with $2\underline{41}3$-avoiding permutations, as mentioned in \cref{subsec:yinYangArcs}.

We omit the proofs of the analogues of \cref{lem:yincount} and \cref{lem:whentosum} for the case of the yang arcs, which use completely symmetric arguments.
This concludes the proof of \cref{thm:strongRectangulotope}.


\addtocontents{toc}{ \vspace{.1cm} }
\bibliographystyle{alpha}
\bibliography{rectangulotopes}

\newcommand{\etalchar}[1]{$^{#1}$}
\begin{thebibliography}{ABBM{\etalchar{+}}13}

\bibitem[AA23]{MR4651496}
Marcelo Aguiar and Federico Ardila.
\newblock Hopf monoids and generalized permutahedra.
\newblock {\em Mem. Amer. Math. Soc.}, 289(1437):vi+119, 2023.

\bibitem[AB24]{AB24}
Andrei Asinowski and Cyril Banderier.
\newblock From geometry to generating functions: rectangulations and
  permutations.
\newblock Preprint,
  \href{http://arxiv.org/abs/2401.05558}{\texttt{arXiv:2401.05558}}, 2024.

\bibitem[ABBM{\etalchar{+}}13]{MR3084577}
Andrei Asinowski, Gill Barequet, Mireille Bousquet-M\'{e}lou, Toufik Mansour,
  and Ron~Y. Pinter.
\newblock Orders induced by segments in floorplans and (2-14-3,
  3-41-2)-avoiding permutations.
\newblock {\em Electron. J. Combin.}, 20(2):Paper 35, 43, 2013.

\bibitem[Abe30]{A30}
Michio Abe.
\newblock Covering the square by squares without overlapping.
\newblock {\em J. Japan Math. Phys.}, 4(4):359--366, 1930.
\newblock (in Japanese).

\bibitem[Abe32]{Abe32}
Michio Abe.
\newblock On the problem to cover simply and without gap the inside of a square
  with a finite number of squares which are all different from one another.
\newblock {\em Proc. Phys.-Math. Soc. Japan}, 14:385--387, 1932.

\bibitem[ABP06]{MR2233287}
Eyal Ackerman, Gill Barequet, and Ron~Y. Pinter.
\newblock A bijection between permutations and floorplans, and its
  applications.
\newblock {\em Discrete Appl. Math.}, 154(12):1674--1684, 2006.

\bibitem[ACEP20]{MR4064768}
Federico Ardila, Federico Castillo, Christopher Eur, and Alexander Postnikov.
\newblock Coxeter submodular functions and deformations of {C}oxeter
  permutahedra.
\newblock {\em Adv. Math.}, 365:107039, 36, 2020.

\bibitem[ACFF24]{ACFF24}
Andrei Asinowski, Jean Cardinal, Stefan Felsner, and {\'{E}}ric Fusy.
\newblock Combinatorics of rectangulations: Old and new bijections.
\newblock Preprint,
  \href{http://arxiv.org/abs/2402.01483}{\texttt{arXiv:2402.01483}}, 2024.

\bibitem[APR21]{MR4328906}
Doriann Albertin, Vincent Pilaud, and Julian Ritter.
\newblock Removahedral congruences versus permutree congruences.
\newblock {\em Electron. J. Combin.}, 28(4):Paper No. 4.8, 38, 2021.

\bibitem[BB05]{MR2133266}
Anders Bj\"{o}rner and Francesco Brenti.
\newblock {\em Combinatorics of {C}oxeter groups}, volume 231 of {\em Graduate
  Texts in Mathematics}.
\newblock Springer, New York, 2005.

\bibitem[BSST40]{MR0003040}
R.~L. Brooks, C.~A.~B. Smith, A.~H. Stone, and W.~T. Tutte.
\newblock The dissection of rectangles into squares.
\newblock {\em Duke Math. J.}, 7:312--340, 1940.

\bibitem[CFZ02]{MR1941227}
Fr\'{e}d\'{e}ric Chapoton, Sergey Fomin, and Andrei Zelevinsky.
\newblock Polytopal realizations of generalized associahedra.
\newblock {\em Canad. Math. Bull.}, 45(4):537--566, 2002.
\newblock Dedicated to Robert V. Moody.

\bibitem[CGHK78]{MR0491652}
F.~R.~K. Chung, R.~L. Graham, V.~E. Hoggatt, Jr., and M.~Kleiman.
\newblock The number of {B}axter permutations.
\newblock {\em J. Combin. Theory Ser. A}, 24(3):382--394, 1978.

\bibitem[CSS18]{MR3878132}
Jean Cardinal, Vera Sacrist\'{a}n, and Rodrigo~I. Silveira.
\newblock A note on flips in diagonal rectangulations.
\newblock {\em Discrete Math. Theor. Comput. Sci.}, 20(2):Paper No. 14, 22,
  2018.

\bibitem[CSZ15]{MR3437894}
Cesar Ceballos, Francisco Santos, and G\"{u}nter~M. Ziegler.
\newblock Many non-equivalent realizations of the associahedron.
\newblock {\em Combinatorica}, 35(5):513--551, 2015.

\bibitem[DG96]{MR1417289}
S.~Dulucq and O.~Guibert.
\newblock Stack words, standard tableaux and {B}axter permutations.
\newblock {\em Discrete Math.}, 157(1-3):91--106, 1996.
\newblock Proceedings of the 6th {C}onference on {F}ormal {P}ower {S}eries and
  {A}lgebraic {C}ombinatorics ({N}ew {B}runswick, {NJ}, 1994).

\bibitem[FFNO11]{MR2763051}
Stefan Felsner, \'{E}ric Fusy, Marc Noy, and David Orden.
\newblock Bijections for {B}axter families and related objects.
\newblock {\em J. Combin. Theory Ser. A}, 118(3):993--1020, 2011.

\bibitem[FIT09]{FIT09}
Ryo Fujimaki, Youhei Inoue, and Toshihiko Takahashi.
\newblock An asymptotic estimate of the numbers of rectangular drawings or
  floorplans.
\newblock In {\em International Symposium on Circuits and Systems {(ISCAS}
  2009), 24-17 May 2009, Taipei, Taiwan}, pages 856--859. {IEEE}, 2009.

\bibitem[FNS24]{MR4667583}
\'{E}ric Fusy, Erkan Narmanli, and Gilles Schaeffer.
\newblock On the enumeration of plane bipolar posets and transversal
  structures.
\newblock {\em European J. Combin.}, 116:Paper No. 103870, 28, 2024.

\bibitem[FT07]{FT07}
Ryo Fujimaki and Toshihiko Takahashi.
\newblock A surjective mapping from permutations to room-to-room floorplans.
\newblock {\em {IEICE} Trans. Fundam. Electron. Commun. Comput. Sci.},
  90-A(4):823--828, 2007.

\bibitem[Gir12]{MR2914637}
Samuele Giraudo.
\newblock Algebraic and combinatorial structures on pairs of twin binary trees.
\newblock {\em J. Algebra}, 360:115--157, 2012.

\bibitem[He14]{MR3192492}
Bryan~Dawei He.
\newblock A simple optimal binary representation of mosaic floorplans and
  {B}axter permutations.
\newblock {\em Theoret. Comput. Sci.}, 532:40--50, 2014.

\bibitem[HL07]{MR2321739}
Christophe Hohlweg and Carsten E. M.~C. Lange.
\newblock Realizations of the associahedron and cyclohedron.
\newblock {\em Discrete Comput. Geom.}, 37(4):517--543, 2007.

\bibitem[HNT05]{MR2142078}
F.~Hivert, J.-C. Novelli, and J.-Y. Thibon.
\newblock The algebra of binary search trees.
\newblock {\em Theoret. Comput. Sci.}, 339(1):129--165, 2005.

\bibitem[Hum90]{MR1066460}
James~E. Humphreys.
\newblock {\em Reflection groups and {C}oxeter groups}, volume~29 of {\em
  Cambridge Studies in Advanced Mathematics}.
\newblock Cambridge University Press, Cambridge, 1990.

\bibitem[ITF09]{ITF09}
Youhei Inoue, Toshihiko Takahashi, and Ryo Fujimaki.
\newblock Counting rectangular drawings or floorplans in polynomial time.
\newblock {\em {IEICE} Trans. Fundam. Electron. Commun. Comput. Sci.},
  92-A(4):1115--1120, 2009.

\bibitem[Lee89]{MR1022776}
Carl~W. Lee.
\newblock The associahedron and triangulations of the {$n$}-gon.
\newblock {\em European J. Combin.}, 10(6):551--560, 1989.

\bibitem[Lod04]{MR2108555}
Jean-Louis Loday.
\newblock Realization of the {S}tasheff polytope.
\newblock {\em Arch. Math. (Basel)}, 83(3):267--278, 2004.

\bibitem[LR98]{MR1654173}
Jean-Louis Loday and Mar\'{\i}a~O. Ronco.
\newblock Hopf algebra of the planar binary trees.
\newblock {\em Adv. Math.}, 139(2):293--309, 1998.

\bibitem[LR12]{MR2871762}
Shirley Law and Nathan Reading.
\newblock The {H}opf algebra of diagonal rectangulations.
\newblock {\em J. Combin. Theory Ser. A}, 119(3):788--824, 2012.

\bibitem[Mal79]{MR0555815}
C.~L. Mallows.
\newblock Baxter permutations rise again.
\newblock {\em J. Combin. Theory Ser. A}, 27(3):394--396, 1979.

\bibitem[Mee17]{MR3697823}
Emily Meehan.
\newblock {\em Posets and {H}opf {A}lgebras of {R}ectangulations}.
\newblock PhD thesis, North Carolina State University, 2017.

\bibitem[Mee19]{MR4014603}
Emily Meehan.
\newblock Baxter posets.
\newblock {\em Electron. J. Combin.}, 26(3):Paper No. 3.33, 24, 2019.

\bibitem[MHPS12]{MR3235205}
Folkert M\"{u}ller-Hoissen, Jean~Marcel Pallo, and Jim Stasheff, editors.
\newblock {\em Associahedra, {T}amari lattices and related structures}, volume
  299 of {\em Progress in Mathematics}.
\newblock Birkh\"{a}user/Springer, Basel, 2012.
\newblock Tamari memorial Festschrift.

\bibitem[MM23]{MR4598046}
Arturo Merino and Torsten M\"{u}tze.
\newblock Combinatorial generation via permutation languages. {III}.
  {R}ectangulations.
\newblock {\em Discrete Comput. Geom.}, 70(1):51--122, 2023.

\bibitem[{OEI}10]{OEIS}
The {O}n-{L}ine {E}ncyclopedia of {I}nteger {S}equences.
\newblock Published electronically at \url{http://oeis.org}, 2010.

\bibitem[Pil18]{Pilaud-brickAlgebra}
Vincent Pilaud.
\newblock Brick polytopes, lattice quotients, and {H}opf algebras.
\newblock {\em J. Combin. Theory Ser. A}, 155:418--457, 2018.

\bibitem[Pil22]{Pilaud-acyclicReorientationLattices}
Vincent Pilaud.
\newblock Acyclic reorientation lattices and their lattice quotients.
\newblock {\em S\'{e}m. Lothar. Combin.}, 86B:Art. 67, 12, 2022.

\bibitem[Pos09]{Postnikov}
Alexander Postnikov.
\newblock Permutohedra, associahedra, and beyond.
\newblock {\em Int. Math. Res. Not. IMRN}, 2009(6):1026--1106, 2009.

\bibitem[Pou14]{MR3197650}
Lionel Pournin.
\newblock The diameter of associahedra.
\newblock {\em Adv. Math.}, 259:13--42, 2014.

\bibitem[PP18]{MR3856522}
Vincent Pilaud and Viviane Pons.
\newblock Permutrees.
\newblock {\em Algebr. Comb.}, 1(2):173--224, 2018.

\bibitem[PPR23]{MR4584712}
Arnau Padrol, Vincent Pilaud, and Julian Ritter.
\newblock Shard polytopes.
\newblock {\em Int. Math. Res. Not. IMRN}, 2023(9):7686--7796, 2023.

\bibitem[PRW08]{MR2520477}
Alex Postnikov, Victor Reiner, and Lauren Williams.
\newblock Faces of generalized permutohedra.
\newblock {\em Doc. Math.}, 13:207--273, 2008.

\bibitem[PS12]{PilaudSantos-brickPolytopes}
Vincent Pilaud and Francisco Santos.
\newblock The brick polytope of a sorting network.
\newblock {\em European J. Combin.}, 33(4):632--662, 2012.

\bibitem[PS15]{PilaudStump-brickPolytopes}
Vincent Pilaud and Christian Stump.
\newblock Brick polytopes of spherical subword complexes and generalized
  associahedra.
\newblock {\em Adv. Math.}, 276:1--61, 2015.

\bibitem[PS19]{MR3964495}
Vincent Pilaud and Francisco Santos.
\newblock Quotientopes.
\newblock {\em Bull. Lond. Math. Soc.}, 51(3):406--420, 2019.

\bibitem[PSZ23]{PilaudSantosZiegler}
Vincent Pilaud, Francisco Santos, and G\"{u}nter~M. Ziegler.
\newblock Celebrating {L}oday's associahedron.
\newblock {\em Arch. Math. (Basel)}, 121(5-6):559--601, 2023.

\bibitem[Rea05]{MR2142177}
Nathan Reading.
\newblock Lattice congruences, fans and {H}opf algebras.
\newblock {\em J. Combin. Theory Ser. A}, 110(2):237--273, 2005.

\bibitem[Rea12]{MR2864445}
Nathan Reading.
\newblock Generic rectangulations.
\newblock {\em European J. Combin.}, 33(4):610--623, 2012.

\bibitem[Rea15]{Reading-arcDiagrams}
Nathan Reading.
\newblock Noncrossing arc diagrams and canonical join representations.
\newblock {\em SIAM J. Discrete Math.}, 29(2):736--750, 2015.

\bibitem[Rea16a]{MR3645056}
N.~Reading.
\newblock Finite {C}oxeter groups and the weak order.
\newblock In {\em Lattice theory: special topics and applications. {V}ol. 2},
  pages 489--561. Birkh\"{a}user/Springer, Cham, 2016.

\bibitem[Rea16b]{MR3645055}
N.~Reading.
\newblock Lattice theory of the poset of regions.
\newblock In {\em Lattice theory: special topics and applications. {V}ol. 2},
  pages 399--487. Birkh\"{a}user/Springer, Cham, 2016.

\bibitem[SC03]{SC03}
Zion~Cien Shen and Chris C.~N. Chu.
\newblock Bounds on the number of slicing, mosaic, and general floorplans.
\newblock {\em {IEEE} Trans. Comput. Aided Des. Integr. Circuits Syst.},
  22(10):1354--1361, 2003.

\bibitem[Spr39]{MR0000470}
R.~Sprague.
\newblock Beispiel einer {Z}erlegung des {Q}uadrats in lauter verschiedene
  {Q}uadrate.
\newblock {\em Math. Z.}, 45:607--608, 1939.

\bibitem[SS93]{ShniderSternberg}
Steven Shnider and Shlomo Sternberg.
\newblock {\em Quantum groups}, volume~II of {\em Graduate Texts in
  Mathematical Physics}.
\newblock International Press, Cambridge, MA, 1993.
\newblock From coalgebras to Drinfeld algebras, A guided tour.

\bibitem[Sta63]{S63}
James~Dillon Stasheff.
\newblock Homotopy associativity of {$H$}-spaces. {I}, {II}.
\newblock {\em Trans. Amer. Math. Soc.}, 108:293--312, 1963.
\newblock {\bf 108} (1963), 275-292; ibid.

\bibitem[STT88]{MR928904}
Daniel~D. Sleator, Robert~E. Tarjan, and William~P. Thurston.
\newblock Rotation distance, triangulations, and hyperbolic geometry.
\newblock {\em J. Amer. Math. Soc.}, 1(3):647--681, 1988.

\bibitem[Tam51]{T51}
Dov Tamari.
\newblock {\em Mono\"{\i}des pr\'{e}ordonn\'{e}s et cha\^{i}nes de {M}alcev}.
\newblock Universit\'{e} de Paris, Paris, 1951.
\newblock Th\`ese.

\bibitem[Tam62]{MR0146227}
Dov Tamari.
\newblock The algebra of bracketings and their enumeration.
\newblock {\em Nieuw Arch. Wisk. (3)}, 10:131--146, 1962.

\bibitem[TF08]{TF08}
Toshihiko Takahashi and Ryo Fujimaki.
\newblock Fujimaki--{T}akahashi squeeze: Linear time construction of constraint
  graphs of floorplan for a given permutation.
\newblock {\em {IEICE} Trans. Fundam. Electron. Commun. Comput. Sci.},
  91-A(4):1071--1076, 2008.

\end{thebibliography}
\label{sec:biblio}

\end{document}